\author{Thomas Hudson}
\title{An existence result for Discrete Dislocation Dynamics in three dimensions}
\keywords{Discrete Dislocation Dynamics, crystal plasticity, evolution problems}
\subjclass{35Q74, 74N05, 37N15}
\newcommand*{\smfrac}[2]{{\textstyle \frac{#1}{#2}}}
\def\AXint#1#2#3{{\setbox0=\hbox{$#1{#2#3}{\int}$}
\vcenter{\hbox{$#2#3$}}\kern-.5\wd0}}
\def\<{\langle}
\def\>{\rangle}
\def\R{\mathbb{R}}
\def\N{\mathbb{N}}
\def\Z{\mathbb{Z}}
\def\Sph{\mathbb{S}}
\def\CC{{\rm C}}
\def\HH{{\rm H}}
\def\dd{{\rm d}}
\def\dx{\,\dd x}
\def\dt{\,\dd t}
\def\to{\rightarrow}
\def\sep{\,|\,}
\def\bsep{\,\b|\,}
\def\Bsep{\,\B|\,}
\newcommand{\Rstr}{%
  \hspace{.25ex}\raisebox{-.3ex}{\scalebox{1.5}{\reflectbox{\rotatebox[origin=br]{-90}{$\lnot$}}}\hspace{-.3ex}}\,%
}
\DeclareMathOperator*{\argmin}{\mathrm{argmin}}
\def\BLO{\mathcal{L}}
\def\eps{\varepsilon}
\def\del{\delta}
\def\b{\big}
\def\B{\Big}
\def\bg{\bigg}
\def\Bg{\Bigg}
\def\Mass{\mathbf{M}}
\def\MRatio{\boldsymbol{\Theta}}
\def\Ione{\mathscr{I}_1}
\def\Itwo{\mathscr{I}_2}
\def\Id{\mathsf{I}}
\def\Alt{\mathsf{A}}
\def\Dsf{\mathsf{D}}
\def\GMat{\mathsf{G}}
\def\Jker{\mathsf{J}^\eps}
\def\Kker{\mathsf{K}^\eps}
\def\Elas{\mathsf{C}}
\def\Proj{\mathsf{P}}
\def\id{\mathrm{id}}
\def\PK{f^{\mathrm{PK}}}
\def\Adm{\mathscr{A}}
\def\Haus{\mathcal{H}}
\def\div{\mathrm{div}}
\def\supp{\mathrm{supp}}
\def\E{\mathcal{E}}
\def\I{\mathcal{I}}
\def\Latt{\mathscr{L}}
\begin{document}
\begin{abstract}
  We present a mathematical framework within which Discrete Dislocation Dynamics in three
  dimensions is well--posed. By considering smooth distributions of slip, we derive a
  regularised energy for curved dislocations, and rigorously
  derive the Peach--Koehler force on the dislocation network via an inner variation.
  We propose a dissipative evolution law which is cast as a generalised gradient flow,
  and using a discrete--in--time approximation scheme,
  existence and regularity results are obtained for the evolution, up until the first time at
  which an infinite density of dislocation lines forms.
\end{abstract}

\maketitle


\section{Introduction}\label{sec:introduction}
In crystalline materials, plastic behaviour is characterised by the generation of slip,
which is the process by which the planes of the material's lattice structure are reordered.
As the material deforms, slip is propagated via the motion of dislocations, which are
topological line defects found in regions where the lattice mismatch required for slip to occur is
most concentrated \cite{HL82,HB11}.
From the very beginnings of the study of dislocations \cite{Orowan1934,Polanyi1934,T34},
linear continuum theories have been used to
model these defects with great success \cite{V07,B39a,B39b,PK50,Blin55,M63,BBS80}, despite the
fact that unphysical singularities are induced in the stress, strain
and energy density fields at the dislocation lines.
These singularities are a signature of the breakdown of
the assumption that the material behaves as a continuum close to dislocation lines, and 
although no continuum theory is able to accurately capture the properties of dislocation cores,
continuum approaches have nevertheless been highly successful at capturing bulk behaviour.
Indeed, in a
series of recent mathematical works it has been rigorously demonstrated
that linear elastoplasticity theory provides an excellent
prediction of the strain caused by defects on the atomistic scale \cite{HO14,HO15,EOS16,H17,BBO17}.

Since dislocation motion determines the plastic behaviour of crystalline materials, one of the
principal aims of studying these objects is to understand the physical laws which govern their
microscopic motion, and consequently, to obtain an accurate description of the evolution of
crystal plasticity on a macroscopic scale. To that end, two broad approaches to
modelling dislocation motion have developed. \emph{Phase field} models consider
a continuous distribution of dislocations, an approach which has its roots in the
classic \emph{Peierls--Nabarro model} \cite{P40,N52}; modern examples of
such modelling approaches include \cite{GB99,KCO02,ZY15}. A variety of 
mathematical results concerning models in this class have been obtained, including well--posedness
\cite{BCLM08,CHMR10}, long--time asymptotics \cite{PV15,PV16,PV17}, and homogenization
results \cite{GM05,GM06,CGM11,MP12a,MP12b}. While these models have good mathematical structure,
they are typically limited to considering only one family of slip planes at once; moreover,
simulating phase field models on a microscopic scale is computationally
intensive, since a high resolution mesh is required to accurately resolve the level sets
corresponding to individual dislocations.

The second approach is \emph{Discrete Dislocation Dynamics} (DDD), in which
dislocations lines are described as curves within the crystal, and are driven by the action of
the \emph{Peach--Koehler force} \cite{PK50}.
The fact that the dislocations alone are tracked in this approach has the advantage of
drastically reducing the computational complexity in comparison with phase field approaches, and
as such, DDD has been used as a simulation technique for studying plasticity since the early 1990s
\cite{GSLL89,AG90a,AG90b,VdGN95,BC06,ACetal07,DMMQGK11}.
While a significant mathematical literature has developed which considers
one-- and two--dimensional DDD models for the motion of straight dislocations
\cite{CG99,CL05,ADLGP14,vMM14,BFLM15,ADLGP17,BvMM17,BM17,H17,HM17}, few mathematical results
concerning DDD in a three--dimensional setting exist to date, and in part, this appears to be due
to the lack of a clear mathematical statement of what the evolution problem for DDD should be in
this setting.

This paper therefore seeks to bring together many of the
ideas already present in the literature in laying out a well--posed mathematical formulation of DDD
which is both general enough to encompass evolution problems similar to those considered by
Materials Scientists and Engineers simulating DDD in practice, and mathematically concrete enough
to allow the development of further results concerning dislocations in three dimensions.
In particular, we hope to open paths towards a deeper mathematical understanding of
the numerical schemes used to simulate DDD in practice.

\subsection{A regularised theory of DDD in three dimensions}
\label{sec:regul-theory-ddd}
As mentioned above, we seek to develop a well--posed mathematical theory of DDD in three
dimensions. In order to be physically--relevant, practical for computation and amenable to
mathematical analysis, we make several requirements of this theory:
\begin{enumerate}
\item Dislocations should be curved and satisfy the physically--necessary condition that they
  are the boundaries of regions of slip.
\item The stress, strain and internal energy density induced in the material by the presence of
  dislocations are required to be non--singular.
\item The energy of and Peach--Koehler force on a configuration of dislocations should take on
  explicit expressions in terms of an integral kernel which are computable with a quantifiable
  error.
\item The underlying material is assumed to be linearly elastic, but need not be isotropic.
\item Dislocation motion should dissipate internal energy.
\end{enumerate}
The first of these conditions is a kinematic requirement for a theory of dislocations to make
sense. In common with several recent mathematical works \cite{CGM15,CGO15,SvG16a,SvG16b},
this condition is encoded by describing dislocations as \emph{closed 1--currents}, which may be
viewed closed oriented Lipschitz curves satisfying certain topological constraints.

In order to satisfy the second condition, we construct a regularised version
of the classical linear theory via a similar approach to that used in \cite{CAWB06,CGO15}:
Starting from a regularised distribution of slip and using the ideas of Mura \cite{M91},
we derive an expression for the internal energy as a double integral which depends on the boundary
of the slip surfaces alone. As this expression depends only on the boundaries
of the surfaces over which slip has occurred, which correspond exactly to the dislocations in the
material, this energy is furthermore consistent with the first requirement made above.
Using the Fourier analytic ideas of \cite{BBS80}, we study the energy, providing a computable
expression for the integral kernel without requiring an
explicit expression of the elastic Green's function, which renders the theory broad enough to
satisfy both the third and fourth conditions.
By performing an inner variation of the energy with respect to the positions of dislocations,
we are able to rigorously derive an expression for the Peach--Koehler force.

To fulfil the final condition, we choose to formulate an evolution law for DDD as a generalised
gradient flow \cite{AGS} by prescribing a dissipation potential expressed in terms of the velocity
field perpendicular to the dislocation line.
This framework enables us to prove our main result, that the evolution problem is well--posed.

\subsection{Outline}

\S\ref{sec:notation-conventions} provides a record of the notation used throughout the paper for
the reader's convenience, and \S\ref{sec:main-results} is devoted to an exposition of the main
results of the paper, which are Theorem~\ref{th:Energy}, providing the properties of the regularised energy; Theorem~\ref{th:Force}, which provides properties of the
configurational force on dislocations; and Theorem~\ref{th:WellPosedness}, which asserts
the well--posedness of the model for DDD considered here.

The subsequent sections of the paper are then devoted to proving these results.
\S\ref{sec:elast-energy-disl} describes the regularisation procedure
applied to dislocations and derivation of the explicit representation of the energy due to
dislocations; \S\ref{sec:deform-disl-peach} derives the Peach--Koehler
(or configurational) force of a dislocation by inner variation; and 
\S\ref{sec:evol-probl-exist} prove existence, uniqueness, and regularity results for the evolution.

As mentioned above, we consider configurations of slip and dislocations as
integral currents \cite{deRham,FF60,Federer,Morgan}, since these are the correct
mathematical objects to describe the topological restrictions on dislocations
\cite{AO05,CGM15,CGO15,SvG16a,SvG16b}. Currents generalise the notion of
distributions \cite{Freidlander} to a geometric setting, and while the theory of these objects
can be forbidding, in our setting, the reader should always have in mind surfaces and curves.
For convenience, Appendix~\ref{appendix} recalls the definitions and basic theory related to these
objects that is used here.

\subsection{Notation}
\label{sec:notation-conventions}
The following notational conventions will be used throughout
the paper.

\subsubsection{Tensors}
\begin{itemize}
\item Important tensors which are fixed throughout (rather than variables) are generally denoted using sans
  serif fonts, e.g. $\Kker$.
\item Subscript indices always refer to components in Cartesian coordinates, e.g. $f_{abc}$.
\item Subscript indices appearing after a comma denote partial derivatives: e.g.
  $f_{i,j} = \smfrac{\partial f_i}{\partial x_j}$.
\item The Einstein summation convention is used throughout, so repeated indices within an expression are always
  summed, e.g. $a_{ijik}b_k = \sum_{i,k=1}^3 a_{ijik}b_k$.
\item Tensor products of vectors are denoted $a\otimes b$.
\item $\wedge$ denotes the usual alternating product acting on vectors and covectors (for further
  details, see \S\ref{sec:vectors-covectors}).
\item The dot products between vectors in $\R^3$ is denoted $v\cdot\tau$.
\item $\Id\in\R^{3\times3}$ always denotes the identity matrix.
\item $\Alt\in\R^{3\times3\times3}$ denotes the alternating tensor, which satisfies
  \begin{equation*}
    \Alt_{ijk} = \begin{cases}
      +1 & (ijk)\text{ is an even permutation of }(123)\\
      -1 & (ijk)\text{ is an odd permutation of }(123),\\
      0  & \text{otherwise.}
    \end{cases}
  \end{equation*}
\item $\Elas\in\R^{3\times3\times3\times3}$ denotes an elasticity tensor, which satisfies the
  \emph{major symmetry} $\Elas_{abcd}=\Elas_{cdab}$, the \emph{minor symmetries}
  $\Elas_{abcd} = \Elas_{bacd}=\Elas_{abdc}$, and a \emph{Legendre--Hadamard condition}, i.e. there exists
  $c_0>0$ such that
  \begin{equation*}
     \Elas_{abcd}v_ak_bv_ck_d \geq c_0 |v|^2|k|^2\quad\text{for all }v,k\in\R^3.
  \end{equation*}
\item $\GMat:\R^3\to\R^{3\times3}$ denotes the \emph{elastic Green's function}, i.e. the fundamental
  solution of the differential operator $-\Elas_{abcd}u_{c,db}$, which solves
  \begin{equation}\label{eq:GMatEqn}
    -\Elas_{\alpha jkl}\GMat_{\beta k,lj} = \Id_{\alpha\beta}\delta_0
  \end{equation}
  in the sense of distributions.
\item $\GMat^\eps:=\GMat*\varphi^\eps$ denotes a regularised version of the elastic Green's function,
  solving
  \begin{equation}\label{eq:GMatEpsEqn}
    -\Elas_{\alpha jkl}\GMat^\eps_{\beta k,lj} = \Id_{\alpha\beta}\varphi^\eps
  \end{equation}
  in the sense of distributions, where $\varphi^\eps$ is a smooth, positive, radially symmetric
  function satisfying $\int_{\R^3}\varphi^\eps\dx = 1$.
\end{itemize}

\subsubsection{Sets, currents, measures and integration}
\begin{itemize}
\item $\overline{B_r(s)}$ denotes the closed ball of radius $r$, centred at $s\in\R^3$.
\item $\Sigma$ will denote a $2$--rectifiable subset of $\R^3$ (i.e. a
  generalised surface) and $\Gamma$ will denote a $1$--rectifiable subset of $\R^3$
  (i.e. a generalised curve).
\item $T$ and $S$ will denote currents, and $\partial$ is the usual boundary
  operator.
\item $\Ione(\R^3;\Latt)$ and $\Itwo(\R^3;\Latt)$ denote space of integral 1-- and 2--currents
  with multiplicities in a given lattice $\Latt\subset\R^3$.
\item $S\Rstr A$ denotes the restriction of a current $S$ to $A$.
\item $F_\#S$ denotes the pushforward of the current $S$ by $F$, i.e. the current corresponding
  to the image $F(S)$.
\item $\Mass(S)$ denotes the mass of a current $S$, defined in \eqref{eq:Mass}.
\item $\MRatio(S)$ denotes the maximal mass ratio of a current $S$, defined in \eqref{eq:MassRatio}.
\item $\Haus^m$ denotes the $m$--dimensional Hausdorff measure on $\R^3$.
\item $\int_A f(t) \dd\mu(t)$ denotes the Lebesgue integral of a Borel measurable function
  $f$ with respect to the measure $\mu$ restricted to a Borel--measurable set $A$.
\end{itemize}
For some additional details on the basic theory of currents, see Appendix~\ref{appendix}.

\subsubsection{Functions and function spaces}  
\begin{itemize}
\item Lebesgue, Sobolev and H\"older spaces are all given standard
  notation, i.e. $L^p$, $H^k$, $C^k$, as are their norms.
\item The $C^{0,\gamma}$ H\"older seminorm is denoted $[F]_\gamma$.
\item The space of smooth $m$--forms is denoted $\mathscr{D}^m(\R^3)$ (see \ref{sec:forms} for a
  full definition).
\item We set $\dot{H}^1(\R^3) := \{\nabla u\in L^2(\R^3)\}$, which is equipped with the seminorm
  $u\mapsto \|\nabla u\|_{L^2(\R^3)}$.
\item The space of bounded linear operators mapping a Banach space $X$ to a Banach space $Y$ is
  denotes $\BLO(X,Y)$.
\item The $m$th Frechet derivative of a function $f$ at a point $x$ is denoted $D^mf$, and its (multilinear)
  action on vector $v_1,\ldots,v_m\in\R^3$ is denoted $D^mf(x)[v_1,\ldots,v_m]$.
\item The pullback of a function $G$ by $F$ is denoted $F^\#G$ (see
  \S\ref{sec:pushforward-pullback} for a full definition).
\item The identity mapping on $\R^3$ is denoted $\id$.
\item The Fourier transform is of a function $f$ is denoted $\widehat{f}$, and (for $f\in L^1(\R^3)$)
  we use the definition
  \begin{equation*}
  \widehat{f}(k) := \int_{\R^3}f(x)\mathrm{e}^{-i(k,x)}\dx,\quad\text{so that}
  \quad f(x) = \frac{1}{(2\pi)^3}\int_{\R^3}\widehat{f}(k)\mathrm{e}^{i(k,x)}\dd k.
\end{equation*}
\item For numbers, a bar denotes complex conjugation, i.e. if $z=x+iy$ with $x,y\in\R$, then
  $\bar{z}=x-iy$.
\item The duality relation between a vector space and its dual is denoted with angular brackets,
  e.g. $\<T,\phi\>$, $\<e^*_i,e_j\>$.
\item Inner products are denoted with parentheses, e.g. $(u,v)$.
\end{itemize}

\section{Main results}
\label{sec:main-results}

\subsection{Modelling slip and dislocations as currents}
\label{sec:disl-conf-slip}
Dislocations are usually modelled as being described by \cite{HL82,HB11}
\begin{itemize}
\item their position, a curve $\Gamma\subset\R^3$,
\item their orientation, fixed by defining a tangent field $\tau:\Gamma\to\Lambda_1\R^3$, and
\item their topological `charge', known as the Burgers vector $b\in\R^3$.
\end{itemize}
Very often, dislocations are presented as simply being described by the quantities described above,
but there is a further topological restriction on possible dislocation configurations, which
arises since dislocations must always be the boundary of a region of \emph{slip}
\cite{AO05,CGM15}. In crystal plasticity, the term slip refers to a displacement across
a surface inside a crystal such that the lattice matches perfectly on either side of it:
as a consequence, slip is characterised by a lattice vector.



To illustrate the action of slip, suppose $\mathsf{B}\in\R^{3\times3}$ is an invertible matrix,
define a fixed lattice $\Latt:=\mathsf{B}\Z^3\subset\R^3$ which describes
the structure of the material considered which satisfies the property that the shortest
non--zero lattice vector is of length 1, i.e. $\min\{|b|:b\in\Latt\setminus\{0\}\}=1$. Suppose also
that $\Sigma\subset\R^3$ is a compactly--supported oriented surface
with a normal field $\nu$ across which a slip has occurred.
The \emph{plastic distortion} corresponding to a \emph{slip vector}
$b\in\Latt$ across $\Sigma$ is then the strain field
\begin{equation}\label{eq:PlasticDef}
  z = b\otimes \nu\,\Haus^2\Rstr\Sigma,
\end{equation}
where $\Haus^2\Rstr\Sigma$  is the 2--dimensional Hausdorff measure restricted to $\Sigma$.

\begin{figure}[t]
  \centering
  \includegraphics[height=8cm]{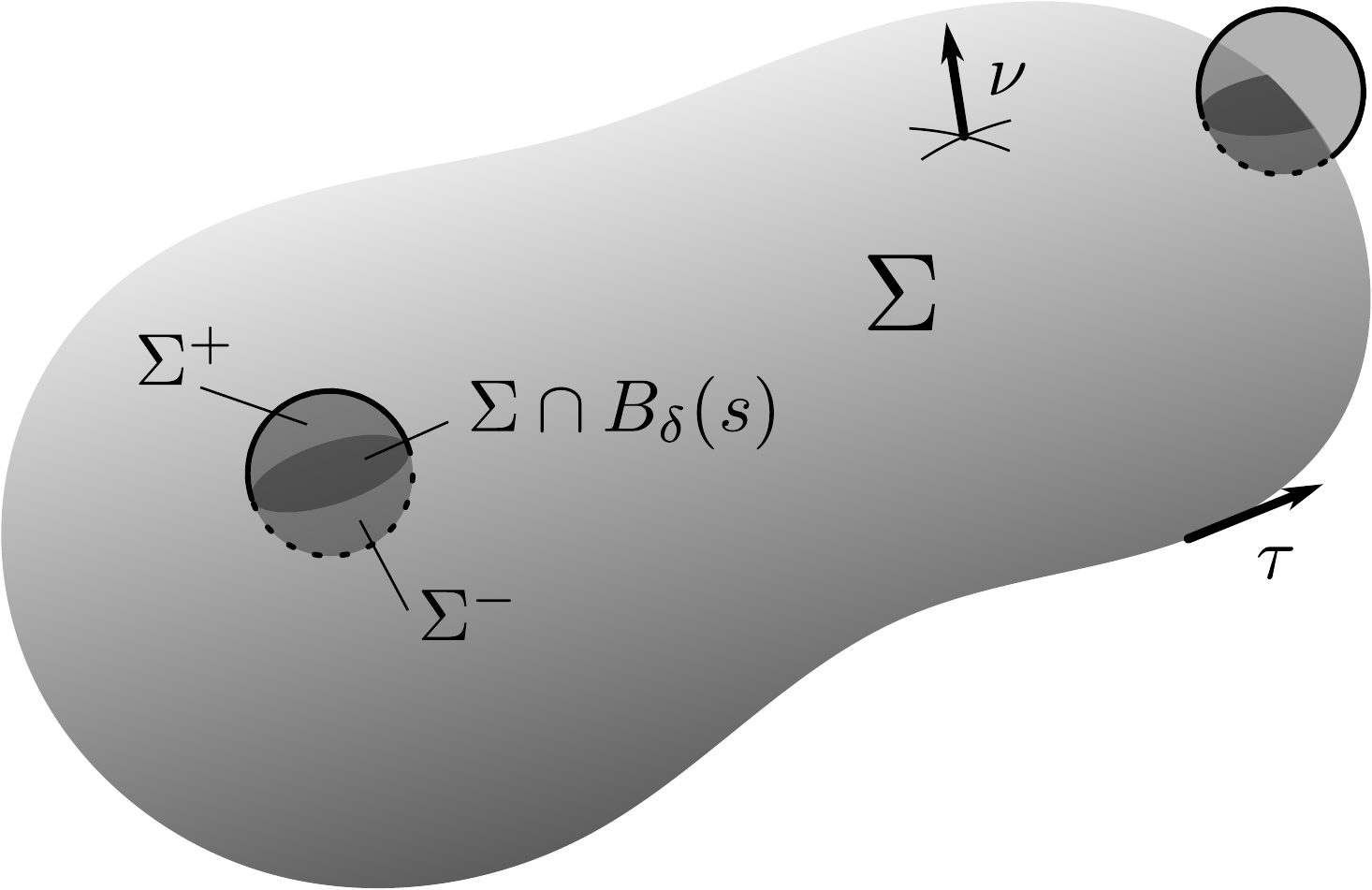}
  \caption{An illustration of the surface $\Sigma$ discussed in \S\ref{sec:smooth-distr-slip}.}
  \label{fig:Sigma}
\end{figure}

This description of $z$ as a plastic distortion is motivated by the following remark, which is
illustrated in Figure~\ref{fig:Sigma}:
suppose that $s\in\R^3\setminus\partial\Sigma$,
and $\delta>0$ is taken such that $B_\delta(s)$ is disjoint from $\partial\Sigma$. Assuming that
$\Sigma\cap B_\delta(s)$ is sufficiently regular, we may partition
$B_\delta(s)=\b(\Sigma\cap B_\delta(s)\b)\cup\Sigma^+\cup\Sigma^-$,
where $\Sigma^+$ and $\Sigma^-$ are disjoint open sets.
Define the BV vector field $u^0:B_\delta(s)\to\R^3$ such that
\begin{equation*}
  u^0(x) = 
  \begin{cases}
    b & x\in\Sigma^+,\\
    0 & x\in\Sigma^-,
  \end{cases}
  \qquad\text{for which}\qquad Du^0 = b\otimes\nu\,\mathcal{H}^2\Rstr\b(\Sigma\cap B_\delta(s)\b).
\end{equation*}
The function $u^0$ represents a jump in the displacement of $b$ across the surface
$\Sigma\cap B_\delta(s)$, and we can perform this construction for any $s\notin \partial\Sigma$;
however, the same construction fails for $s\in\partial\Sigma$, which indicates that $z$ has
non--trivial distributional curl concentrated on $\partial\Sigma$; indeed, the fact that
$z$ therefore cannot be globally represented as a gradient is precisely the reason that
$z$ is a plastic distortion.

Since the slip $z$ is concentrated on a two--dimensional set $\Sigma$, following \cite{CGM15,CGO15},
it is natural to define an associated vector--valued integral 2--current $T$, for which
\begin{equation*}
  \<T,\phi\> := \int_\Sigma \<\nu,\phi\> b\,\dd\Haus^2
  \qquad\text{for any }\phi\in\mathscr{D}^2(\R^3).
\end{equation*}
The space of such 2--currents will be denoted $\Itwo(\R^3;\Latt)$, and can be endowed with an
additive structure, which arises by taking the union of the support, and the sum of the
corresponding fields $b$.

If $T$ is an integral current, then its boundary $\partial T$ is also an integral current, and
has a consistently oriented tangent field $\tau:\partial\Sigma\to \R^3$; the equivalent of Stokes
Theorem then implies that
\begin{equation*}
  \int_{\partial\Sigma} \<\tau,\phi\>b\,\dd\Haus^1=\<\partial T,\phi\> = \< T,d\phi\>  = \int_\Sigma \<\nu,d\phi\>b\,\dd\Haus^2
  \qquad\text{for all }\phi\in\mathscr{D}^1(\R^3).
\end{equation*}
The integral 1--current $\partial T$ encodes a configuration of dislocations supported on
$\Gamma=\partial\Sigma$, with \emph{Burgers vector} $b:\Gamma\to\Latt$, and \emph{line direction}
fixed by the tangent field $\tau$. In analogue with the previous case, the space of all 1--currents
taking values in $\Latt$ is denoted $\Ione(\R^3;\Latt)$; we note that Theorem~2.5 of \cite{CGM15}
describes the structure of $\Ione(\R^3;\Latt)$.

We call the vector--valued current $T$ a \emph{slip configuration},
and $S=\partial T$ the corresponding \emph{dislocation configuration}. It is clear
that there are many possible slip configurations corresponding to the same dislocation
configuration, since there are many surfaces with the same boundary.
The space of admissible dislocation configurations is then defined to be
\begin{multline*}
  \Adm:= \B\{ S\in \Ione(\R^3;\Latt)\Bsep S = \partial T\text{ for some }T\in\Itwo(\R^3;\Latt)\text{ s.t. }\\\<T,\phi\> = \int_\Sigma \<\nu, \phi\>\,b\,\dd\Haus^2\text{ for all }\phi\in\mathscr{D}^2(\R^3)\B\}.
\end{multline*}
It is straightforward to check that $\Adm$ is an additive subspace of $\Ione(\R^3;\Latt)$, since
$\partial^2 T=0$ for any $T\in\Itwo(\R^3;\Latt)$.

The choice to require that each $S\in\Adm$ is the boundary
of a 2--current naturally encodes the fact that dislocation must be the boundary of a region of
slip, and this requirement is equivalent to that dislocation configurations satisfy a
`divergence--free' condition, as discussed in \cite{CGM15}, which is in turn equivalent to
the principle of the conservation of Burgers vector (see \cite{N52,HB11,HL82}). The additive
structure of integral currents allows for the description of complicated
configurations of dislocations via a superposition of corresponding elementary
currents.

We now define two useful functions which quantify aspects of the geometry of dislocation
configurations. The \emph{mass} of $S\in\Ione(\R^3;\Latt)$ is defined to be
\begin{equation}\label{eq:Mass}
  \Mass(S):=\sup\b\{|\<S,\phi\>| \bsep\phi\in\mathscr{D}^3(\R^3)\text{ with }|\phi(x)|\leq 1\text{ for all }x\in\R^3\b\}.
\end{equation}
In the case where $S\in\Adm$ is characterised as above, this is equivalent to the formula
\begin{equation}\label{eq:MassvsMeasure}
  \Mass(S) = \int_\Gamma |b|\,\dd\Haus^1,
\end{equation}
which corresponds physically to the total length of dislocation, weighted by the Burgers vector.
In analogy with \S9.2 in \cite{Morgan}, we also define the \emph{mass ratio} for $S\in\Adm$ as
\begin{equation}\label{eq:MassRatio}
  \MRatio(S):=\sup\bg\{\frac{\Mass\b(S\Rstr\overline{B_r(s)}\b)}{r}\,\bg|\, s\in\supp(S),r>0\bg\};
\end{equation}
here $\overline{B_r(s)}$ is the closed ball of radius $r>0$ centred at $s\in\R^3$, and
$S\Rstr A$ means the restriction of a current $S$ to a set $A$. The mass ratio should be viewed
as a way of measuring the maximal spatial density of a current; for currents of fixed mass, $\MRatio(S)$
can be arbitrarily large (see Figure~\ref{fig:MassRatio} for an explanation). Note however that
as long as $S\neq 0$, then it follows that $\MRatio(S)\geq 1$: If $S$ is supported on
$\Gamma$, then the definition of the \emph{one--dimensional density of $S$} at $s\in\Gamma$,
defined in analogy with the definition in \S9.2 of \cite{Morgan}, must satisfy
\begin{equation}\label{eq:MRatioLowerBound}
  \MRatio(S)\geq \limsup_{r\to 0}\frac{\Mass\b(S\Rstr\overline{B_r(s)}\b)}{r}\geq \min_{b\in\Latt\setminus\{0\}}|b|=1.
\end{equation}

\begin{figure}[t]
  \includegraphics[width=0.35\textwidth]{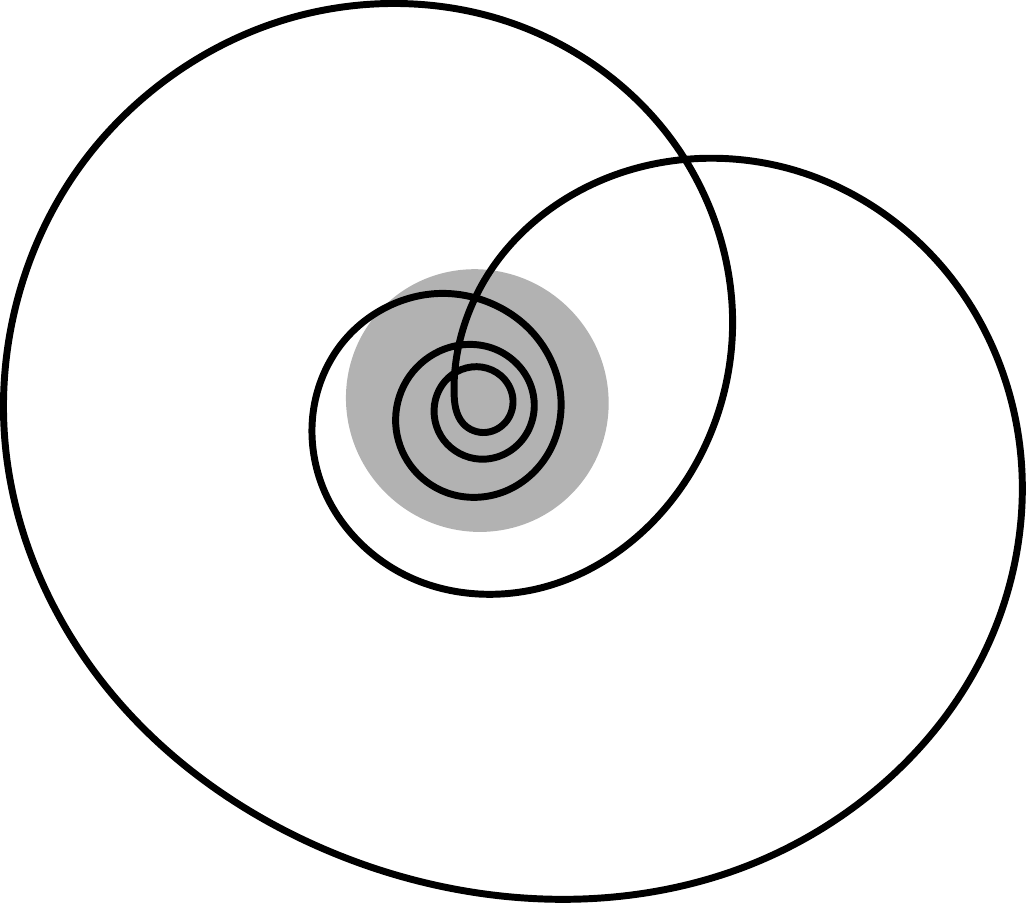}
  \caption{An example of $S\in\Adm$ with large $\MRatio(S)$. The ball in grey contains a
    large proportion of the mass. By `curling' the current tightly, the mass
    can be contained in an arbitrarily small ball, which may increase $\MRatio(S)$ while leaving
    $\Mass(S)$ fixed.}
    \label{fig:MassRatio}
\end{figure}

\subsection{Functions defined on dislocation configurations}
\label{sec:spaces-on-1-currents}
In order to formulate an appropriate setting in which to consider DDD, we will need to consider
fields which are defined on dislocations themselves. As such, we will consider functions which
are differentiable `along' a dislocation configuration $S$. This section lays out the basic
definitions we use, which allow us to state our main results.

Suppose that $S\in\Adm$, with corresponding $1$--rectifiable set $\Gamma$, Burgers vector
$b:\Gamma\to\Latt$ and tangent field $\tau$. Consider $g:\Gamma\to\R^3$ which is measurable with respect to
$\Haus^1\Rstr\Gamma$. We will say that
$\nabla_\tau g:\Gamma\to\R^3$, a $\Haus^1\Rstr\Gamma$--measurable function is a \emph{weak derivative of $g$ along $S$} if, for any $C^1$ function
$f:\R^3\to\R^3$,
we have
\begin{equation*}
  \int_\Gamma f\cdot \nabla_\tau g\,\dd\Haus^1=-\int_\Gamma Df[\tau]\cdot g\,\dd\Haus^1,
\end{equation*}
where $Df\in\BLO(\R^3;\R^3)$ denotes the Frechet derivative of $f$.
We note that if $G\in C^1(\R^3,\R^3)$ and $g:=G|_\Gamma$, $\nabla_\tau g = DG[\tau]$;
the fact that $\partial S=0$, i.e. $S$ has no boundary, ensures that no `boundary terms' are
required in the definition
above. It is straightforward to show that weak derivatives defined in this way are unique when
they exist by using the fact that $\Gamma$ is expressible as a union of images of $\R$ under Lipschitz maps, pulling back, and applying the Fundamental Lemma of the Calculus of Variations on
$\R$.

For $1\leq p<+\infty$, in analogy with the usual definitions, we set
\begin{equation*}
  \begin{aligned}
    L^p(S;\R^N)&:=\b\{g:C\to\R^N\bsep \|g\|_p<+\infty\b\}&
    &\text{with norm}&\|g\|_{L^p}&:= \bg(\int_\Gamma |g|^p\,\dd\Haus^1\bg)^{1/p},\\
    L^\infty(S,\R^N)&:=\b\{g:\Gamma\to\R^N\bsep \|g\|_{L^\infty}<+\infty\b\}&
    &\text{with norm}&\|g\|_{L^\infty}&:= \mathrm{ess}\sup \b\{|g(s)| \bsep s\in \Gamma\},
  \end{aligned}
\end{equation*}
where the essential supremum in the latter definition is taken up to $\Haus^1$ null sets.

We also define the space
\begin{equation*}
  H^1(S;\R^3):=\b\{g\in L^2(S;\R^3)\bsep \nabla_\tau g\in L^2(S;\R^3)\b\},
\end{equation*}
which has a real Hilbert space structure when endowed with the inner product
\begin{equation*}
  (f,g) = \int_\Gamma \B[f\cdot g+\nabla_\tau f\cdot \nabla_\tau g\B]\,\dd\Haus^1.
\end{equation*}
As usual, we will denote the dual space of $H^1(S;\R^N)$ as $H^1(S;\R^N)^*$.

\subsection{Energy of regularised slip distributions}
\label{sec:smooth-distr-slip}
For any slip configuration $T\in\Itwo(\R^3;\Latt)$, we now define a regularised energy by a procedure
similar to that considered in \S2.4.2 of \cite{CGO15}, distributing slip about the set $\Sigma$
on which $T$ is supported by mollifying.
Physically--speaking, our choice to smooth distributions of slip can be justified by noting that
a definition
of the lattice plane over which slip has occurred cannot be given with a precision greater than
that of a single lattice spacing, and similarly, the position of a dislocation core cannot 
be ascertained to a precision of less than a few lattice spacings. As such, regularising
defines the lengthscale at which linear elasticity is invalid, and as stated in
\S\ref{sec:regul-theory-ddd}, this choice has the convenient mathematical
benefit that all fields considered are non--singular, in common with the reality on the
atomistic scale.

To this end,
we suppose that $\varphi^1\in C^\infty(\R^3)$ is a function which is rapidly decreasing (in the Schwartz sense), is radially symmetric, and satisfies $\int_{\R^3}\varphi^1(x)\dx = 1$, such as
the Gaussian
\begin{equation}\label{eq:Gaussian}
  \varphi^1(x)=\frac{1}{(2\pi)^{3/2}}\exp\left(-\smfrac{1}2|x|^2\right).  
\end{equation}
For any $\eps>0$, we then define $\varphi^\eps(x):=\eps^{-3}\varphi^1(x/\eps)$.

If $T\in\Itwo(\R^3;\Latt)$ is a slip configuration supported on $\Sigma$, with corresponding normal
field $\nu$, and slip $b:\Sigma\to\Latt$,
in analogy with the plastic deformation considered in \eqref{eq:PlasticDef},
we define a \emph{smoothed plastic distortion} $z^\eps_T:\R^3\to\R^{3\times3}$ to be
\begin{equation}\label{eq:zepsDef}
  z^\eps_T(x):= \int_\Sigma b(s)\otimes\nu(s)\varphi^\eps(x-s)\dd\Haus^2(s).
\end{equation}
Here $\Haus^2$ is the 2--dimensional Hausdorff measure (i.e. the surface area measure on $\Sigma$),
and the field $z^\eps_T$ is well--defined and smooth since $\varphi^\eps$ is assumed to be $C^\infty$.

Following the ideas of Kr\"oner and Mura \cite{Kroner58,Kröner1959,M91}, we suppose that the system
equilibriates elastically in response to this plastic strain.
Using the additive decomposition of the strain, the total distortion
$\beta^\eps$ is assumed to take the form
\begin{equation*}
  \beta^\eps = z^\eps_T+Du^\eps,
\end{equation*}
where $u^\eps:\R^3\to\R^3$, and recalling the definition of $\dot{H}^1(\R^3)$ given in
\S\ref{sec:notation-conventions}, the energy at equilibrium due to a configuration of slip
described by $T\in\Itwo(\R^3;\Latt)$ is therefore
\begin{equation}\label{eq:SlipEnergy}
  \E^\eps(T):=\min_{u^\eps\in\dot{H}^1(\R^3)} \I(z^\eps_T+Du^\eps),
\end{equation}
where the total internal energy, $\I:L^2(\R^{3\times3})\to\R$, is defined to be
\begin{equation}\label{eq:Energy}
  \I(\beta):=\int_{\R^3} \smfrac12\beta : \Elas\,\beta\dx = \int_{\R^3} \smfrac12\Elas_{ijkl}\beta_{ij}\beta_{kl}\dx.
\end{equation}

The fundamental insight in the work of Kr\"oner and Mura is that while dislocations must be the
boundary of a region of slip, the precise surface over which slip has occurred is irrelevant to
the energy and mechanical response of the system, i.e. although \eqref{eq:SlipEnergy} appears to
depend on $T$, in fact it only depends upon $\partial T$. This is compatible with the
experimental observation that dislocations moving through a crystal leave no trace of having
passed, since the crystal `heals' perfectly after slip occurs. Using these insights, we prove the
following theorem which encodes the dependence of the energy on $\partial T$ alone, and
moreover provides an expression for the internal energy given directly in terms of an integral
over the dislocation configuration itself.

\begin{theorem}\label{th:Energy}
  If $T\in\mathscr{I}_2(\R^3;\Latt)$ is compactly supported, the energy $\E^\eps$ defined in
  \eqref{eq:SlipEnergy} depends only on $S=\partial T$, so that we may define
  $\Phi^\eps:\Adm\to\R$ with
  \begin{equation*}
    \Phi^\eps(S):=\E^\eps(T)\quad\text{for any }S\in\Adm\text{ where }S=\partial T.
  \end{equation*}
  Further, if $T\in\Itwo^3$ and $S = \partial T\in\Adm$ respectively take the form
  $\<T,\phi\> = \int_{\Sigma}\<\nu,\phi\>b\,\dd\Haus^2$ for $\phi\in \mathscr{D}^2(\R^3)$ and
  $\<S,\eta\> =  \int_{\Gamma}\<\tau,\eta\>b\,\dd\Haus^1$ for $\eta\in\mathscr{D}^1(\R^3)$,
  then the energy functionals $\E^\eps(T)$ and $\Phi^\eps(S)$ may be written as
  \begin{subequations}
\begin{align}\label{eq:JRep}
\begin{split}
  \E^\eps(T) &= \int_{\Sigma\times\Sigma}\smfrac12\Jker_{abcd}(s-t)b_a(s)\nu_b(s)b_c(t)\nu_d(t)\dd(\Haus^2\otimes\Haus^2)(s,t)
\end{split}\\
  \label{eq:KRep}
  \begin{split}
    \Phi^\eps(S)&= \int_{\Gamma\times\Gamma}\,\smfrac12\Kker_{abcd}(s-t)b_a(s)\tau_b(s)b_c(t)\tau_d(t)
    \dd(\Haus^1\otimes\Haus^1)(s,t),
  \end{split}
\end{align}
\end{subequations}
where the kernels $\Jker,\Kker:\R^3\to\R^{3\times3\times3\times3}$ are defined to be
\begin{subequations}\label{eq:KerDefs}
\begin{align}
  \Jker_{kmgr}(s)&:=\int_{\R^3}\Elas_{abcd}\Alt_{bpl}\Elas_{ijkl}\GMat^\eps_{a i,jn}(x-s)\Alt_{pmn}
                       \Alt_{dqh}\Elas_{efgh}\GMat^\eps_{c e,fs}(x)\Alt_{qrs}\dx,\label{eq:SSKer}\\
  \Kker_{kpgq}(s)&:=\int_{\R^3}\Elas_{abcd}\Alt_{bpl}\Elas_{ijkl}\GMat^\eps_{a i,j}(x-s)\Alt_{dqh}\Elas_{efgh}
                        \GMat^\eps_{c e,f}(x)\dx,\label{eq:LLKer}
\end{align}
\end{subequations}
where $\Alt$ is the alternating tensor as defined in \S\ref{sec:notation-conventions}, and
$\GMat^\eps:=\GMat *\varphi^\eps$, i.e. the convolution of the elastic Green's function with
$\varphi^\eps$.
Moreover, $\Jker$ and $\Kker$ satisfy the following properties.
  \begin{enumerate}
  \item $\Jker_{abcd}(s)=\Jker_{cdab}(s)=\Jker_{abcd}(-s)$ and
    $\Kker_{abcd}(s)=\Kker_{cdab}(s)=\Kker_{abcd}(-s)$ for any $s\in\R^3$;
  \item $\Jker$ and $\Kker$ are smooth.
  \item For any $m\in\N$, $0\leq j\leq m$, and vectors $v_1,\ldots,v_j\in\Sph^2$,
    there exists a constant $C_{m,j}$ such that for all $s\in\R^3$,
    \begin{equation*}
      \B|D^m \Kker(s):\B[v_1,\ldots, v_j,\frac{s}{|s|},\ldots,\frac{s}{|s|}\B]\B|
      \leq \frac{C_{m,j}}{\sqrt{\eps^{2m+2}+\eps^{2j}|s|^{2m+2-2j}}}.
    \end{equation*}
  \end{enumerate}
  
\end{theorem}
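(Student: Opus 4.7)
The proof naturally splits into three parts: showing that the minimum in \eqref{eq:SlipEnergy} depends only on $\partial T$, deriving the explicit kernel representations \eqref{eq:JRep}--\eqref{eq:KRep}, and verifying the stated properties of the kernels.

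For the first part, I would compute the minimiser $u^\eps$ via the Euler--Lagrange equation $-\Elas_{ijkl}u^\eps_{k,lj} = \Elas_{ijkl}z^\eps_{T,kl,j}$, which in Fourier (using \eqref{eq:GMatEpsEqn}) is solved explicitly in terms of $\widehat{\GMat^\eps}$. Substituting the minimising $\beta^\eps = z^\eps_T + Du^\eps$ into $\I$ and applying Plancherel produces a representation of the form
\begin{equation*}
  \E^\eps(T) = \frac{1}{2(2\pi)^3}\int_{\R^3}\overline{\widehat{z^\eps_T}(k)}:\mathsf{M}(k):\widehat{z^\eps_T}(k)\,\dd k,
\end{equation*}
where $\mathsf{M}(k)$ is a Hermitian tensor field assembled from $\Elas$ and $\widehat{\GMat^\eps}$. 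A direct algebraic computation shows $\mathsf{M}(k)(a\otimes k) = 0$ for every $a\in\R^3$---reflecting that purely compatible strains carry no energy after elastic relaxation---so the quadratic form depends on $\widehat{z^\eps_T}$ only through its curl. A short calculation using Stokes' theorem applied to the BV primitive constructed in \S\ref{sec:disl-conf-slip} then shows that, distributionally, $(\curl z^0_T)_{ik} = b_i\tau_k\,\Haus^1\Rstr\partial\Sigma$, so $\curl z^\eps_T$, and hence $\E^\eps(T)$, depends only on $S = \partial T$, yielding the well-definedness of $\Phi^\eps$.

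For the kernel representations, I would reinterpret the Fourier expression above as a position-space convolution kernel: unwinding the indices of $\mathsf{M}(k)$ exposes it as built from two factors of the Fourier symbol of $D\GMat^\eps$, and direct inversion of the Fourier transform delivers \eqref{eq:JRep} with $\Jker$ as in \eqref{eq:SSKer}. Passing to \eqref{eq:KRep} is accomplished by transferring one distributional derivative from each $z^\eps_T$ factor onto the corresponding $\GMat^\eps$ factor using the curl identity above, which converts $\nu\,\dd\Haus^2$ into $\tau\,\dd\Haus^1$ and introduces the alternating tensors seen in \eqref{eq:LLKer}. The symmetries of $\Jker$ and $\Kker$ then follow on inspection from the major symmetry of $\Elas$ together with the evenness of $\GMat^\eps$ (itself inherited from the radial symmetry of $\varphi^\eps$), and smoothness of both kernels is automatic because $\varphi^\eps\in\mathcal{S}(\R^3)$ renders $\GMat^\eps$ smooth and the integrands in \eqref{eq:SSKer}--\eqref{eq:LLKer} inherit sufficient decay to permit differentiation under the integral.

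The main technical obstacle is the scale-dependent derivative bound on $\Kker$. I would first reduce to the case $\eps = 1$ via the scaling $\GMat^\eps(x) = \eps^{-1}\mathsf{G}^1(x/\eps)$, which gives $\Kker(s) = \eps^{-1}\mathsf{K}^1(s/\eps)$, where $\mathsf{K}^1$ denotes the kernel \eqref{eq:LLKer} with $\eps$ replaced by $1$; the target estimate then reduces to $|D^m\mathsf{K}^1(\sigma):[v_1,\ldots,v_j,\sigma/|\sigma|,\ldots,\sigma/|\sigma|]|\leq C_{m,j}(1+|\sigma|^{2m+2-2j})^{-1/2}$. In Fourier, $\widehat{\mathsf{K}^1}(k)$ has the form $\hat\varphi(k)^2|k|^{-2}\mathsf{N}(k/|k|)$ for a smooth tensor field $\mathsf{N}$ on $\Sph^2$. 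I would then split the inverse Fourier integral for $D^m\mathsf{K}^1$ at $|k|=1$: the low-frequency piece is bounded uniformly in $\sigma$ thanks to the integrability of $|k|^{-2}$ near the origin in $\R^3$, while on the high-frequency piece the decay $|\sigma|^{-(m+1-j)}$ is produced by integrating by parts $m+1-j$ times along the direction $\sigma/|\sigma|$ in $k$, with the Schwartz decay of $\hat\varphi(k)^2$ absorbing the resulting derivatives.
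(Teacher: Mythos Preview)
Your argument is correct in outline, but takes a different route from the paper at two points, and one step of your sketch needs a small repair.

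\textbf{Comparison with the paper.} For the energy representation and the $\partial T$--dependence, the paper works in real space: it writes the minimiser $u^\eps$ via the Green's function, then uses the identity $\Alt_{plb}\Alt_{pmn}=\Id_{lm}\Id_{bn}-\Id_{bm}\Id_{ln}$ to rewrite $\beta^\eps$ as an integral over $\Sigma$ involving $\GMat^\eps_{ai,jn}\Alt_{pmn}\nu_m$, and finally applies Stokes to convert $\Sigma$--integrals to $\Gamma$--integrals. This produces the specific $\Alt$--laden forms of $\Jker$ and $\Kker$ directly. Your Fourier projection argument is equivalent and perfectly valid, but note that your $\mathsf{M}(k)$ naturally comes out as $\Elas-\Elas k\,\Dsf(k)^{-1}k\,\Elas$ (times $|\widehat{\varphi^\eps}|^2$); matching this to the stated kernels \eqref{eq:SSKer}--\eqref{eq:LLKer} still requires the same alternating--tensor identity, so that step is not quite ``direct inversion''.

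For property (3), the paper does \emph{not} argue by frequency splitting and integration by parts in $\R^3$. Instead it exploits a one--dimensional reduction due to Bacon--Barnett--Scattergood: writing $k=rz$ with $z\in\Sph^2$ and using the $(-2)$--homogeneity of $k\mapsto k_kk_n\Dsf(k)^{-1}\Dsf(k)^{-1}$, one obtains
\[
\Kker(s)=\int_{\Sph^2}\mathsf{N}(z)\,\eta^\eps(z\cdot s)\,\dd\Haus^2(z),
\qquad \eta^\eps(t)=\eps^{-1}\eta^1(t/\eps),
\]
with $\eta^1$ rapidly decreasing. Derivatives of $\Kker$ in the direction $s/|s|$ then become factors of $(z\cdot s/|s|)=\cos\theta$ inside the $\Sph^2$--integral, and a change of variables $t=\tfrac{|s|}{\eps}\cos\theta$ reduces the bound to a single one--dimensional integral of $|t|^{m-j}|(\eta^1)^{(m)}(t)|$. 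This avoids cutoffs and boundary terms entirely and makes the anisotropy between radial and transverse derivatives transparent.

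\textbf{A small gap in your sketch.} With a \emph{sharp} split at $|k|=1$, each integration by parts on the high--frequency piece produces a surface term on $\{|k|=1\}$, and the first of these decays only like $|\sigma|^{-1}$, not $|\sigma|^{-(m+1-j)}$. The fix is routine---use a smooth cutoff $\chi(|k|)$ so no boundary terms appear, or observe that the full expression (no split) is uniformly bounded and combine this with the large--$|\sigma|$ decay---but as written the argument does not close.
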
\medskip

\noindent
The proof of this theorem is given in \S\ref{sec:elast-energy-disl}, and proceeds by verifying
that $\E^\eps(T)$ as given in \eqref{eq:SlipEnergy}
is well--defined, before characterising the solution of the minimisation problem by applying
the elastic Green's function and the ideas used to derive \emph{Mura's formula} \cite{M91}.
Ideas from \cite{BBS80} (closely related to ideas used in proving Theorem~4.1 in \cite{CGO15})
are then used to derive a Fourier characterisation, which allow us to deduce the asserted
properties of the kernels $\Jker$ and $\Kker$.

We note at this stage that the definition of the regularised energy $\E^\eps$ clearly depends upon
the precise choice of $\varphi^\eps$, which particularly influences the behaviour of segments of
dislocation line which are at a distance on the order of $\eps$ apart. As argued in the
introduction,
this is exactly the scale at which linear elasticity theory fails to be valid, but the
results of \cite{CGO15} demonstrate that, to leading order as $\eps\to0$, the $\Gamma$--limit of
the energy is independent of the choice of regularisation. While this theory (and indeed no
continuum theory) can therefore accurately capture the behaviour of dislocations on the lattice
scale, we can hope to accurately capture the behaviour of dislocation loops which are `large'
relative to the interatomic distance.
Furthermore, to capture additional knowledge about the crystal structure considered, the choice to
make $\varphi^1$ radially symmetric and independent of $b$ and $\nu$ could be relaxed, allowing
for the modelling of different `widths' of plane over which slip occurs, although we do not pursue
this choice here.

\subsection{The Peach--Koehler force}
The power of the expression for the energy given in \eqref{eq:KRep} is that it allows us to take
explicit variations of the energy, and thereby to derive the equivalent of the
configurational Peach--Koehler force in this model.
Since we are varying the
set on which the Burgers vector $b$ and line direction $\tau$ are defined, an appropriate notion
of variation is that of \emph{inner variation} (see Chapter~3 of \cite{GiaquintaHildebrandtPartI})
or \emph{variation of the reference state} (as described in \S2.1.5 of \cite{GMSVol2}).
To construct an inner variation, we use the notion of pushforward
(defined in \S\ref{sec:pushforward-pullback}): if $g\in C^{0,1}(\R^3;\R^3)$, we define the
\emph{inner variation}  of $\Phi^\eps$ at $S\in\Adm$
in the direction $g$ to be the linear functional
\begin{equation}\label{eq:InnerVariation}
  \<D\Phi^\eps(S),g\>:= \frac{\dd}{\dd\del}\Phi^\eps\b((\id+\delta g)_\#S\b)\bg|_{\del=0}
\end{equation}
where $\id:\R^3\to\R^3$ is the identity mapping $\id(x):=x$.
The result of taking this variation and properties of the resulting functional are encoded in the
following theorem, which provides an expression of the variation as a field defined on the
dislocation configuration, as well as a uniform bound and a form of continuity result under
deformation.

\begin{theorem}
  \label{th:Force}
  If $S\in\Adm$, the inner variation of the energy at $S$ is given by
  \begin{equation}
    \label{eq:PKForce}
    \begin{gathered}
      \<D\Phi^\eps(S),g\> = -\int_\Gamma \PK_i(s,S)g_i(s)\dd\Haus^1(s),\quad\text{where}
      \quad \PK(s,S):=G(s,S)\wedge \tau(s)\\
      \text{and}\quad G_k(s,S):=\int_\Gamma\Alt_{klm}\Kker_{alcd,m}(s-t)
      b_a(s)b_c(t)\tau_d(t)\dd\Haus^1(t).
    \end{gathered}
  \end{equation}
  If $S = \partial T$, where $T\in\Itwo(\R^3;\Latt)$ is supported on $\Sigma$ with Burgers vector
  $b:\Sigma\to\Latt$ and normal field
  $\nu$, $G$ can alternatively be written
  \begin{equation}\label{eq:PKForceSurface}
    G_k(s,S) = -\int_\Sigma\Alt_{def}\Alt_{klm}\Kker_{alcd,mb}(s-t)b_a(s)b_c(t)\nu_f(t)\dd\Haus^2(t).
  \end{equation}
  Moreover, recalling the definitions of $\Mass$ and $\MRatio$ given respectively in
  \eqref{eq:Mass} and \eqref{eq:MassRatio}, we have the bound
  \begin{equation}\label{eq:PKLInfty}
      \b\|\PK(S)\b\|_{L^\infty}\leq
    \frac C\eps\|b\|_{L^\infty}\MRatio(S)\log\bg|1+\frac{2\,\Mass(S)}{\eps\,\MRatio(S)}\bg|,
  \end{equation}
  and if $g:S\to\R^3$ is a Lipschitz map and $F:=\id+g$, then
  \begin{equation}\label{eq:PKContinuity}
    \b\|F^\#\PK(F_\#S)-\PK(S)\b\|_{L^\infty}\leq \b(1+C\Mass(S)\b)\|\nabla_\tau g\|_{L^\infty}+C\Mass(S)\|g\|_{L^\infty},
  \end{equation}
  where $C$ is a constant independent of $v$ and $S$.
\end{theorem}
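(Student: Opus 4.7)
The strategy is to compute the inner variation \eqref{eq:InnerVariation} directly from the representation \eqref{eq:KRep}, reorganise the result via integration by parts along $\Gamma$ into the cross-product form $G\wedge\tau$, and then derive the surface representation and quantitative estimates as consequences. First, using the definition of pushforward of $1$-currents together with the area formula for Lipschitz maps on $1$-rectifiable sets (in which the Jacobian $|DF_\del[\tau]|$ cancels exactly against the normalisation of the transported tangent), I would verify that for $F_\del:=\id+\del g$,
\begin{equation*}
  \Phi^\eps(F_{\del\,\#}S)=\int_{\Gamma\times\Gamma}\smfrac12\Kker_{abcd}\b(F_\del(s)-F_\del(t)\b)\,b_a(s)[DF_\del(s)\tau(s)]_b\,b_c(t)[DF_\del(t)\tau(t)]_d\,\dd(\Haus^1\otimes\Haus^1)(s,t).
\end{equation*}
Differentiating at $\del=0$ and using $Dg(s)\tau(s)=\nabla_\tau g(s)$ produces three families of terms: a translation term with factor $\Kker_{abcd,e}(s-t)(g_e(s)-g_e(t))$, and two tangent-variation terms with factors $(\nabla_\tau g)_b(s)$ and $(\nabla_\tau g)_d(t)$.

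The tangent-variation terms are then absorbed by the weak-derivative formula along $S$ from \S\ref{sec:spaces-on-1-currents}, valid because $\pp S=0$, which converts $(\nabla_\tau g)_b(s)$ into an extra derivative $\tau_e(s)\Kker_{abcd,e}(s-t)$ hitting $g_b(s)$. Using the symmetries $\Kker_{abcd}(s)=\Kker_{cdab}(s)=\Kker_{abcd}(-s)$ of Theorem~\ref{th:Energy}(1), the $s$- and $t$-versions of each term can be symmetrised, producing a single integral against $g(s)$ with integrand of antisymmetric shape $[\Kker_{abcd,e}(s-t)\tau_b(s)-\Kker_{abcd,b}(s-t)\tau_e(s)]b_a(s)b_c(t)\tau_d(t)$; the alternating tensor identity $\Alt_{ilm}\Alt_{ipq}=\delta_{lp}\delta_{mq}-\delta_{lq}\delta_{mp}$ then rewrites this as $\Alt_{ilm}\tau_l(s)G_m(s,S)$, namely $G\wedge\tau$ with $G$ as in \eqref{eq:PKForce}. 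For the surface form \eqref{eq:PKForceSurface}, I would fix $s$ and observe that the $t$-integral defining $G_k(s,S)$ has the form $\<\pp T,\eta\>$ for a certain smooth $1$-form $\eta$ depending on $s$; rewriting this as $\<T,\dd\eta\>$ and evaluating the exterior derivative then produces \eqref{eq:PKForceSurface} with one additional derivative on $\Kker$.

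For the uniform bound \eqref{eq:PKLInfty}, I would apply Theorem~\ref{th:Energy}(3) with $m=1$ to obtain the uniform estimate $|D\Kker(y)|\leq C/\eps^2$ and the radial estimate $|D\Kker(y)\cdot y/|y||\leq C/(\eps|y|)$, and decompose the integral defining $G_k(s,S)$ into the near part on $B_\eps(s)$ and dyadic annuli $B_{2^{k+1}\eps}(s)\setminus B_{2^k\eps}(s)$. On each region the pointwise bound combined with $\Mass\b(S\Rstr\overline{B_r(s)}\b)\leq r\,\MRatio(S)$ from \eqref{eq:MassRatio} yields a uniformly bounded summand, and the dyadic sum is truncated at the scale $r\sim\Mass(S)/\MRatio(S)$ at which the total available mass is exhausted, producing the logarithmic factor.

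For the continuity estimate \eqref{eq:PKContinuity}, I would pull $\PK(F_\#S)$ back to $\Gamma$ via the change of variables set up above, and compare the two integrands term by term. The difference splits into a shift contribution controlled by $|F(s)-F(t)-(s-t)|\leq\min\b(2\|g\|_{L^\infty},|s-t|\|\nabla_\tau g\|_{L^\infty}\b)$ via first-order Taylor expansion of $D\Kker$ (handled by the same dyadic decomposition relative to $\eps$), and a tangent contribution controlled by $|DF\tau/|DF\tau|-\tau|\leq C\|\nabla_\tau g\|_{L^\infty}$; these contribute the two terms on the right-hand side of \eqref{eq:PKContinuity}. The principal obstacle is the algebraic reorganisation following the integration by parts: recasting three derivative terms of distinct index shape---one of them involving the derivative of $g$ itself---into the compact cross-product $G\wedge\tau$ requires careful use of the symmetries of $\Kker$ and the antisymmetry of $\Alt$, and is the real geometric content of the theorem.
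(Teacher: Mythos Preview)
Your derivation of the Peach--Koehler force expression \eqref{eq:PKForce} and the surface form \eqref{eq:PKForceSurface} is essentially identical to the paper's argument (Lemma~\ref{th:PKForce}): pushforward, differentiate at $\del=0$, exploit the symmetries $\Kker_{abcd}(s)=\Kker_{cdab}(s)=\Kker_{abcd}(-s)$, integrate by parts along $\Gamma$ (using $\partial S=0$), and recombine via $\Alt_{ijk}\Alt_{klm}=\Id_{il}\Id_{jm}-\Id_{im}\Id_{jl}$.

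For the $L^\infty$ bound \eqref{eq:PKLInfty} you take a genuinely different route. The paper uses a \emph{rearrangement} argument: it sets $\mu(r):=\Mass\b(S\Rstr\overline{B_r(s)}\b)$, writes the integral of $|b(t)|/\sqrt{\eps^2+|s-t|^2}$ as a Stieltjes integral against $\mu'$, changes variable via the inverse $\rho=\mu^{-1}$, and then uses $\rho(m)\geq m/\MRatio(S)$ to reduce to an explicit one--dimensional integral yielding the logarithm. Your dyadic--annulus decomposition achieves the same end and is equally valid, but note a mislabeling: the decay you need on the annuli is $|D\Kker(y)|\leq C/(\eps|y|)$ in \emph{all} directions, which is the $m=1$, $j=1$ case of Theorem~\ref{th:Energy}(3). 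The $j=0$ ``radial'' estimate only controls $D\Kker(y)[y/|y|]$, and this is not enough since the integrand $\Alt_{klm}\Kker_{alcd,m}(s-t)$ involves every component of $D\Kker$. The rearrangement argument has the mild advantage of producing the stated closed form in one step; the dyadic argument gives the same logarithmic factor up to constants but requires a separate treatment of the tail beyond the scale $\Mass(S)/\MRatio(S)$.

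For the continuity estimate \eqref{eq:PKContinuity} the paper takes a much more elementary route than you propose: it simply uses the \emph{global} bound $|D^2\Kker|\leq C$ (the $m=2$, $j=2$ case of Theorem~\ref{th:Energy}(3)) to get $|D\Kker(F(s)-F(t))-D\Kker(s-t)|\leq C\|g\|_{L^\infty}$ by Taylor expansion, and the global bound $|D\Kker|\leq C$ to control the tangent--variation term; integrating over $\Gamma$ then produces the $\Mass(S)$ factors directly, with no dyadic decomposition. Your proposed route also contains a genuine slip: the estimate $|g(s)-g(t)|\leq |s-t|\,\|\nabla_\tau g\|_{L^\infty}$ is false in general, since $\nabla_\tau g$ only controls variation along the \emph{arc length} of $\Gamma$, and the intrinsic distance $d_\Gamma(s,t)$ can be arbitrarily large compared to the Euclidean distance $|s-t|$ (think of two nearby points on opposite strands of a tightly folded curve). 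One must use $|g(s)-g(t)|\leq 2\|g\|_{L^\infty}$ instead, at which point the dyadic machinery is unnecessary and the simpler argument already gives the stated bound.
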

\medskip

\noindent
A proof of this Theorem~\ref{th:Force} is given in \S\ref{sec:deform-disl-peach}.
The main achievement of this result is in obtaining the bound \eqref{eq:PKLInfty}, which is crucial
to proving the long--time existence result for the formulation of DDD considered in the following
sections. This bound is a significant improvement over the more na\"ive estimate
$\|\PK(S)\|_{L^\infty}\leq C\,\Mass(S)$, which can be deduced
directly from the fact that $\Kker$ and $D\Kker$ are uniformly bounded, proved in
Theorem~\ref{th:Energy}. Instead, the estimate is obtained by a rearrangement argument, using
the limited geometric information $\MRatio(S)$ provides to guarantee this weaker dependence on the
mass.

The bound \eqref{eq:PKContinuity} provides a form of continuity for the Peach--Koehler force,
demonstrating that under a Lipschitz variation of $S$ which is close to the identity,
the Peach--Koehler force of the new configuration $F_\#S$ is close to that obtained on the initial
configuration. Since these fields are defined on different sets, in order to measure the difference
we must pull back this new field onto the initial configuration.

\subsection{Dislocation mobility}\label{sec:dislocation-mobility}
It is widely believed that moving dislocations dissipate energy via phonon radiation (see for example \S3.5 in
\cite{HB11}, and \S7-7 in \cite{HL82}): the movement of a dislocation through a `rough' landscape of local
minima generates high--frequency lattice waves which radiate away as heat, leading to drag.
When modelling dislocation motion at low to moderate strain rates, it is typically assumed that
drag dominates inertial effects, which are therefore neglected. This assumption has been supported
by microscopic simulations of dislocation motion (see for example the discussion in \S4.5 and
\S10.2 of \cite{BC06}, and \cite{CCBY01,CCBY02,BGE06}). 
Neglecting inertia inevitably entails that dislocation motion
(and hence plastic distortion) is modelled as a dissipation--dominated process;
as such, a natural mathematical framework for modelling dislocation motion is that of a \emph{generalised gradient flow} \cite{AGS}.

At low temperatures, the process of slip is dominated by \emph{glide}, a process which
allows dislocations to move while conserving lattice volume \cite{HL82,AO05,HB11}. Requiring that dislocations undergo
glide motion only is equivalent to requiring that slip can only evolve on planes which contain $b$.
The evolution of slip in directions parallel to the Burgers vector is called \emph{climb}, and
requires mass transport via point defect diffusion; at low temperatures this is a
much slower process than that of glide.

To model these phenomena in DDD simulations, various constitutive assumptions on dislocation
mobility are available; for various examples, see \cite{CB04,ACetal07,HB11}. Usually,
the \emph{velocity}
$v$ of a segment of dislocation is related to the configurational force on the dislocation line
via a mobility function $\mathcal{M}$, which depends locally on the Burgers vector $b$, the
dislocation orientation $\tau$, and the Peach--Koehler force $\PK$:
\begin{equation*}
  v = \mathcal{M}(b,\tau,\PK).
\end{equation*}
Such mobilities are informed by molecular dynamics simulations or experiment, and are generally
linear, power laws, or possibly include some frictional threshold before the onset of dislocation
motion, mimicking the Peierls barrier. In common with many geometric evolution problems, it is
usually assumed that dislocations have velocity only in normal directions, i.e.
$\b(\mathcal{M}(b,\tau,f),\tau\b)=0$ for any $b,\tau$ and $f$.

In order to define a generalised gradient flow framework which encompasses mobilities of the type
referred to above, we restrict ourselves to considering mobilities which take the form
\begin{equation*}
  \mathcal{M}(b,\tau,f) := -\nabla_f\psi^*(b,\tau,-f),
\end{equation*}
where $\psi^*$ is a \emph{entropy production} which is convex in $f$, describing the rate at
which entropy is produced by the force $f$. Requiring the existence of $\psi^*$ is not particularly
restrictive, as it ensures that energy is conserved in a closed system, and to the author's
knowledge, all mobility laws for DDD used in practice take this form.

When an entropy production is defined, it is natural to define a conjugate
\emph{dissipation potential} $\psi$,
which describes the rate at which energy is lost through the variation of a dislocation
configuration, and is given as the Legendre--Fenchel transform of $\psi^*$, i.e.
\begin{equation}\label{eq:EntropyProduction}
  \psi(b,\tau,v) := \sup_{f\in\R^3}\b\{\<v,f\>-\psi^*(b,\tau,f)\b\}.
\end{equation}
The frictional force resulting from a velocity $v$ is then $-\nabla\psi(v)$; this
follows from the fact that for convex conjugate functions,
\begin{equation}
  v=-\nabla\psi^*(-f)\quad\text{if and only if}\quad f=-\nabla\psi(-v)\quad\text{if and only if}\quad
  \<f,v\>=\psi(-v)+\psi^*(-f).\label{eq:ConConjRel}
\end{equation}
As an example, in the case of a linear relationship between configurational force and velocity,
$v=B(b,\tau)f$, it is straightforward to check that we may define
\begin{equation*}
  \psi^*(f) = \tfrac12\<f, B(b,\tau) f\>
  \qquad\text{and}\qquad\psi(v) = \tfrac12\<v,B(b,\tau)^{-1}v\>,
\end{equation*}
where $B^{-1}$ denotes the matrix inverse of $B$.

\begin{figure}[t]
  \includegraphics[width=0.5\textwidth]{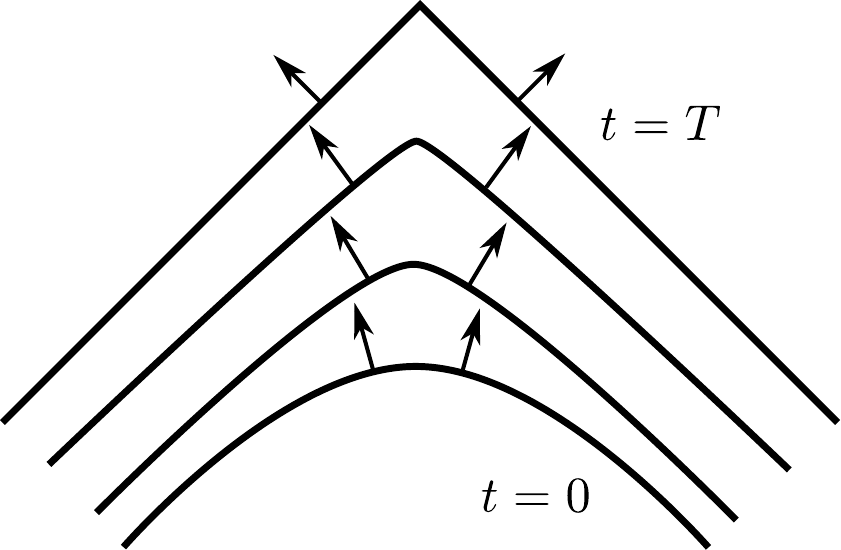}
  \caption{Development of a corner in finite time if dissipation potential depends on $v$ alone. Arrows reflect instantaneous velocity of the dislocation.}
  \label{fig:FiniteTimeCorner}
\end{figure}

In practice, we find that requiring that the dissipation potential is uniquely a function of
the dislocation velocity as in \eqref{eq:EntropyProduction}
is insufficient to guarantee a well--defined evolution. As an illustration,
consider the possible scenario in Figure~\ref{fig:FiniteTimeCorner}.
While the dislocation is initially smooth,
and $\PK$ is smooth up until the final time, $v=B(b,\tau)\PK$ loses regularity exactly as a
jump in $\tau$ develops. At the final time, the dynamics could require $v$ to be discontinuous,
resulting in segments of dislocation `ripping apart', breaking the physical requirement
that dislocations are the boundary of regions of slip, and leading to a blow--up of the evolution.

To avoid this possibility, in the following section, we introduce assumptions requiring that
the energy dissipated by dislocation motion also depends on the rate of change of the tangent to
the dislocation.


\subsection{Dissipation potential}
\label{sec:Constitutive}
Motivated by the discussion in \S\ref{sec:dislocation-mobility}, we suppose that the velocity of a
dislocation is described by a field along its length, $v$, and assume that the dissipation
potential for a dislocation configuration $S$ supported on $\Gamma\subset\R^3$ with a velocity
field $v:\Gamma\to \R^3$
is expressed as
\begin{equation*}
  \Psi(S,v)
  = \int_\Gamma \tfrac12 \nabla_\tau v\cdot A(b,\tau)\nabla_\tau v +\psi(b,\tau,v)\,\dd\Haus^1
\end{equation*}
where, $\nabla_\tau v$ denotes the weak derivative of the velocity along the dislocation line
as defined in \S\ref{sec:spaces-on-1-currents},
and as usual, $\tau:\Gamma\to\Sph^2$ is the tangent field to $S$.
We make the following constitutive assumptions on $A$ and $\psi$.
\begin{itemize}
\item[\bf (C$_1$)] $A:\Latt\times\Sph^2\to\R^{3\times 3}$ is a matrix--valued
  function which is symmetric and strictly positive definite everywhere, and
  there exists $\alpha>0$ such that
  \begin{equation*}
    w\cdot A(b,\tau)w\geq\alpha|w|^2\qquad\text{for any }(b,\tau,w)\in\b(\Latt\setminus\{0\}\b)\times\Sph^2\times\R^3.
  \end{equation*}
\item[\bf (C$_2$)] $\psi:\Latt\times\Sph^2\times\R^3\to[0,+\infty]$ is a positive function
  which is strictly convex in its third argument, satisfying $\psi(b,\tau,0)=0$
  for any $(b,\tau)\in\Latt\times\Sph^2$. Moreover,
  \begin{equation*}
    \psi(b,\tau,v)=+\infty\qquad
    \text{if}\quad v\cdot\tau\neq0.
  \end{equation*}
\item [\bf (R)] For any $b\in\Latt$, $\tau\mapsto A(b,\tau)$ is smooth and
  bounded on $\Sph^2$, and $(\tau,v)\mapsto \psi(b,\tau,v)$ is smooth on
  the set $\{v\cdot\tau\in\Sph^2\times\R^3:v\cdot\tau\neq0\}$.
\item[\bf (G)] There exists $\beta>0$ such that
  \begin{equation*}
    \quad \psi(b,\tau,v)\geq \tfrac12\beta|v|^2\qquad
    \text{for all }(b,\tau,f)\in\Latt\times \Sph^2\times\R^3.
  \end{equation*}
\end{itemize}
As remarked in the previous section, the assumptions above are broad enough to encode a wide variety of modelling assumptions made when
modelling dislocation dynamics:
\begin{itemize}
\item The convexity assumptions {\bf(C$_1$)} and {\bf(C$_2$)} imply that dissipation
  potential is always positive and increases as the dislocation velocity or rate of bending
  increases, and no change in the energy dissipated is produced if dislocations do not deform or translate
  within the material. The fact that $\psi(b,\tau,v)=+\infty$ if $v\cdot\tau\neq 0$
  enforces the requirement that meaningful dislocation velocity fields must be locally
  perpendicular to the dislocation line.
\item The regularity assumption {\bf(R)} ensures that energy dissipation rate varies
  smoothly with the velocity and orientation of the dislocation line.
\item The growth assumption {\bf(G)} is a technical assumption, and is not particularly
  restrictive, however relaxing it would make some aspects of our analysis more technical.
\end{itemize}
On the other hand, the choice to make the dissipation depend upon $\nabla_\tau v$ as well as $v$
appears to be a novel addition to DDD. It does not seem to be unreasonable
to require that energy is dissipated by the bending or stretching of dislocation lines, and it
this additional term gives us control of the bending rate, ruling out the
generation of singularities similar to those shown in the figure
and allowing us to prove that the evolution is well--posed.
It would be of great interest to understand whether this term is indeed
physically--justified via a future computational study of a realistic model, or indeed whether this
additional dependence can be mathematically removed via (for example) a vanishing viscosity
argument.

As an indicative example of a constitutive relation satisfying these assumptions which is closely
related to a dislocation mobility which is already present in the literature, we may define
\begin{equation*}
  \Psi(S,v) = \int_S \tfrac12 \alpha |\nabla_\tau v|^2
  +\psi(b,\tau,v)\,\dd\Haus^1(s),
\end{equation*}
so that $A(b,\tau)= \alpha\,\Id$, and where we set
\begin{gather*}
\psi(b,\tau,v) = 
  \begin{cases}
    \tfrac12v^TB^\dagger(b,\tau)v & v\cdot\tau=0\\
    +\infty & v\cdot\tau\neq0,
  \end{cases}
  \quad\text{with}\\
  B(b,\tau) = \bigg(\frac{|b\wedge\tau|^2}{B_{eg}^2}+\frac{(b\cdot\tau)^2}{B_s^2}\bigg)^{-\frac12}\frac{\Proj(\tau)b\otimes \Proj(\tau)b}{|b\wedge\tau|^2}+
  \sqrt{B^2_{ec}|b\wedge\tau|^2+B_s^2(b\cdot\tau)^2}\frac{(b\wedge\tau)\otimes (b\wedge\tau)}{|b|^2|b\wedge\tau|^2}.
\end{gather*}
Here, $B^\dagger$ denotes the Moore--Penrose pseudo--inverse of $B$; in this case, this is simply the
matrix which has the same eigenspaces as $B$ and inverts any non--zero eigenvalues.
$B_{eg}$, $B_{ec}$, $B_s>0$ are all mobility parameters describing dissipative timescales resulting from various different modes of motion
(bending and stretching, glide of edge dislocations, climb of edge dislocations and
glide of screw dislocations), and $\alpha^{-1}>0$ is the energy dissipation rate per 
unit additional area swept out per unit length of dislocation.

Defining $\psi^*(b,\tau,f) = \sup\{v\cdot f-\psi(b,\tau,v)\sep v\in\R^3\}$, i.e. the
Legendre--Fenchel transform of $\psi$, we find that
\begin{equation*}
  \psi^*(b,\tau,f) = \tfrac12 \b(f,B(b,\tau)f\b),
\end{equation*}
and we note that $v = B(b,\tau)f = -\nabla\psi^*(b,\tau,-f)$ is the mobility relation for
dislocations in BCC defined in equation~(10.40) of \cite{BC06}. We therefore see that if
$\alpha=0$, the model as defined above reduces to that considered in \cite{BC06}.

We remark that as a consequence of assumption {\bf (C$_2$)},
we have the following characterisation of the subgradient of $\psi$:
\begin{equation}\label{eq:Subgradient_psi^*}
  \partial_v\psi(b,\tau,v) = \begin{cases}
    \emptyset & v\cdot\tau\neq 0,\\
    \{D_\tau^\perp\psi(b,\tau,v)\} &v\cdot\tau=0,
  \end{cases}
\end{equation}
where $D_\tau^\perp\psi(b,\tau,v)$ means the gradient of $\psi$ taken in directions perpendicular
to $\tau$.

\subsection{Evolution problem}
When viewed as a function defined on $H^1(S;\R^3)$, $\Psi(S,\cdot)$ has a well--defined
subdifferential with respect to $v$, $\partial_v\Psi(S,v)\subset H^1(S,\R^3)^*$,
and recalling the definition of $D_\tau^\perp\psi$ made above,
$\xi\in\partial_v\Psi(S,v)$ implies that
\begin{equation*}
  \<\xi,w\> = \int_S \nabla_\tau v\cdot A(b,\tau)\nabla_\tau w
  +D_\tau^\perp\psi(b,\tau,v) \cdot w\,\dd\Haus^1
\end{equation*}
as long as $v\cdot\tau = 0$ $\Haus^1$--almost everywhere on $S$ (otherwise,
$\partial_v\Psi(S,v)=\emptyset$). As shorthand for the formula above, we will write
\begin{equation*}
  \partial_v\Psi(S,v) = -\div_\tau\b[A(b,\tau)\nabla_\tau v\b]+ D_\tau^\perp\psi(b,\tau,v).
\end{equation*}
This expression corresponds to the frictional force induced on the dislocation configuration by
moving according to the velocity field $v$. In the overdamped regime where inertial effects
are neglected, these frictional forces balance with the configurational forces,
so that $v$ must satisfy
\begin{equation}\label{eq:StrongForm}
  -\div_\tau\b[A(b,\tau)\nabla_\tau v\b]+ D_\tau^\perp\psi(b,\tau,v)= f^{\mathrm{PK}}\quad\text{in }H^1(S;\R^3)^*.
\end{equation}
It is straightforward to check that, since $\Psi$ is convex on $H^1(S;\R^3)$, 
an equivalent criterion is to require that $v$ solves the minimisation problem
\begin{equation}\label{eq:TstepMinProb}
  v\in\argmin_{v\in H^1(S;\R^3)} \Psi(S,v)+\<D\Phi^\eps(S),v\> = \argmin_{v\in H^1(S;\R^3)} \Psi(S,v)-(\PK,v)_{L^2}.
\end{equation}

The force balance equation \eqref{eq:StrongForm} is \emph{Eulerian} in nature: it must be satisfied
on the dislocations themselves as they deform. For the purpose of proving existence of
solutions, we now cast an alternative \emph{Lagrangian} formulation for a solution of DDD:
we suppose that the position of points on the dislocation line at time $t$ is expressed as a
function of time and position on the dislocation line at the initial time.
In the language of geometry, this idea is expressed as a
\emph{pushforward by $U$} (recall \S\ref{sec:pushforward-pullback}), where
$U:[0,T]\times S^0\to\R^3$.
Writing $U(t)$ to denote the mapping at time $t$, we require that
\begin{equation*}
  U(0) = \id\qquad\text{on }S^0,
\end{equation*}
and the `trajectory' of currents is then
\begin{equation*}
  S^t:=U(0)_\#S^0\qquad\text{for all }t\in[0,T].
\end{equation*}
In this formulation, we see that $U$ directly identifies the family of currents
$\{S^t\}\subset\Adm$. Thanks to \S4.1.14 of \cite{Federer}, $S^t$ is well--defined as a current
as long as $U^t$ is a Lipschitz map on $S^0$.

We will find that an appropriate `energy space' in which to seek to prove the existence of $U$ is
$H^1([0,T];H^1(S^0;\R^3))$, i.e. the space of $L^2$ Bochner--integrable functions from $[0,T]$ in
$H^1(S^0;\R^3)$ with $L^2$ Bochner--integrable weak derivatives; for further
detail on the definition of such spaces, see for example Chapter~7 of
\cite{Roubicek}, and we denote the weak time derivative of $U(t)$ as  $\dot{U}(t)$.

\emph{A priori}, there is no guarantee that a generic $U(t)\in H^1(S^0;\R^3)$ is
Lipschitz, and therefore no guarantee that $U(t)_\# S^0$ is a current.
In order to ensure $S^t\in\Adm$ for all time, we additionally require that
$U(t)\in\CC^{0,1}(S^0;\R^3)$.
We will say that a pair $(U,\{v^t\}_{t\in[0,T]})$ form a \emph{solution of DDD} as formulated
in \eqref{eq:StrongForm} if
$U\in H^1([0,T];H^1(S^0;\R^3))$, $U(t)\in C^{0,1}(S^0;\R^3)$ and $v^t\in H^1(U(t)_\#S^0;\R^3)$ for almost every $t\in[0,T]$, and
\begin{equation}\label{eq:DDD}
  \begin{aligned}
  \partial_v\Psi\b(U(t)_\#S^0,v^t\b) &\ni \PK\b(U(t)_\#S^0\b) &&\text{in }H^1\b(U(t)_\# S^0;\R^3\b)^*\\
  \quad\text{and}\quad U(t)^\#v^t&=\dot{U}(t)&&\text{in }H^1(S^0;\R^3)
\end{aligned}
\end{equation}
for almost every $t\in[0,T]$, where the definitions of pushforward $U(t)_\#$ and pullback $U(t)^\#$
are given in \S\ref{sec:pushforward-pullback}.
Now that we have given a definition of what it means to be a solution to DDD,
we have the following well--posedness result.

\begin{theorem}\label{th:WellPosedness}
  If $S^0\in\Adm$ is a finite union of $C^{1,\gamma}$ curves with $\gamma\in(0,1]$, and satisfies
  $\MRatio(S^0)<+\infty$, then there exists a unique solution $(U,\{v^t\})$ satisfying
  \eqref{eq:DDD} for $t\in[0,T]$ where
  \begin{equation*}
    T:=\sup\b\{t\in\R:\MRatio(U(s)_\#S^0)<+\infty\text{ for all }s\leq t\b\}.
  \end{equation*}
  Moreover, $U\in C^0\b([0,T];C^1(S^0;\R^3)\b)$.
\end{theorem}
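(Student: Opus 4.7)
I would prove Theorem~\ref{th:WellPosedness} by constructing the pair $(U,\{v^t\})$ via a discrete--in--time incremental minimisation scheme adapted to the Lagrangian formulation \eqref{eq:DDD}, and then passing to the limit in the step size. The analysis takes place in the time--dependent spaces $H^1(S^t;\R^3)$, with the flow $U(t):S^0\to\R^3$ as the principal unknown; the Eulerian velocity $v^t$ is recovered from the pullback relation $U(t)^\#v^t=\dot U(t)$.

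\textbf{Incremental scheme and a priori bounds.} Fix $\tau>0$, set $U^0_\tau=\id$, and inductively, given $U^n_\tau$ with $S^n_\tau:=(U^n_\tau)_\#S^0$, define $v^n_\tau$ as the unique minimiser of \eqref{eq:TstepMinProb} on $S^n_\tau$, then set $V^n_\tau:=(U^n_\tau)^\#v^n_\tau$ and $U^{n+1}_\tau:=U^n_\tau+\tau V^n_\tau$. Existence and uniqueness of $v^n_\tau$ follow from the strict convexity and coercivity of $\Psi(S^n_\tau,\cdot)$ on $H^1(S^n_\tau;\R^3)$ granted by {\bf(C$_1$)}, {\bf(C$_2$)} and {\bf(G)}, combined with the bound \eqref{eq:PKLInfty} which makes the linear term $v\mapsto(\PK(S^n_\tau),v)_{L^2}$ continuous on $H^1(S^n_\tau;\R^3)$. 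Testing the Euler--Lagrange equation with $v^n_\tau$ and using the convex duality \eqref{eq:ConConjRel} produces a discrete energy--dissipation inequality which, once summed and combined with \eqref{eq:PKLInfty} and {\bf(G)}, yields a uniform $L^2_t H^1_x$ bound on the piecewise--affine interpolant $U_\tau$. The one--dimensional Sobolev embedding $H^1\hookrightarrow C^{0,1/2}$ on the support of $S^n_\tau$ then gives a uniform H\"older bound on $U_\tau$, and hence control of both $\MRatio(S^n_\tau)$ and the Lipschitz constant of $U^n_\tau$ on any compact subinterval $[0,T']\subset[0,T)$.

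\textbf{Passage to the limit, uniqueness and regularity.} The uniform bounds yield, along a subsequence, weak convergence of $U_\tau$ in $H^1([0,T'];H^1(S^0;\R^3))$ and, via Aubin--Lions, strong convergence in $C^0([0,T'];L^2(S^0;\R^3))$. The continuity estimate \eqref{eq:PKContinuity} transfers to convergence of $\PK(S^t_\tau)$ along the sequence, while lower semicontinuity of $\Psi$ allows passage to the limit in the weak form of \eqref{eq:StrongForm}, producing a solution on $[0,T']$; letting $T'\uparrow T$ yields a solution on $[0,T]$ by the definition of $T$. Uniqueness follows by subtracting two solutions, testing with the difference of the associated velocities, and applying Gronwall's lemma, exploiting the strong convexity of $\Psi$ in $v$ and the continuity of $\PK$ in the current configuration from \eqref{eq:PKContinuity}. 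The regularity $U\in C^0([0,T];C^1(S^0;\R^3))$ then comes from the $C^{1,\gamma}$ assumption on $S^0$, the smoothness of $\Kker$ (Theorem~\ref{th:Energy}) and of $A,\psi$ ({\bf(R)}), via elliptic regularity for $-\div_\tau[A(b,\tau)\nabla_\tau\,\cdot\,]$ along each curve component, followed by Cauchy--Lipschitz applied to $\dot U=v\circ U$.

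\textbf{Main obstacle.} The chief difficulty is maintaining the coupling between the Lagrangian and Eulerian pictures throughout the scheme: at each step one must verify that $U^n_\tau$ remains Lipschitz on $S^0$ so that $(U^n_\tau)_\#S^0$ is an admissible current, and that $\MRatio(S^n_\tau)$ stays bounded so that \eqref{eq:PKLInfty} delivers a uniform driving force. This is precisely the reason the blow--up criterion in the statement is sharp: the only mechanism for the scheme to fail at a finite time is an infinite concentration of dislocation line, which is exactly the condition $\MRatio\to+\infty$.
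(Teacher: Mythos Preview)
Your overall architecture (explicit time--stepping, a priori bounds, compactness, limit) matches the paper's, but there is a genuine gap in how you obtain the spatial regularity that the whole iteration rests on.

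You propose to control $U_\tau$ via the one--dimensional embedding $H^1\hookrightarrow C^{0,1/2}$. This gives only $C^{0,1/2}$ control on $v^n_\tau$ and hence on $U^{n+1}_\tau$, which is \emph{not} Lipschitz. Two things then fail simultaneously. First, without $U^n_\tau$ Lipschitz you cannot assert that $(U^n_\tau)_\#S^0$ is an integral current, so the next step of the scheme is not even defined. Second, your claimed control on $\MRatio(S^n_\tau)$ does not follow: the mass ratio of a pushforward is governed by the bi--Lipschitz constant of the map (cf.\ the argument leading to \eqref{eq:MassRatioGrowth} in the paper), and a $C^{0,1/2}$ bound says nothing about that constant. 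Since \eqref{eq:PKLInfty} is the source of all your force bounds and it depends on $\MRatio$, the whole chain of a priori estimates breaks.

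The paper closes this gap by proving, at every discrete step, full $C^{1,\gamma}$ regularity of $v^i$ with explicit $L^\infty$ bounds on $v^i$, $\nabla_\tau v^i$ and $[\nabla_\tau v^i]_\gamma$ in terms of $\Mass(S^i)$ and $\MRatio(S^i)$ (Lemma~\ref{th:Regularity}). This is done by pulling the Euler--Lagrange equation back to an interval and treating it as a first--order ODE system for $(V,\sigma)$ with $\sigma=A(b,\tau)\nabla_\tau v$, then bootstrapping. These $C^{1,\gamma}$ bounds are what make $U^{n+1}_\tau$ Lipschitz, feed an ODE comparison argument for the growth of $\Mass$ and $\MRatio$ (Lemma~\ref{th:MassMRatioGrowthBounds}), and give compactness directly in $C^0_tC^1_x$ via Arzel\`a--Ascoli rather than Aubin--Lions in $C^0_tL^2_x$. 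Your ``elliptic regularity'' step at the very end is therefore needed much earlier and much more quantitatively: it is the engine of the iteration, not a finishing touch. Likewise, the extension from a short interval to the maximal time $T$ uses a separate estimate (Lemma~\ref{th:MassGrowthBound}) showing that $\Mass$ stays finite as long as $\MRatio$ does; this is not automatic from the energy--dissipation inequality alone.
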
\medskip

\noindent
We note that a strength of this result is that it allows for dislocation collision, since there is
no requirement that $U$ is invertible, although as currently phrased in a Lagrangian form, it
is not clear that dislocations satisfy the correct evolution if they merge. Indeed, in practical
simulations of DDD, dislocations are remeshed exactly as dislocation
segments approach separation distances of $O(\eps)$, as discussed in \S10.4 of
\cite{BC06}. It would be of great interest to understand how best to correctly incorporate this
phenomenon into a mathematical theory of DDD in future.

Generically, we expect $\MRatio(U(t)_\#S^0)<+\infty$ for all time, since the contrary would require
a concentration of internal energy on a small set; while at present we are unable to rule out the
possibility of this occurring, it would be interesting in future to confirm existence for all time
for at least a large class of initial data.

\subsection{Conclusion}
We have provided a framework in which to study a regularised form of DDD in three
dimensions, inspired by various ideas in both the Engineering and Mathematics literature
\cite{BBS80,M91,CAWB06,CGM15,CGO15,SvG16a}. Computable integral formulae for the energy
of and configurational forces on a general dislocation configuration were derived,
and bounds on the configurational force which depend weakly on the overall length of dislocation
were obtained. A gradient flow formalism in which to study DDD was proposed,
and within this framework, we obtained a well--posedness result for the
evolution up until the first time an infinite density of dislocations develops.

It is hoped that the energetic framework developed here is sufficiently general to open 
the way to new upscaling results such as those in \cite{P07,GLP10,DLGP12,SZ12,MSZ14,MSZ15}
in a three--dimensional setting, to build upon the results of \cite{CGO15} in the case where
the regularisation lengthscale $\eps$ tends to zero, and to allow the mathematical study of the
numerical schemes used in practical implementations of DDD.

\section{The energy of dislocations}
\label{sec:elast-energy-disl}

Our aim in this section is to prove Theorem~\ref{th:Energy}, providing a characterisation of the
energy of dislocations.

\subsection{Minimisation problem}
\label{sec:elast-resp-slip}
Our first step towards proving Theorem~\ref{th:Energy} is to characterise the
solution to the minimisation problem \eqref{eq:SlipEnergy}.

\begin{lemma}
  \label{th:DistortionCharacterisation}
  Assuming that $\Elas$ satisfies a Legendre--Hadamard condition, $T\in\Itwo(\R^3;\Latt)$ and
  $z^\eps_T$ is as defined in \eqref{eq:zepsDef},
  the variational problem in \eqref{eq:SlipEnergy} has a unique solution, which is smooth and satisfies the equation
  \begin{equation}\label{eq:ElasticResp}
    -\Elas_{ijkl}u^\eps_{k,lj}=\Elas_{ipqr}(z^\eps_T)_{qr,p},
  \end{equation}
  in the sense of distributions. Moreover, the solution may be represented as
  \begin{equation}\label{eq:uRep}
    u^\eps_\alpha(x) = \int_{\R^3}\Elas_{ijkl}\GMat_{\alpha i,j}(x-y)(z^\eps_T)_{kl}(y)\dd y
    = \int_{\Sigma}\Elas_{ijkl}\GMat^\eps_{\alpha i,j}(x-y)b_k(s)\nu_l(s)\dd\Haus^2(s),
  \end{equation}
  where $\GMat$ is the \emph{elastic Green's function}, which, recalling
  \S\ref{sec:notation-conventions}, is the distributional solution of \eqref{eq:GMatEqn},
  and the function $\GMat^\eps:=\GMat*\varphi^\eps$ is a regularised version of $\GMat$, solving
  \eqref{eq:GMatEpsEqn}.
\end{lemma}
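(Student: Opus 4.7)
The plan is to verify that \eqref{eq:SlipEnergy} is a well-posed strictly convex variational problem on $\dot{H}^1(\R^3)$, derive the Euler--Lagrange equation as \eqref{eq:ElasticResp}, and then invert the resulting elliptic system using the elastic Green's function.

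First I would verify that $z^\eps_T\in L^2(\R^3;\R^{3\times3})$: since $T$ is a compactly supported integral $2$-current of finite mass and $\varphi^\eps$ is Schwartz, the defining formula \eqref{eq:zepsDef} produces a smooth field with rapid decay. Expanding $\I(z^\eps_T+Du^\eps)$ gives a constant, a bounded linear functional of $Du^\eps$, and the quadratic form $\tfrac12\int\Elas_{ijkl}u^\eps_{i,j}u^\eps_{k,l}\dx$; Plancherel's theorem converts the latter into $\tfrac12(2\pi)^{-3}\int\Elas_{ijkl}k_jk_l\widehat{u^\eps_i}\overline{\widehat{u^\eps_k}}\,\dd k$, which is bounded below by $\tfrac{c_0}{2}\|\nabla u^\eps\|_{L^2}^2$ by the pointwise Legendre--Hadamard condition. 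Strict convexity together with this coercivity in the $\dot{H}^1$ seminorm then delivers a unique minimiser by the direct method. Setting the first variation to zero against any $v\in C^\infty_c(\R^3;\R^3)$ and exploiting the minor symmetry of $\Elas$ to combine the two terms yields $\int\Elas_{ijkl}(z^\eps_T+Du^\eps)_{ij}v_{k,l}\dx=0$, which after an integration by parts is exactly \eqref{eq:ElasticResp} in the distributional sense, up to a relabelling of indices permitted by the major symmetry of $\Elas$.

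To obtain \eqref{eq:uRep}, I would convolve the source $\Elas_{ipqr}(z^\eps_T)_{qr,p}$ against $\GMat_{\alpha i}$, which by \eqref{eq:GMatEqn} inverts the elasticity operator on the left-hand side. Transferring the $y_p$-derivative onto $\GMat$ via integration by parts (no boundary terms thanks to the rapid decay of $z^\eps_T$), then expanding $z^\eps_T$ through \eqref{eq:zepsDef} and using Fubini to interchange the surface and volume integrals, converts $\GMat*\varphi^\eps$ into $\GMat^\eps$ and collapses the volume integral onto $\Sigma$. Smoothness of $u^\eps$ then follows immediately since $\GMat^\eps$ is smooth, being the convolution of the tempered elastic Green's function with a Schwartz function, so the surface integral may be differentiated arbitrarily often under the integral sign.

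The hard part will be establishing coercivity on the whole space from Legendre--Hadamard ellipticity alone. I do not anticipate a serious obstacle --- the Fourier representation turns the pointwise inequality directly into the global bound --- but some care is needed in treating $\dot{H}^1(\R^3)$ as a space of equivalence classes so that the direct method applies cleanly, and in justifying the Fubini step in the representation formula via the integrability of $\varphi^\eps$ together with finiteness of $\Mass(T)$.
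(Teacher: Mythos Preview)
Your proposal is correct and follows essentially the same overall arc as the paper: existence via the direct method from Legendre--Hadamard ellipticity, the Euler--Lagrange equation from the first variation, and the representation formula by convolving against $\GMat$, integrating by parts, and applying Fubini to pass from $\GMat*\varphi^\eps$ to $\GMat^\eps$ on $\Sigma$.

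The one genuine difference is in how well-posedness is obtained. The paper invokes the equivalence of rank-one convexity and quasiconvexity for quadratic integrands (Theorem~5.25 in Dacorogna) to get weak lower semicontinuity of $\I$ on $\dot{H}^1(\R^3)$. You instead go through Plancherel to turn the quadratic form into $\tfrac12(2\pi)^{-3}\int \Elas_{ijkl}k_jk_l\widehat{u_i}\overline{\widehat{u_k}}\,\dd k$ and apply the Legendre--Hadamard inequality pointwise in $k$, which yields the stronger conclusion $\I(Du)\geq \tfrac{c_0}{2}\|\nabla u\|_{L^2}^2$ directly. Your route is more self-contained and gives explicit coercivity rather than just weak lower semicontinuity, at the cost of being specific to the whole-space constant-coefficient setting; the paper's route is softer but imports a nontrivial result. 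Both are perfectly adequate here, and the remainder of your argument (the Euler--Lagrange computation, the Green's function inversion, the Fubini step, and smoothness from the $\GMat^\eps$ representation) matches the paper's proof closely.
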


\begin{proof}
  The existence of a minimiser follows from the fact that by Theorem~5.25 in \cite{Dacorogna}, any
  quadratic function is quasiconvex if and only if it is rank--one convex, and in this case
  rank--one convexity of the integrand is straightforward to check, following from the assumption
  that $\Elas$ satisfies a Legendre--Hadamard condition.
  As a consequence, $\I$ is weakly lower semicontinuous on $\dot{H}^1(\R^3)$ and unique minimisers
  exist in this space; computing the Frechet derivative of $\I$ in the same space shows that the
  solution satisfies the equation \eqref{eq:ElasticResp} in the sense of distributions.

  Convolving the distributional equation \eqref{eq:GMatEqn} with $f_\beta$ and contracting the index $\beta$,
  we see that
  \begin{equation*}
    -\Elas_{\alpha jkl}\GMat_{\beta k} *f_{\beta,lj} = f_\alpha.
  \end{equation*}
  Setting $f_\alpha = \Elas_{\alpha pqr}(z^\eps_T)_{qr,p}$, we find that
  \begin{equation*}
    u_\alpha^\eps = \Elas_{\beta pqr}\GMat_{\beta \alpha} * (z^\eps_T)_{qr,p}.
  \end{equation*}
  Integrating by parts to move the derivative with respect to $x_j$ onto $\GMat$,
  then using the fact that $\GMat$ is a symmetric tensor, i.e. $\GMat_{ij}=\GMat_{ji}$ for any
  $i,j\in\{1,2,3\}$, we obtain the first equality in \eqref{eq:uRep}. The second equality
  follows by using Fubini's theorem to deduce that
  \begin{equation*}
    \GMat*\B(\varphi^\eps*\b(b\otimes\nu \Haus^2\Rstr\Sigma\b)\B) = \big(\GMat*\varphi^\eps\big)*\big(b\otimes\nu \Haus^2\Rstr\Sigma\big) = \GMat^\eps*\big(b\otimes\nu \Haus^2\Rstr_\Sigma\big).\qedhere
  \end{equation*}
\end{proof}

We remark that while Lemma~\ref{th:DistortionCharacterisation} establishes that the problem
\eqref{eq:SlipEnergy} has a unique solution $u^\eps$, it does not guarantee the positivity of
the energy $\mathcal{I}(z^\eps+Du^\eps)$, since we cannot say anything
about the sign of $\mathcal{I}(z^\eps)$ unless $z^\eps$ is a gradient.
We also note that under appropriate growth and quasiconvexity conditions, the existence of $u^\eps$
can be ensured if the stored energy density takes a more general nonlinear form; related ideas
are discussed in \cite{SZ12,MSZ14,MSZ15} in a two--dimensional setting.

\subsection{Energy}
A key feature of the linear theory we consider is that it allows the derivation of a
representation formula for the distortion due to a configuration of slip, which leads to
the following result, allowing us to provide an explicit integral formula for the internal energy
\eqref{eq:SlipEnergy}.

\begin{lemma}\label{th:EnergyIntegralFormula}
  If $T\in\Itwo(\R^3;\Latt)$, supported on $\Sigma$, with slip vector $b$ and normal field $\nu$,
  the energy $\E^\eps$ defined in \eqref{eq:SlipEnergy} may be represented
  \begin{equation}
    \label{eq:EnergySSForm}
    \begin{aligned}
      \E^\eps(T)
        &= \int_{\R^3}\int_{\Sigma\times\Sigma}\smfrac12\Elas_{abcd}
        \Alt_{bpl}\Elas_{ijkl}b_k(s)\GMat^\eps_{a i,jn}(x-s)\Alt_{pmn}\nu_m(s)\\
        &\hspace{30mm}\times\Alt_{dqh}\Elas_{efgh}b_g(t)\GMat^\eps_{c e,fs}(x-t)\Alt_{qrs}\nu_r(t)
        \,\dd(\Haus^2\otimes\Haus^2)(s,t)\dx
      \end{aligned}
    \end{equation}
  where $\Alt$ is the alternating tensor as defined in \S\ref{sec:notation-conventions}.
  
  Furthermore, \eqref{eq:EnergySSForm} depends only on $\partial T$, which entails that $\Phi^\eps:\Adm\to\R$ with
\begin{equation*}
  \Phi^\eps(S):=\E^\eps(T)\quad\text{for any }S\in\Adm\text{ where }S=\partial T
\end{equation*}
is well--defined, and if $S=\partial T$ is supported on $\Gamma$, with Burgers vector $b$ and
tangent field $\tau$, $\Phi^\eps(S)$ may be expressed as
\begin{equation}\label{eq:PhiDefinition}
  \begin{aligned}
  \Phi^\eps(S)
        &=\int_{\R^3}\int_{\Gamma\times \Gamma}\smfrac12\Elas_{abcd}
        \Alt_{bpl}\Elas_{ijkl}b_k(s)\GMat^\eps_{a i,j}(x-s)\tau_p(s)\\
        &\hspace{30mm}\times\Alt_{dqh}\Elas_{efgh}b_g(t)\GMat^\eps_{c e,f}(x-t)\tau_q(t)
        \,\dd(\Haus^1\otimes\Haus^1)(s,t)\dx.
      \end{aligned}
    \end{equation}
\end{lemma}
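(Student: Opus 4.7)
The proof splits into two parts: first, derive the surface-integral formula \eqref{eq:EnergySSForm} by substituting the Green's function representation from Lemma~\ref{th:DistortionCharacterisation} into the energy and combining terms; second, apply Stokes' theorem (in the form of current-duality) to convert the surface integrals to line integrals, which simultaneously establishes that $\E^\eps(T)$ depends only on $\partial T = S$ and produces \eqref{eq:PhiDefinition}.

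For the first part, I start from $\E^\eps(T) = \tfrac12\int_{\R^3}\beta^\eps_{ab}(x)\Elas_{abcd}\beta^\eps_{cd}(x)\dx$ with $\beta^\eps = z^\eps_T+Du^\eps$, and aim to express $\beta^\eps$ as a single $\Sigma$-integral whose kernel matches the $\Alt$-wrapped structure in \eqref{eq:EnergySSForm}. The representation \eqref{eq:uRep} gives $u^\eps_{a,b}$ directly as a $\GMat^\eps$-kernel integral; the regularised plastic distortion $(z^\eps_T)_{ab}$ can be recast in $\GMat^\eps$ form using the identity
\begin{equation*}
  \Elas_{ijkl}\GMat^\eps_{ai,jl}(y) = -\delta_{ak}\varphi^\eps(y),
\end{equation*}
which follows from the distributional equation \eqref{eq:GMatEpsEqn} satisfied by $\GMat^\eps$, together with the major and minor symmetries of $\Elas$. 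These two contributions can then be combined via the vector identity $\Alt_{bpl}\Alt_{pmn} = \delta_{bn}\delta_{lm} - \delta_{bm}\delta_{ln}$, yielding
\begin{equation*}
  \beta^\eps_{ab}(x) = \int_\Sigma b_k(s)\Elas_{ijkl}\Alt_{bpl}\Alt_{pmn}\nu_m(s)\GMat^\eps_{ai,jn}(x-s)\dd\Haus^2(s).
\end{equation*}
Substituting this back into the energy and applying Fubini's theorem (justified since $\GMat^\eps$ and its derivatives are smooth and suitably decaying) produces \eqref{eq:EnergySSForm}.

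For the second part, each surface integral in \eqref{eq:EnergySSForm} has the structure $\int_\Sigma \Alt_{pmn}\nu_m(s)\GMat^\eps_{ai,jn}(x-s)\dd\Haus^2(s)$, and the chain rule identity $\GMat^\eps_{ai,jn}(x-s) = -\partial_{s_n}\GMat^\eps_{ai,j}(x-s)$ converts this to $-\int_\Sigma \Alt_{pmn}\nu_m(s)\partial_{s_n}\GMat^\eps_{ai,j}(x-s)\dd\Haus^2$. This is exactly in the form where Stokes' theorem, equivalently the current-duality $\<T,d\phi\> = \<\partial T,\phi\>$ applied to a 1-form $\phi$ with components built from $\GMat^\eps_{ai,j}(x-\cdot)$, gives $-\int_\Gamma \GMat^\eps_{ai,j}(x-s)\tau_p(s)\dd\Haus^1$. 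Since the Burgers vector $b$ is locally constant on $\Sigma$ (taking values in the discrete set $\Latt$), it is absorbed into the current and reappears on $\Gamma$ unchanged. Applying this identity to both the $s$- and $t$-factors in \eqref{eq:EnergySSForm}, the two minus signs cancel and the resulting double line integral is precisely \eqref{eq:PhiDefinition}. Since this reduction invokes only $\partial T$ and not $T$ itself, $\Phi^\eps$ is well-defined on $\Adm$.

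The main obstacle is making the Stokes step rigorous in the current-theoretic setting, since $\Sigma$ need not be a smooth oriented surface with a smooth boundary. The appropriate route is to recognise the surface integrand as $\<T,d\phi\>$ for an explicitly constructed 1-form $\phi$, after which the boundary identity $\<T,d\phi\> = \<\partial T,\phi\>$ applies verbatim without any regularity hypothesis on $\Sigma$. The remaining technical effort consists of careful index bookkeeping, in particular tracking signs introduced by differentiating $\GMat^\eps(x-s)$ in $s$ versus in its own argument, and verifying the $\Elas$-symmetry manipulations behind the contraction identity; these are tedious but essentially routine.
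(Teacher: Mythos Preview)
Your proposal is correct and follows essentially the same route as the paper: rewrite both $Du^\eps$ and $z^\eps_T$ as $\GMat^\eps$-kernel integrals over $\Sigma$, combine them via the $\Alt_{bpl}\Alt_{pmn}$ identity into a single expression for $\beta^\eps_{ab}$, substitute into $\I$, and then apply Stokes' theorem to pass from $\Sigma$ to $\Gamma=\partial\Sigma$. The only cosmetic difference is that the paper carries $\GMat$ and $\varphi^\eps$ separately through the convolution and merges them into $\GMat^\eps$ via Fubini just before the Stokes step, whereas you work with $\GMat^\eps$ from the outset; your framing of the Stokes step as the current identity $\langle T,d\phi\rangle=\langle\partial T,\phi\rangle$ is in fact slightly more careful than the paper's one-line invocation.
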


\begin{proof}
  Since $T$ is fixed during this proof, throughout, we write $z^\eps$ in place of
  $z^\eps_T$ to keep notation as concise as possible.
  Applying \eqref{eq:uRep}, we write the elastic distortion $\beta^\eps$  as
\begin{equation}\label{eq:betaRep1}
  \beta^{\eps}_{ab}= u^\eps_{a,b}+z^\eps_{ab}= \Elas_{ijkl}\GMat_{a i,j}*z^\eps_{kl,b}
  +z^\eps_{ab}.
\end{equation}
Using the definition of the elastic Green's function given in \eqref{eq:GMatEqn} with a change of indices,
integration by parts, and the major symmetry of the elasticity tensor,
\begin{equation*}
  z^\eps_{ab} = \Id_{ka}\delta_0 *z^\eps_{kb}
  = - \Elas_{klij}\GMat_{ai,jl}*z^\eps_{kb} 
  = - \Elas_{klij}\GMat_{ai,jl}*z^\eps_{kb,l}
  = - \Elas_{ijkl}\GMat_{ai,j}*z^\eps_{kb,l}.
\end{equation*}
Substituting this representation into \eqref{eq:betaRep1} in place of the latter term, we obtain
\begin{align*}
  \beta^{\eps}_{ab}(x) &= \int_{\R^3}\Elas_{ijkl}\B[\GMat_{a i,j}(x-y)z^\eps_{kl,b}(y)
                         -\GMat_{ai,j}(x-y)z^\eps_{kb,l}(y)\B]\dd y,\\
                       &=\int_{\R^3}\Elas_{ijkl}\GMat_{a i,j}(x-y)[\Id_{lm}\Id_{bn}
                         -\Id_{bm}\Id_{ln}] z^\eps_{km,n}(y)\dd y,\\
                       &=\int_{\R^3}\Alt_{plb}\Elas_{ijkl}\GMat_{a i,j}(x-y)\Alt_{pmn}z^\eps_{km,n}(y)\dd y,
\end{align*}
where we have used the elementary tensor identity $\Alt_{pmn}\Alt_{plb}=\Id_{lm}\Id_{bn}-\Id_{bm}\Id_{ln}$.
Using the definition of $z^\eps$ given in \eqref{eq:zepsDef}, we have
\begin{equation*}
  \beta^{\eps}_{ab}(x)
  =\int_{\R^3}\Alt_{plb}\Elas_{ijkl}b_k\GMat_{a i,j}(x-y)\int_{\Sigma}\Alt_{pmn}\nu_m(s)\varphi^\eps_{,n}(y-s)
  \dd\Haus^2(s)\,\dd y.
\end{equation*}
where $\tau$ is the tangent vector field on $\partial\Sigma$. If $x\notin\partial\Sigma$, Fubini's
theorem applies to the above integral representation, so using the definition of $\GMat^\eps$
given in \S\ref{sec:notation-conventions} and applying Stokes' Theorem, we find that
\begin{equation*}
  \beta^{\eps}_{ab}(x)
  =\int_{\Sigma}\Alt_{bpl}\Elas_{ijkl}b_k\GMat^\eps_{a i,jn}(x-s)\Alt_{pmn}\nu_m(s)\dd\Haus^2(s)
  =\int_{\partial\Sigma}\Alt_{bpl}\Elas_{ijkl}b_k\GMat^\eps_{a i,j}(x-s)\tau_p(s)\dd\Haus^1(s),
\end{equation*}
where $\tau:\partial\Sigma\to\Sph^2$ is the tangent vector field on $\partial\Sigma$. Substituting the former expression of $\beta^\eps$ into the definition of $\I$ stated in \eqref{eq:Energy} gives
\eqref{eq:EnergySSForm}. Similarly, substituting the latter expression into \eqref{eq:Energy}
allows us to directly deduce that $\Phi^\eps$ is well--defined and has the expression given in
\eqref{eq:PhiDefinition}.
\end{proof}

\subsection{Kernel representation}
\label{sec:repr-kern}
Inspecting the formulae for $\E^\eps$ and $\Phi^\eps$ given in Lemma~\ref{th:EnergyIntegralFormula},
we note that both expressions can be regarded as a convolution integral against an interaction
kernel; this form allows us to use ideas from \cite{BBS80} to prove the following result, which,
when combined with Lemma~\ref{th:DistortionCharacterisation} and
Lemma~\ref{th:EnergyIntegralFormula}, completes the proof of Theorem~\ref{th:Energy}.

\begin{lemma}\label{th:Kernels}
  If $T\in\Itwo(\R^3;\Latt)$ is supported on $\Sigma$, with slip vector $b$ and normal field $\nu$,
  and $S=\partial T$ is supported on $\Gamma$ with Burgers vector $b$ and tangent field $\tau$,
  the energy functionals $\E^\eps(T)$ and $\Phi^\eps(S)$ may be expressed as
  \begin{align*}
    \begin{split}
      \E^\eps(T) &= \int_{\Sigma\times\Sigma}\smfrac12\Jker_{abcd}(s-t)b_a(s)\nu_b(s)b_c(t)\nu_d(t)\dd(\Haus^2\otimes\Haus^2)(s,t)
    \end{split}\\
    \begin{split}
      \Phi^\eps(S)&= \int_{\Gamma\times\Gamma}\,\smfrac12\Kker_{abcd}(s-t)b_a(s)\tau_b(s)b_c(t)\tau_d(t)
      \dd(\Haus^1\otimes\Haus^1)(s,t),
    \end{split}
  \end{align*}
  where the kernels $\Jker,\Kker:\R^3\to\R^{3\times3\times3\times3}$ are defined to be
  \begin{align*}
    \Jker_{kmgr}(s)
    &:=\int_{\R^3}\Elas_{abcd}\Alt_{bpl}\Elas_{ijkl}\GMat^\eps_{a i,jn}(x-s)\Alt_{pmn}
      \Alt_{dqh}\Elas_{efgh}\GMat^\eps_{c e,fs}(x)\Alt_{qrs}\dx,\\
    \Kker_{kpgq}(s)
    &:=\int_{\R^3}\Elas_{abcd}\Alt_{bpl}\Elas_{ijkl}\GMat^\eps_{a i,j}(x-s)\Alt_{dqh}\Elas_{efgh}
      \GMat^\eps_{c e,f}(x)\dx,
  \end{align*}
  and satisfy the following properties.
  \begin{enumerate}
  \item $\Jker_{abcd}(s)=\Jker_{cdab}(s)=\Jker_{abcd}(-s)$ and
    $\Kker_{abcd}(s)=\Kker_{cdab}(s)=\Kker_{abcd}(-s)$ for any $s\in\R^3$;
  \item $\Jker$ and $\Kker$ are smooth.
  \item For any $m\in\N$, $0\leq j\leq m$, and vectors $v_1,\ldots,v_j\in\Sph^2$,
    there exists a constant $C_{m,j}$ such that for all $s\in\R^3$,
    \begin{equation*}
      \B|D^m \Kker(s):\B[v_1,\ldots, v_j,\frac{s}{|s|},\ldots,\frac{s}{|s|}\B]\B|
      \leq \frac{C_{m,j}}{\sqrt{\eps^{2m+2}+\eps^{2j}|s|^{2(m-j)+2}}}.
    \end{equation*}
  \end{enumerate}
\end{lemma}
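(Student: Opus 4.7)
My plan is to prove Lemma~\ref{th:Kernels} in four stages. First, to derive the kernel representations, I would apply Fubini's theorem to the integral expressions for $\E^\eps(T)$ and $\Phi^\eps(S)$ from Lemma~\ref{th:EnergyIntegralFormula}, exchanging the outer $\dd x$-integral with the inner surface or line integrals. The tensors involving $b$, $\nu$, and $\tau$ are independent of $x$ and factor out, and a change of variables $x \mapsto x + t$ exhibits the remaining $\R^3$-integral as a function of $s - t$ alone; this recovers \eqref{eq:KerDefs}.

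For the symmetries in (i), I would use the major symmetry $\Elas_{abcd} = \Elas_{cdab}$ and the identity $\GMat^\eps_{ij} = \GMat^\eps_{ji}$ of the Green's function. The equality $\Kker_{abcd}(s) = \Kker_{cdab}(s)$ then follows by the change of variables $x \mapsto x + s$ together with a relabelling of dummy indices, while $\Kker_{abcd}(-s) = \Kker_{abcd}(s)$ follows from the evenness of $\GMat^\eps$, inherited from the radial symmetry of $\varphi^\eps$ and the fact that $\GMat$ is even as the fundamental solution of a second-order constant-coefficient operator. This makes $D\GMat^\eps$ odd, so the substitution $x \mapsto -x$ produces two sign flips that cancel. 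Identical arguments apply to $\Jker$ modulo the extra $\Alt$ factors.

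Smoothness (ii) is obtained by differentiating under the integral sign. To justify this, I would use the scaling identity
\begin{equation*}
  \GMat^\eps(x) = \eps^{-1}\GMat^1(x/\eps),
\end{equation*}
which follows from a change of variables and the $(-1)$-homogeneity of $\GMat$, together with the pointwise bounds $|D^\alpha \GMat^1(x)| \leq C_\alpha(1+|x|)^{-(|\alpha|+1)}$, a standard consequence of $\GMat$ being smooth and $(-1)$-homogeneous away from the origin and of $\varphi^1$ being Schwartz. These bounds ensure that the integrand of $\Kker$ and each of its $s$-derivatives decay in $x$ fast enough to permit differentiation inside the integral.

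The main obstacle will be the decay estimate (iii). My plan is to exploit the scaling: the identity above gives $\Kker^\eps(s) = \eps^{-1}\Kker^1(s/\eps)$, hence $D^m\Kker^\eps(s) = \eps^{-(m+1)}D^m\Kker^1(s/\eps)$, reducing the problem to the $\eps$-independent estimate $|D^m\Kker^1(t)| \leq C_m(1+|t|)^{-(m+1)}$. This in turn follows from the pointwise bounds on $D^\alpha \GMat^1$ together with the standard convolution estimate
\begin{equation*}
  \int_{\R^3}(1+|x-t|)^{-(m+2)}(1+|x|)^{-2}\,\dd x \leq C(1+|t|)^{-(m+1)},
\end{equation*}
obtained by decomposing $\R^3$ according to which factor of the integrand dominates. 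Scaling back yields $|D^m\Kker^\eps(s)| \leq C/(\eps+|s|)^{m+1}$, and each of the claimed $j$-specialized bounds then follows immediately: since $\eps \leq \eps+|s|$, we have $(\eps+|s|)^{-(m+1)} \leq \eps^{-j}(\eps+|s|)^{-(m-j+1)}$ for any $0 \leq j \leq m$, and the right-hand side is comparable to $1/\sqrt{\eps^{2m+2}+\eps^{2j}|s|^{2m+2-2j}}$.
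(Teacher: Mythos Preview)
Your treatment of the kernel representation and of properties (1)--(2) is fine; these direct real-space arguments differ from the paper's Fourier route but are perfectly valid.

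The gap is in your argument for (3). Your claimed convolution estimate
\[
\int_{\R^3}(1+|x-t|)^{-(m+2)}(1+|x|)^{-2}\,\dd x \leq C(1+|t|)^{-(m+1)}
\]
is false for $m\geq 2$: on the region $\{|x|>|t|/2\}$ one has $(1+|x|)^{-2}\leq C|t|^{-2}$, while $\int_{\R^3}(1+|x-t|)^{-(m+2)}\,\dd x$ is finite for $m\geq 2$, so this region contributes $C|t|^{-2}$, not $|t|^{-(m+1)}$. The obstacle is that $(1+|x|)^{-2}\notin L^1(\R^3)$, so no amount of decay in the other factor can beat $|t|^{-2}$ via pointwise bounds alone; redistributing derivatives by integration by parts does not repair this.

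More importantly, the isotropic bound $|D^m\Kker^1(s)|\leq C(1+|s|)^{-(m+1)}$ you aim for is stronger than what the lemma asserts and is not what the kernel actually satisfies: the statement is genuinely anisotropic, with the decay rate depending on how many of the $m$ derivative directions are radial ($s/|s|$) versus generic. For $j=m$ (all generic directions) the asserted bound is only $C\eps^{-m}(\eps^2+|s|^2)^{-1/2}$, i.e.\ $|s|^{-1}$ decay. The faster $|s|^{-(m+1)}$ decay appears only when all directions are radial ($j=0$). Your scheme, which produces a single isotropic estimate and then discards powers of $(\eps+|s|)$ in favour of $\eps$, cannot detect this directional dependence.

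The paper obtains (3) by a Fourier argument: Plancherel together with the $(-1)$-homogeneity of $k\mapsto k_c\Dsf(k)^{-1}$ yields the spherical representation
\[
\Kker_{abcd}(s)=\int_{\Sph^2}[\text{bounded in }z]\,\eta^\eps(z\cdot s)\,\dd\Haus^2(z),
\]
where $\eta^\eps$ is a one-dimensional Schwartz function built from $|\widehat{\varphi^\eps}|^2$. Each $s$-derivative then brings down a factor of $z$, so $D^m\Kker(s)[v_1,\ldots,v_j,s,\ldots,s]$ acquires $(z\cdot s)^{m-j}$ inside the sphere integral; passing to polar coordinates with axis $s$ and substituting $t=|s|\cos\theta/\eps$ converts these extra powers of $z\cdot s$ into the improved $|s|^{-(m-j+1)}$ decay. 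This mechanism is what your real-space convolution bound misses.
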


\begin{proof}
  We divide the proof into a series of steps, corresponding to each of the assertions made.
  \medskip

  \noindent\emph{Kernel representation.}
  The existence of the kernels is a straightforward consequence of applying Fubini's theorem to
  the expressions \eqref{eq:EnergySSForm} and \eqref{eq:PhiDefinition}; $\Jker(s)$ and $\Kker(s)$
  are finite for
  any $s\in\R^3$ since the elastic Green's function satisfies the standard properties that
  $|\GMat_{ij,k}(x)|\lesssim |x|^{-2}$ and $|\GMat_{ij,kl}(x)|\lesssim |x|^{-3}$,
  and therefore there exist constants $C_\eps$ such that
  \begin{equation*}
    \b|\GMat^\eps_{ij,k}(x)\b|\leq C_\eps\min\b\{ 1, |x|^{-2}\b\}\quad\text{and}\quad\b|\GMat^\eps_{ij,kl}(x)\b|\leq C_\eps
    \min\b\{ 1, |x|^{-3}\b\},
  \end{equation*}
  and it follows that $\GMat^\eps_{ij,k}$ and $\GMat^\eps_{ij,kl}$ are in $L^2(\R^3)$, and hence the
  integrals in \eqref{eq:KerDefs} converge for any $s\in\R^3$.\medskip

  \noindent\emph{Fourier characterisation of kernels.}
  To prove that the kernels $\Jker$ and $\Kker$ satisfy the stated properties, we use a
  characterisation via the Fourier transform.
  Applying the Fourier transform to the definition of $\GMat^\eps$ given in
  \S\ref{sec:notation-conventions}, we obtain
\begin{equation*}
  -\Elas_{abcd}\widehat{\GMat^\eps_{ec,db}}(k) = \Elas_{abcd}k_bk_d\widehat{\GMat^\eps_{ec}}(k) = \Id_{ae}\widehat{\varphi^\eps}(k).
\end{equation*}
Define the $2$--tensor $\Dsf(k)_{ac}: = \Elas_{abcd}k_bk_d$, and its algebraic inverse
$\Dsf(k)^{-1}$, satisfying the relation $\Dsf(k)^{-1}_{ab}\Dsf(k)_{bc}=\Id_{ac}$. $\Dsf(k)^{-1}$ is
well--defined for $k\neq0$, since the Legendre--Hadamard condition on $\Elas$ entails that
$\Dsf(k)$ is strictly positive definite in this case, and it follows that
\begin{equation*}
  \widehat{\GMat^\eps_{ec}}(k) = \Dsf(k)^{-1}_{ec}\widehat{\varphi^\eps}(k)\qquad
  \text{and}\qquad\widehat{\GMat^\eps_{ab,c}}(k)= -ik_c\Dsf(k)^{-1}_{ab}\widehat{\varphi^\eps}(k).
\end{equation*}
Since $\varphi^\eps$ was assumed to be smooth and rapidly--decreasing, the same holds for
$\widehat{\varphi^\eps}$. Moreover, $\Dsf(k)^{-1}_{ab}k_c$ is $-1$--homogeneous in $k$, i.e.
\begin{equation}\label{eq:-1Homogeneous}
  \Dsf(\lambda k)^{-1}_{ab}\lambda k_c = \lambda^{-1} \Dsf(k)^{-1}_{ab} k_c,\quad\text{for any }\lambda\neq 0.
\end{equation}
Using this observation and applying Plancherel's theorem to the definitions in \eqref{eq:KerDefs},
\begin{align*}
  \Jker_{kmgr}(s)
  &= \int_{\R^3}\Elas_{abcd}\Alt_{bpl}\Elas_{ijkl}\Alt_{pmn}
    \Alt_{dqh}\Elas_{efgh}\Alt_{qrs}
    \widehat{\GMat^\eps_{ai,jn}}(k)\overline{\widehat{\GMat^\eps_{ce,fs}}(k)}\mathrm{e}^{-ik\cdot s}\dd k\\
  &= \int_{\R^3}\Elas_{abcd}\Alt_{bpl}\Elas_{ijkl}\Alt_{pmn}
    \Alt_{dqh}\Elas_{efgh}\Alt_{qrs}k_jk_nk_fk_s
    \Dsf(k)_{ai}^{-1}\Dsf(k)_{ce}^{-1}\b|\widehat{\varphi^\eps}(k)\b|^2\mathrm{e}^{-ik\cdot s}\dd k,\\
  \Kker_{abcd}(s)
  &= \int_{\R^3}\Elas_{efgh}\Elas_{aijk}\Elas_{clmn}\Alt_{fib}\Alt_{hld}
    \widehat{\GMat^\eps_{ej,k}}(k)\overline{\widehat{\GMat^\eps_{gm,n}}(k)}\mathrm{e}^{-ik\cdot s}\dd k\\
  &= -\int_{\R^3}\Elas_{efgh}\Elas_{aijk}\Elas_{clmn}\Alt_{fib}\Alt_{hld}
    k_kk_n\Dsf(k)^{-1}_{ej}\Dsf(k)^{-1}_{gm}\b|\widehat{\varphi^\eps}(k)\b|^2
    \mathrm{e}^{-ik\cdot s}\dd k.
\end{align*}
As $\varphi^\eps$ is assumed to be radially symmetric, it follows that
$\widehat{\varphi^\eps}$ is also radially--symmetric, and therefore
$k_kk_n\Dsf(k)_{ej}^{-1}\Dsf(k)_{gm}^{-1}|\widehat{\varphi^\eps}(k)|^2$ is even in $k$.
Using the latter observation, and setting $r = |k|$ and decomposing $k = r z$ for some
$z\in\mathbb{S}^2=\{z\in\R^3\sep |z|=1\}$, we transform to polar coordinates, and
use \eqref{eq:-1Homogeneous} and the evenness of $\widehat{\varphi^\eps}$ to obtain
\begin{align*}
  \Kker_{abcd}(s)
  &= -\int_{\R^3}\Elas_{efgh}\Elas_{aijk}\Elas_{clmn}\Alt_{fib}\Alt_{hld}
    z_kz_n\Dsf(z)^{-1}_{ej}\Dsf(z)^{-1}_{gm}r^{-2}\b|\widehat{\varphi^\eps}(rz)\b|^2
    \cos(rz\cdot s)\dd k\\
  &=- \int_{\mathbb{S}^2}\Elas_{efgh}\Elas_{aijk}\Elas_{clmn}\Alt_{fib}\Alt_{hld}
    z_kz_n\Dsf(z)^{-1}_{ej}\Dsf(z)^{-1}_{gm}
    \bg(\frac12\int_{-\infty}^{+\infty}\b|\widehat{\varphi^\eps}
    (rz)\b|^2\mathrm{e}^{ir z\cdot s}\dd r\bg)\,\dd\Haus^2(z).
\end{align*}
Standard properties of the Fourier transform imply that $\widehat{\varphi^\eps}(k)
= \widehat{\varphi^1}(\eps k)$, so applying this relation and changing variable, we find
\begin{equation*}
  \int_{-\infty}^{+\infty}\b|\widehat{\varphi^\eps}(rz)\b|^2\mathrm{e}^{ir z\cdot s}\dd r
  =\frac{1}{\eps}\int_{-\infty}^{+\infty}\b|\widehat{\varphi^1}(rz)\b|^2\mathrm{e}^{ir z\cdot s/\eps}\dd r.
\end{equation*}
Now, for any $\eps>0$, we define $\eta^\eps:\R\to\R$ to be
\begin{equation*}
  \eta^\eps(t) := \frac{1}{\eps}\int_{-\infty}^{+\infty}\b|\widehat{\varphi^1}(r e_1)\b|^2\mathrm{e}^{ir t/\eps}
  \dd r = \eta^1(t/\eps)/\eps.
\end{equation*}
It is straightforward to show that this function is rapidly--decreasing, a property it inherits
from $\varphi^\eps$.
In summary, we have shown that
\begin{equation}\label{eq:KFourierRep}
  \Kker_{abcd}(s) = -\int_{\mathbb{S}^2}\smfrac12\Elas_{efgh}\Elas_{aijk}\Elas_{clmn}\Alt_{fib}\Alt_{hld}
  z_kz_n\Dsf(z)^{-1}_{ej}\Dsf(z)^{-1}_{gm}\eta^\eps(z\cdot s)\,\dd\Haus^2(z).
\end{equation}
Performing a similar computation for $\Jker$, we obtain
\begin{align*}
  \Jker_{kmgr}(s)
  &= \int_{\Sph^2}\smfrac12\Elas_{abcd}\Elas_{ijkl}\Elas_{efgh}\Alt_{bpl}\Alt_{pmn}
    \Alt_{dqh}\Alt_{qrs}z_jz_nz_fz_s
    \Dsf(z)_{ai}^{-1}\Dsf(z)_{ce}^{-1}\\
  &\hspace{5cm}\times\bg(\int_{-\infty}^\infty r^2\b|\widehat{\varphi^\eps}(rz)\b|^2\mathrm{e}^{irz\cdot s}\dd r \bg)\dd \Haus^2(z).
\end{align*}
Considering the inner integral and performing a change of variable,
\begin{equation*}
  \int_{-\infty}^\infty r^2\b|\widehat{\eta^\eps}(rz)\b|^2\mathrm{e}^{ir t}\dd r = \frac{1}{\eps^3}\int_{-\infty}^{+\infty}r^2\b|\widehat{\eta^1}(r e_1)\b|^2\mathrm{e}^{irt/\eps}
  \dd r = -(\eta^\eps)''(t);
\end{equation*}
hence
\begin{equation*}
  \Jker_{kmgr}(s)= \int_{\Sph^2}\smfrac12\Elas_{abcd}\Elas_{ijkl}\Elas_{efgh}\Alt_{bpl}\Alt_{pmn}
    \Alt_{dqh}\Alt_{qrs}z_jz_nz_fz_s\Dsf(z)_{ai}^{-1}\Dsf(z)_{ce}^{-1}(\eta^\eps)''(z\cdot s)\,\dd\Haus^2(z).
\end{equation*}
By applying a series of tensor identities, this representation can be reduced to
\begin{equation}
 \label{eq:JFourierRep}
 \Jker_{kmgr}(s)= \int_{\Sph^2}\smfrac12
 \b[\Elas_{kmgr}-\Elas_{abgr}\Dsf(z)_{ai}^{-1}\Elas_{ijkm}z_bz_j\b]
 (\eta^\eps)''(z\cdot s)\,\dd\Haus^2(z).
\end{equation}\medskip

\noindent\emph{Kernel properties.}
The symmetry and smoothness properties asserted in (1) and (2)
follow directly from the representations \eqref{eq:KFourierRep} and \eqref{eq:JFourierRep},
noting that $\eta^\eps$ is smooth by construction.

Next, we note that as $\eta^1\in C^\infty(\R)$ is a rapidly--decreasing function, there exist constants $C_m>0$ for $m\in\N$, independent of
$\eps$, such that
\begin{equation}\label{eq:PsiEpsBounds}
  \b|(\eta^\eps)^{(m)}(r)\b| \leq \frac{C_m}{\eps^{m+1}}\quad\text{for all }r\in\R.
\end{equation}
Moreover, since $\Elas$ satisfies a Legendre--Hadamard condition and $\Dsf(k)$ is strictly positive definite,
it follows that there exist $M$ and $M'$ such that for all $z\in\Sph^2$
\begin{equation}\label{eq:SphBounds}
  \begin{gathered}
    \left|\Elas_{efgh}\Elas_{aijk}\Elas_{clmn}\Alt_{fib}\Alt_{hld}
      z_kz_n\Dsf(z)^{-1}_{ej}\Dsf(z)^{-1}_{gm}\right|\leq M\quad\text{and}\\
    \left|\Elas_{kmgr}-\Elas_{abgr}\Dsf(z)_{ai}^{-1}\Elas_{ijkm}z_bz_j\right|\leq M'.
  \end{gathered}
\end{equation}
Applying the bounds \eqref{eq:PsiEpsBounds} and \eqref{eq:SphBounds} to the representations \eqref{eq:KFourierRep}
and \eqref{eq:JFourierRep}, we obtain
  \begin{equation}\label{eq:UniformDerivBound}
    \b|D^m\Kker_{abcd}(s)\b|\leq \frac{4\pi C_mM}{\eps^{m+1}}\quad\text{and}\quad
    \b|D^m\Jker_{abcd}(s)\b|\leq \frac{4\pi C_{m+2}M}{\eps^{m+3}}
    \quad\text{for all }s\in\R^3\text{ and }m\in\N.
  \end{equation}
  Further, taking derivatives of \eqref{eq:KFourierRep}, applying the resulting multilinear
  operator to the collection of vectors $v_1,\ldots,v_j,s\ldots,s$, where $|v_i|=1$, and using
  \eqref{eq:SphBounds} once more,
  we find that
  \begin{equation*}
    \B|D^m \Kker(s):\b[v_1,\ldots,v_j,s,\ldots,s\b]\B|
    \leq M \int_{\Sph^2}|z\cdot s|^{m-j} \b|(\eta^\eps)^{(m)}(z\cdot s)\b|\dd z.
  \end{equation*}
  Expressing this upper bound using polar coordinates on $\Sph^2$ with inclination $\theta$
  measured relative to an axis parallel to $s$, and subsequently changing variable to
  $t=\frac{|s|}{\eps}\cos\theta$, we have
  \begin{equation}\label{eq:DecayDerivBound}
    \begin{aligned}
      \B|D^m \Kker(s)\b[v_1,\ldots,v_j,s,\ldots,s\b]\B|
      &\leq 2\pi M \int_0^\pi \bg|(\eta^1)^{(m)}\left(\frac{|s|\cos\theta}{\eps}\right)\bg|
      \frac{|s|^{m-j}|\!\cos\theta|^{m-j}\sin\theta}{\eps^{m+1}}\dd\theta\\
      &= \frac{2\pi M}{\eps^{j}|s|} \int_{-|s|/\eps}^{|s|/\eps}\hspace{-2mm}|t|^{m-j}\b|(\eta^1)^{(m)}(t)\b|\dt\\
      &\leq\frac{2\pi M}{\eps^{j}|s|} \int_{-\infty}^\infty|t|^{m-j}\b|(\eta^1)^{(m)}(t)\b|\dt,
    \end{aligned}
  \end{equation}
  where the integral in this upper bound is finite since $\eta^1$ is a rapidly--decreasing
  function. Dividing by $|s|^{m-j}$, and combining with
  \eqref{eq:UniformDerivBound} completes the proof of assertion (3).
\end{proof}

\noindent
We make the following remarks concerning the Fourier representations of the kernels given
in formulae \eqref{eq:KFourierRep} and \eqref{eq:JFourierRep}:
\begin{itemize}
\item In the case where $\varphi^\eps$ is a Gaussian, as in the example provided in
  \eqref{eq:Gaussian}, we may explicitly compute $\eta^\eps$ as used in the proof above,
  giving
\begin{equation}\label{eq:Explicitpsi}
  \eta^\eps(t)
  = \frac{8\pi^{7/2}}{\eps}\exp\B(-\frac{t^2}{4\eps^2}\B).
\end{equation}
More generally, for the purpose of computation we may choose $\varphi^\eps$ in order to obtain
a convenient expression for $\eta^\eps$.
\item Combining a convenient choice for $\eta^\eps$ with the representations
  \eqref{eq:KFourierRep} and \eqref{eq:JFourierRep} suggests that the kernels
  $\Jker$ and $\Kker$ may be efficiently computed numerically, since these expressions require
  integration of a smooth function over the unit sphere, and this can be accurately approximated
  in practice with relatively few quadrature points. Moreover, these expressions are amenable
  to asymptotic analysis in the case where $|s|\gg\eps$, which should allow for the implementation
  of explicit expressions to speed--up computation.
\end{itemize}

\section{Deforming dislocations and the Peach--Koehler force}
\label{sec:deform-disl-peach}
Theorem~\ref{th:Energy} established a representation of the elastic energy
induced in a material due to the presence of dislocations. In this section, we prove
Theorem~\ref{th:Force}, computing the configurational or Peach--Koehler force induced on a
dislocation configuration, and demonstrating its properties.


\subsection{The Peach--Koehler force}\label{sec:peach-koehler-force}

The first step towards proving Theorem~\ref{th:Force} is to establish the expressions \eqref{eq:PKForce} and \eqref{eq:PKForceSurface}.

\begin{lemma}
  \label{th:PKForce}
  If $S\in\Adm$, the inner variation of $\Phi^\eps$, defined in \eqref{eq:PhiDefinition},
  is given by
  \begin{equation*}
    \begin{gathered}
      \<D\Phi^\eps(S),g\> = -\int_\Gamma \PK(S)\cdot g\,\dd\Haus^1(s),\quad\text{where}
      \quad \PK(s,S):=G(s,S)\wedge\tau(s)\\
      \text{and}\quad G_k(s,S):=\int_\Gamma\Alt_{klm}\Kker_{alcd,m}(s-t)
      b_a(s)b_c(t)\tau_d(t)\dd\Haus^1(t).
    \end{gathered}
  \end{equation*}
  Moreover, if $S = \partial T$, where $T\in\Itwo(\R^3;\Latt)$ is supported on $\Sigma$ with
  slip vector $b$ and normal field
  $\nu$, $G$ can alternatively be written
  \begin{equation*}
    G_k(s,S) = \int_\Sigma\Alt_{def}\Alt_{klm}\Kker_{alcd,mb}(s-t)b_a(s)b_c(t)\nu_f(t)\dd\Haus^2(t).
  \end{equation*}
\end{lemma}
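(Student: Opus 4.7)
The plan is to compute the inner variation directly from the kernel representation \eqref{eq:KRep} of Theorem~\ref{th:Energy}. Setting $F_\delta := \id + \delta g$, the pushforward formula of \S\ref{sec:disl-conf-slip}, reparametrised back onto $\Gamma$, gives
\begin{equation*}
\Phi^\eps(F_{\delta\#}S) = \smfrac12\int_\Gamma\int_\Gamma \Kker_{abcd}\b(F_\delta(s) - F_\delta(t)\b)\, b_a(s)\b[DF_\delta(s)\tau(s)\b]_b\, b_c(t)\b[DF_\delta(t)\tau(t)\b]_d\,\dd(\Haus^1\otimes\Haus^1),
\end{equation*}
since the Jacobian of $F_\delta$ along $\tau$ combines with the tangent direction into $DF_\delta\tau$. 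Differentiating at $\delta = 0$ produces three contributions---one from the argument of $\Kker$ and two from the tangent factors---and the symmetries $\Kker_{abcd}(u) = \Kker_{cdab}(u) = \Kker_{abcd}(-u)$ from Theorem~\ref{th:Energy}(1) allow one to swap $s \leftrightarrow t$ in redundant terms to arrive at $\<D\Phi^\eps(S),g\> = A + B$, where
\begin{align*}
A &:= \int_\Gamma\int_\Gamma \Kker_{abcd,i}(s-t)\,g_i(s)\, b_a(s)\tau_b(s)\, b_c(t)\tau_d(t)\,\dd(\Haus^1\otimes\Haus^1),\\
B &:= \int_\Gamma\int_\Gamma \Kker_{abcd}(s-t)\,g_{b,f}(s)\tau_f(s)\, b_a(s)\, b_c(t)\tau_d(t)\,\dd(\Haus^1\otimes\Haus^1).
\end{align*}

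The key step is then an integration by parts in $B$ which shifts the $s$-gradient off $g$. Setting $\Psi_{ab}(x) := \int_\Gamma\Kker_{abcd}(x-t)b_c(t)\tau_d(t)\,\dd\Haus^1(t)$, which is smooth on $\R^3$ by Theorem~\ref{th:Energy}(2), the crucial observation is that $\partial S = 0$ implies each scalar component $S^{(a)} := b_a\tau\,\Haus^1\Rstr\Gamma$ is a closed 1-current, and hence for any smooth $\phi:\R^3\to\R$,
\begin{equation*}
\int_\Gamma b_a(s)\tau_f(s)\partial_f\phi(s)\,\dd\Haus^1(s) = \<\partial S^{(a)},\phi\> = 0.
\end{equation*}
Applying this with $\phi = g_i\Psi_{ai}$ and expanding by the Leibniz rule converts $B$ into $-\int_\Gamma\int_\Gamma b_a(s)g_i(s)\tau_f(s)\Kker_{aicd,f}(s-t)b_c(t)\tau_d(t)\,\dd(\Haus^1\otimes\Haus^1)$. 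Adding this to $A$ and using $\Alt_{klm}\Alt_{lpq} = \Id_{kq}\Id_{mp} - \Id_{kp}\Id_{mq}$ in reverse regroups the combined integrand as a contraction of $g$ with $G\wedge\tau$, yielding \eqref{eq:PKForce}. The surface version \eqref{eq:PKForceSurface} then follows from the current-theoretic Stokes identity $\<T, d\eta\> = \<\partial T, \eta\> = \<S, \eta\>$ applied to the 1-form $\eta_d(t) := \Alt_{klm}\Kker_{alcd,m}(s-t)b_a(s)b_c$ (with $s$ and outer indices held fixed), using that $b_c$ is locally constant on $\Sigma$ so the exterior derivative in $t$ acts only on $\Kker$, and identifying the resulting 2-form with a vector via the Hodge dual to produce the $\Alt_{def}\nu_f\,\dd\Haus^2$ pattern.

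The principal technical obstacle is the integration-by-parts step: since $g$ is only Lipschitz, the identity $\<\partial S^{(a)}, g_i\Psi_{ai}\> = 0$ must be justified by approximating $g$ by smooth maps and passing to the limit, using the uniform bounds on $\Kker$ and $D\Kker$ from Theorem~\ref{th:Energy}(3) to control the error in the double integral. Once this is handled, the rest of the proof is a matter of tensor algebra and careful sign-tracking through Stokes' theorem and the identification of the wedge product with the cross product.
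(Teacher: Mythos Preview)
Your proposal is correct and follows essentially the same route as the paper: push forward by $\id+\delta g$, differentiate the kernel representation at $\delta=0$, use the symmetries $\Kker_{abcd}(u)=\Kker_{cdab}(u)=\Kker_{abcd}(-u)$ to collapse the three resulting terms to two, integrate by parts along $S$ using $\partial S=0$ to shift the tangential derivative off $g$, and then invoke the $\Alt\Alt$ identity to recognise $G\wedge\tau$; the surface form is obtained by a single application of Stokes' theorem in the $t$-variable. Your remark about approximating Lipschitz $g$ by smooth maps is a reasonable precaution, though the paper sidesteps this by simply taking $g\in C^1(\R^3;\R^3)$ in the proof.
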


\begin{proof}
Given $g\in C^1(\R^3;\R^3)$, we set $h^\del:=\id+\del g$. Pushing forward, we find that
\begin{multline*}
  \Phi^\eps(h^\del_\#S)
  = \int_{\Gamma\times\Gamma}\!\smfrac12\Kker_{abcd}\b(s-t+\del (g(s)- g(t))\b)
  b_a(s)\b(\tau_b(s)+\del \nabla_\tau g_b(s)\b)b_c(t)\\
  \times\b(\tau_d(t)+\del\nabla_\tau g_d(t)\b)\dd(\Haus^1\otimes\Haus^1)(s,t).
\end{multline*}
Applying the definition \eqref{eq:InnerVariation}, we differentiate and set $\delta=0$,
we find
\begin{align*}
  \<D\Phi^\eps(S),g\>
  &= \int_{\Gamma\times\Gamma}\B[\smfrac12\Kker_{abcd,e}(s-t)(g_e(s)-g_e(t))b_a(s)\tau_b(s)b_c(t)\tau_d(t)\\
  &\hspace{2cm}+\smfrac12\Kker_{abcd}(s-t)b_a(s)\nabla_\tau g_b(s)b_c(t)\tau_d(t)\\
  &\hspace{3cm}+\smfrac12\Kker_{abcd}(s-t)b_a(s)\tau_b(s)b_c(t)\nabla_\tau g_d(t)\B]\dd(\Haus^1\otimes\Haus^1)(s,t).
\end{align*}
Applying the symmetries of $\Kker$ asserted in Lemma~\ref{th:Kernels}, this formula reduces to
\begin{align*}
  \<D\Phi^\eps(S),g\>
  &= \int_{\Gamma\times\Gamma}\B[\Kker_{abcd,e}(s-t)g_e(s)b_a(s)\tau_b(s)b_c(t)\tau_d(t)\\
  &\hspace{2cm}+\Kker_{abcd}(s-t)b_a(s)\nabla_\tau g_b(s)b_c(t)\tau_d(t)\B]
    \dd(\Haus^1\otimes\Haus^1)(s,t).
\end{align*}
Since $S\in\Adm$ satisfies $\partial S =0$, i.e. $S$ is formed of closed loops,
we may integrate by parts in the variable $s$, passing a derivative from $g$ onto $\Kker$ in the
second term, which yields
\begin{multline*}
  \<D\Phi^\eps(S),g\>
  = \int_{\Gamma\times\Gamma}\B[\Kker_{abcd,e}(s-t)g_e(s)b_a(s)\tau_b(s)b_c(t)\tau_d(t)\\
  -\Kker_{abcd,e}(s-t)b_a(s)g_b(s)\tau_e(s)b_c(t)\tau_d(t)\B]
  \dd(\Haus^1\otimes\Haus^1)(s,t)
\end{multline*}
Now, applying the tensor identity $\Alt_{ijk}\Alt_{klm}=\Id_{il}\Id_{jm}-\Id_{im}\Id_{jl}$,
and the definition of $G(s,S)$, we obtain the first result. To obtain the latter expression,
we simply apply Stokes' Theorem.
\end{proof}

\noindent
In view of this result, we make two remarks:
\begin{itemize}
\item Without additional regularity assumptions on $S$, we note that $\PK$ is generically
  only in $L^\infty(S)$, since the tangent field $\tau$ on a Lipschitz curve need not be continuous.
\item More generally, the fact that $\PK$ is the product of a smooth kernel with components of
  $b$ (which is locally constant on $S$) and the tangent field $\tau$, entails that the regularity
  of $\PK$ at a point is dictated by the regularity of the tangent field $\tau$ at the same
  point. This point is one we will return to in \S\ref{sec:evol-probl-exist} when formulating a
  dynamical theory.
\item If $g$ and $S$ are assumed to be more regular, it is possible to compute higher--order
  variations of the energy in a similar way. However, it should be noted that some care is
  required if variations are made in different directions, since the order in which variations are
  taken will matter in general.
\end{itemize}

\subsection{Bounds on the Peach--Koehler force}
The second crucial step in proving Theorem~\ref{th:Force} is to establish \eqref{eq:PKLInfty},
which is encoded in the following result.

\begin{lemma}
  If $S\in\Adm$ is an admissible dislocation configuration with Burgers vector $b$, then the
  Peach--Koehler force satisfies the uniform bound
  \begin{equation*}
    \b\|\PK(S)\b\|_{L^\infty}\leq
    \frac C\eps\|b\|_{L^\infty}\MRatio(S)\log\bg|1+\frac{2\,\Mass(S)}{\eps\,\MRatio(S)}\bg|,
  \end{equation*}
  where $C>0$ is a coefficient independent of $S$ and $\eps$, $\|b\|_{L^\infty}$ is the maximum
  Burgers vector, and the mass $\Mass(S)$ and mass ratio $\MRatio(S)$ were respectively defined
  in \eqref{eq:Mass} and \eqref{eq:MassRatio}.
\end{lemma}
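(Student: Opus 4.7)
Since cross product with a unit vector is a contraction, $|\PK(s,S)| \leq |G(s,S)|$, so it suffices to bound $\|G\|_{L^\infty}$. Pulling $|b(s)|$ outside the defining integral for $G$ in \eqref{eq:PKForce} and absorbing all purely combinatorial factors (the elasticity and alternating tensors) into a constant, one reduces to estimating
\begin{equation*}
|G(s,S)| \leq C|b(s)| \int_\Gamma |D\Kker(s-t)|\,|b(t)|\,d\Haus^1(t).
\end{equation*}
I would then invoke property~(3) of Theorem~\ref{th:Energy} in the case $m=1$, $j=1$ (one free direction, no $s/|s|$ direction) to obtain the pointwise bound $|D\Kker(u)| \leq C/\bigl(\eps\sqrt{\eps^2+|u|^2}\bigr)$ uniformly in $u\in\R^3$. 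At large scales this decays like $(\eps|u|)^{-1}$, which is the rate ultimately responsible for the logarithm.

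\textbf{Reduction to a one-dimensional integral.} To exploit $\MRatio(S)$, I introduce the positive Radon measure $\tilde\mu$ on $[0,\infty)$ defined by $\tilde\mu([0,r]) := \Mass\bigl(S\Rstr\overline{B_r(s)}\bigr) = \int_{\Gamma\cap\overline{B_r(s)}}|b|\,d\Haus^1$. By \eqref{eq:MassvsMeasure} and the definition \eqref{eq:MassRatio} of $\MRatio$, $\tilde\mu$ satisfies the two-sided envelope $\tilde\mu([0,r]) \leq \min\{r\MRatio(S),\,\Mass(S)\}$ for all $r \geq 0$. Pushing arc length forward to $[0,\infty)$ and integrating by parts (both boundary terms vanish since $\tilde\mu(0)=0$ and $\tilde\mu$ is bounded), I rewrite
\begin{equation*}
\int_\Gamma \frac{|b(t)|\,d\Haus^1(t)}{\sqrt{\eps^2+|s-t|^2}}
= \int_0^\infty \frac{d\tilde\mu(r)}{\sqrt{\eps^2+r^2}}
= \int_0^\infty \frac{r\,\tilde\mu([0,r])}{(\eps^2+r^2)^{3/2}}\,dr.
\end{equation*}
Splitting at $R_\star := \Mass(S)/\MRatio(S)$, on $[0,R_\star]$ I apply $\tilde\mu \leq r\MRatio(S)$ together with $r^2/(\eps^2+r^2)^{3/2} \leq (\eps^2+r^2)^{-1/2}$, producing $\MRatio(S)\,\mathrm{arcsinh}(R_\star/\eps) \leq C\MRatio(S)\log\bigl(1+2\Mass(S)/(\eps\MRatio(S))\bigr)$. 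On $[R_\star,\infty)$ I apply $\tilde\mu \leq \Mass(S)$ to obtain $\Mass(S)/\sqrt{\eps^2+R_\star^2} \leq \MRatio(S)$, which is absorbed into the logarithmic term (directly when $\Mass(S)/(\eps\MRatio(S)) \geq 1$, and via the elementary comparison $\log(1+2x) \gtrsim \min\{x,1\}$ otherwise, the latter case corresponding to a short curve which can alternatively be handled by a direct $L^\infty$ estimate). Combining with the prefactor $1/\eps$ and $|b(s)| \leq \|b\|_{L^\infty}$ produces the stated bound.

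\textbf{Main obstacle.} Once the decay bound and envelope are in hand, the calculation is essentially routine; the real subtlety lies in choosing which case of property~(3) to use. The sharper $j=0$ bound $|D\Kker(u)[u/|u|]| \leq C/\sqrt{\eps^4+|u|^4}$ decays like $|u|^{-2}$ and would eliminate the logarithm entirely, but it requires contracting the derivative direction of $\Kker$ with $u/|u|$; in the integrand $\Alt_{klm}\Kker_{alcd,m}(s-t)\tau_d(t)$ this direction is instead summed against the alternating tensor and the tangent $\tau(t)$, which for a generic curve $\Gamma$ carries no structural alignment with $u/|u|$. The $j=1$ bound is therefore the sharpest universally applicable estimate, and combining $(\eps|u|)^{-1}$ decay with the envelope $\tilde\mu([0,r]) \leq \min\{r\MRatio,\Mass\}$ yields precisely the logarithmic dependence on $\Mass(S)/(\eps\MRatio(S))$ asserted in the statement.
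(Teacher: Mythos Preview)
Your proof is correct and follows essentially the same route as the paper: both start from the $j=1$ case of the kernel decay estimate to obtain $|D\Kker(u)|\le C/(\eps\sqrt{\eps^2+|u|^2})$, then push the resulting integral forward to the radial distribution function $\mu(r)=\Mass\bigl(S\Rstr\overline{B_r(s)}\bigr)$ and exploit the envelope $\mu(r)\le r\,\MRatio(S)$.

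The only difference is in the finishing calculation. The paper changes variable via the generalised inverse $\rho(m):=\inf\{r:\mu(r)\ge m\}$, obtaining $\int_0^{\Mass(S)}(\eps^2+\rho(m)^2)^{-1/2}\,dm$, and then uses $\rho(m)\ge m/\MRatio(S)$ to reduce to a single explicit integral that evaluates directly to the stated logarithm with no residual additive term. Your integration-by-parts-and-split approach is equally valid but leaves you with the extra piece $\Mass(S)/\sqrt{\eps^2+R_\star^2}=\MRatio(S)\,x/\sqrt{1+x^2}$ (where $x=\Mass(S)/(\eps\MRatio(S))$) that must be absorbed into $\MRatio(S)\log(1+2x)$; this works uniformly in $x\ge0$ since $x/\sqrt{1+x^2}\le\log(1+2x)$, so your slightly informal treatment of the small-$x$ regime can be tightened without difficulty. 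The paper's change-of-variable trick is marginally cleaner precisely because it sidesteps this absorption step.
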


\begin{proof}
  As a consequence of assertion (3) in Lemma~\ref{th:Kernels}, we have that
  \begin{equation*}
    \b|D\Kker(s)\b|\leq \frac{C}{\eps\sqrt{\eps^2+|s|^2}},
  \end{equation*}
  with $C$ independent of $\eps$, and therefore the expression given for $\PK$ in \eqref{eq:PKForce} directly implies that
  \begin{equation*}
    \b|\PK(s,S)\b| \leq \|b\|_{L^\infty}\int_{\Gamma}\b|D \Kker(s-t)\b||b(t)|\dd\Haus^1(t)
    \leq
    \frac{C}\eps\|b\|_{L^\infty}\int_\Gamma\frac{|b(t)|}{\sqrt{\eps^2+|s-t|^2}}\dd\Haus^1(t),
  \end{equation*}
  where $\Gamma$ is the support of $S$.

  This upper bound may now be recast in the following way: Define
  the function $\mu:\R^+\to\R^+$ to be
  \begin{equation*}
    \mu(r):=\Mass\b(S\Rstr\overline{B_r(s)}\b).
  \end{equation*}
  This function is clearly monotonically increasing in $r$, satisfies $\mu(0)=0$, and since
  $S\in\Adm$ is compactly--supported, there must exist $R\geq 0$ for which
  \begin{equation}\label{eq:MaxMass}
    \mu(r) = \Mass(S)\quad\text{whenever }r\geq R.
  \end{equation}
  As a consequence of these facts, $\mu$ is a function of bounded variation
  (see \S3.2 of \cite{AFP00}), and has a weak derivative, $\mu'$, which may in general be a
  measure. Using the definition of $\mu(r)$, we have
  \begin{equation*}
   \int_\Gamma \frac{|b(t)|}{\sqrt{\eps^2+|s-t|^2}}\dd\Haus^1(t)
    = \int_0^\infty \frac{1}{\sqrt{\eps^2+r^2}}\dd \mu'(r).
  \end{equation*}
  Now, to proceed, we define the inverse of $\mu$,
  \begin{equation*}
    \rho(m):=\inf_{r\geq 0}\{\mu(r)\leq m\}.
  \end{equation*}
  We note that as a consequence of the observation made in \eqref{eq:MaxMass},
  $\rho(m)=+\infty$ for $m> \Mass(S)$.
  Changing variable by setting $\rho(m)=r$, and since by definition, $m=\mu(\rho(m))$,
  so that $1=\mu'(\rho(m))\rho'(m)$, we have
  \begin{equation*}
    \int_0^\infty \frac{1}{\sqrt{\eps^2+r^2}}\dd\mu'(r)
    =\int_0^{\Mass(S)} \frac{1}{\sqrt{\eps^2+\rho(m)^2}}\dd m.
  \end{equation*}
  To estimate this integral, we use the definition of $\MRatio(S)$ given in
  \eqref{eq:MassRatio} to find that
  \begin{equation*}
    \mu(r)\leq \MRatio(S)r\quad\text{for all }r\geq0,\qquad
  \text{and hence}\qquad
    \rho(m) \geq \frac{m}{\MRatio(S)}\quad\text{for all }m\geq 0.
  \end{equation*}
  It follows that the latter integral may be bounded above by
  \begin{multline*}
    \int_0^{\Mass(S)} \frac{1 }{\sqrt{\eps^2+\rho(m)^2}} \dd m
    \leq\int_0^{\Mass(S)} \frac{1 }{\sqrt{\eps^2+m^2/\MRatio(S)^2}} \dd m\\
    =\MRatio(S) \log\Bg|\frac{\Mass(S)}{\eps\,\MRatio(S)}+\sqrt{1 + \frac{\Mass(S)^2}{\eps^2\MRatio(S)^2}}\Bg|
    \leq\MRatio(S) \log\bg|1+\frac{2\,\Mass(S)}{\eps\,\MRatio(S)}\bg|,
  \end{multline*}
  which directly entails the stated result.
\end{proof}

As discussed in \S\ref{th:Force}, this estimate, while better than simply using the fact that
$D\Kker$ is globally bounded,
does not take into account much detail of the geometric structure, nor the fact that the
Peach--Koehler force may be cast as either a line or surface integral (see the result of
Lemma~\ref{th:PKForce}). It may be of interest for future applications to improve this bound
in order to take better account of additional geometric features of a given dislocation
configuration.

As a direct consequence of \eqref{eq:PKLInfty}, we obtain
the following $L^2$ bound directly via H\"older's inequality.

\begin{corollary}
  \label{th:PKL2}
  We have the following bound on the Peach--Koehler force:
  \begin{equation}
    \label{eq:PKL2Bound}
    \b\|\PK\b\|_{L^2}\leq \frac C\eps\|b\|_{L^\infty}\Mass(S)^{1/2}\MRatio(S)
    \log\bg|1+\frac{2\,\Mass(S)}{\eps\,\MRatio(S)}\bg|.
  \end{equation}
\end{corollary}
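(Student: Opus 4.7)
The plan is very short, since the corollary is a direct consequence of the $L^\infty$ bound \eqref{eq:PKLInfty} combined with a trivial H\"older estimate. First, recalling that the $L^2(S;\R^3)$ norm is defined relative to $\Haus^1\Rstr\Gamma$, I would apply H\"older's inequality to obtain
\begin{equation*}
  \|\PK\|_{L^2}^2 = \int_\Gamma |\PK|^2\,\dd\Haus^1 \leq \|\PK\|_{L^\infty}^2\,\Haus^1(\Gamma).
\end{equation*}

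Next, I need to control $\Haus^1(\Gamma)$ by $\Mass(S)$. This follows immediately from the representation \eqref{eq:MassvsMeasure}, which gives $\Mass(S) = \int_\Gamma |b|\,\dd\Haus^1$, together with the normalisation assumption on the lattice $\Latt$ that $\min\{|b|:b\in\Latt\setminus\{0\}\}=1$. Since $b(s)\in\Latt\setminus\{0\}$ on $\Gamma$, we have $|b(s)|\geq 1$ $\Haus^1$--almost everywhere on $\Gamma$, whence $\Haus^1(\Gamma) \leq \Mass(S)$. Taking square roots yields $\|\PK\|_{L^2}\leq \|\PK\|_{L^\infty}\,\Mass(S)^{1/2}$.

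Finally, substituting the estimate \eqref{eq:PKLInfty} from the preceding lemma produces exactly the bound claimed in \eqref{eq:PKL2Bound}. There is no genuine obstacle here; the only subtlety worth flagging is the lower bound on $|b|$ that converts the mass into an upper bound on the one--dimensional Hausdorff measure of the support, and this is a direct consequence of the standing normalisation of $\Latt$.
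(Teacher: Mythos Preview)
Your proposal is correct and matches the paper's approach: the corollary is stated as a direct consequence of \eqref{eq:PKLInfty} via H\"older's inequality, and your write-up spells out exactly this, including the minor step $\Haus^1(\Gamma)\leq\Mass(S)$ coming from the lattice normalisation $|b|\geq1$.
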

\medskip

\noindent
Both $L^\infty$ and $L^2$ bounds, \eqref{eq:PKLInfty} and \eqref{eq:PKL2Bound}, will be important
for the proof of Theorem~\ref{th:WellPosedness}.

\subsection{Continuity of the Peach--Koehler force}
To complete the proof of Theorem~\ref{th:Force}, we establish \eqref{eq:PKContinuity}.

\begin{lemma}
  \label{th:PKContinuity}
  If $g:S\to\R^3$ is a Lipschitz map and $F:=\id+g$, then
  \begin{equation*}
    \b\|F^\#\PK(F_\#S)-\PK(S)\b\|_{L^\infty}\leq \b(1+C\Mass(S)\b)\|\nabla_\tau g\|_{L^\infty}+C\Mass(S)\|g\|_{L^\infty},
  \end{equation*}
  where $C$ is a constant independent of $g$ and $S$.
\end{lemma}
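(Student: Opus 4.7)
The plan is to expand the difference $F^\#\PK(F_\#S)(s)-\PK(s,S)$ using the explicit line-integral representation \eqref{eq:PKForce} of $\PK$, and then bound each resulting term pointwise via the kernel estimates of Lemma~\ref{th:Kernels} together with the elementary Lipschitz controls on $g$. Using the standard formula for the pushforward of a $1$--current, the integral defining $G(F(s),F_\#S)$ reparametrises as an integral over $\Gamma$ in which the unit-normalisation of $\tau^F$ and the Jacobian $|\tau(t)+\nabla_\tau g(t)|$ cancel, leaving the unnormalised factor $\tau(t)+\nabla_\tau g(t)$ under the integral.

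I would then write
\begin{align*}
 F^\#\PK(F_\#S)(s)-\PK(s,S)
 &=\bigl[G(F(s),F_\#S)-G(s,S)\bigr]\wedge\tau^F(F(s))\\
 &\quad +G(s,S)\wedge\bigl[\tau^F(F(s))-\tau(s)\bigr],
\end{align*}
and split the first bracket further into (i) a contribution from the kernel-argument shift $\Kker_{alcd,m}(F(s)-F(t))-\Kker_{alcd,m}(s-t)$ and (ii) a contribution from the inner tangent perturbation $\nabla_\tau g(t)$. For (i), the mean-value theorem together with the uniform bound $|D^2\Kker|\leq C/\eps^3$ obtained from Lemma~\ref{th:Kernels}(3) (taking $m=2,j=2$) and the crude estimate $|g(s)-g(t)|\leq 2\|g\|_{L^\infty}$ controls the integrand pointwise by a constant times $|b(t)|\,\|g\|_{L^\infty}$; for (ii), the bound $|D\Kker|\leq C/\eps^2$ controls the integrand by a constant times $|b(t)|\,\|\nabla_\tau g\|_{L^\infty}$. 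Integration over $\Gamma$ then converts $\int|b|\,\dd\Haus^1$ into $\Mass(S)$ via \eqref{eq:MassvsMeasure}, yielding the combined bound $C\Mass(S)\bigl(\|g\|_{L^\infty}+\|\nabla_\tau g\|_{L^\infty}\bigr)$ for the first bracket.

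For the outer-tangent remainder $G(s,S)\wedge[\tau^F(F(s))-\tau(s)]$, a short direct computation using $|\tau|=1$ and the identity $\tau^F=(\tau+\nabla_\tau g)/|\tau+\nabla_\tau g|$ gives $|\tau^F(F(s))-\tau(s)|\leq C|\nabla_\tau g(s)|$ pointwise (universally, by treating the small- and large-$|\nabla_\tau g|$ regimes separately), while $|G(s,S)|\leq C\Mass(S)$ follows again from $|D\Kker|\leq C/\eps^2$ applied to its defining integral. Adding this to the previous contribution produces a bound of the form $C\Mass(S)\bigl(\|g\|_{L^\infty}+\|\nabla_\tau g\|_{L^\infty}\bigr)$, which is in particular stronger than the advertised $(1+C\Mass(S))\|\nabla_\tau g\|_{L^\infty}+C\Mass(S)\|g\|_{L^\infty}$; the isolated "$1$" coefficient is harmlessly absorbed into the universal constant coming from the pointwise tangent estimate.

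The main obstacle is the first of these steps: carrying out the pushforward bookkeeping so that the unit-normalisation of $\tau^F$ and the Jacobian $|\tau+\nabla_\tau g|$ cancel cleanly, leaving a decomposition in which each error term depends linearly on either $g$ or $\nabla_\tau g$ via the kernel or its derivatives. Once this algebraic reduction is in place, every remaining estimate is a routine application of the mean-value theorem together with the $L^\infty$ bounds on $\Kker$ and its derivatives established in Lemma~\ref{th:Kernels}, with the $\eps$-dependent prefactors absorbed into the constant $C$.
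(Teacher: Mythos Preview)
Your proposal is correct and follows essentially the same approach as the paper: both arguments write out the pushforward explicitly (so that the inner Jacobian and tangent-normalisation cancel to leave $\tau+\nabla_\tau g$ under the integral), decompose the difference into a kernel-shift term and tangent-perturbation terms, and then estimate each piece using the uniform bounds on $D\Kker$ and $D^2\Kker$ from Lemma~\ref{th:Kernels} together with the mean-value theorem. The only organisational difference is that the paper keeps the \emph{outer} wedge factor as the unnormalised $DF(s)[\tau(s)]=\tau(s)+\nabla_\tau g(s)$ rather than the unit vector $\tau^F(F(s))$, which is why the isolated ``$1$'' appears in their coefficient; your normalised version gives the slightly cleaner $C\Mass(S)$ bound you describe, and as you note this implies the stated estimate.
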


\begin{proof}
  We recall that
  \begin{equation}\label{eq:PKCont1}
    \PK(s,S)=\bg(\int_\Gamma\Alt_{klm}\Kker_{alcd,m}(s-t)
    b_a(s)b_c(t)\tau_d(t)\dd\Haus^1(t)\bg)\wedge\tau(s),
  \end{equation}
  and so
  \begin{equation}\label{eq:PKCont2}
    \PK\b(F(s),F_\#S\b) = \bg(\int_{\Gamma}\Alt_{klm}\Kker_{alcd,m}\b(F(s)-F(t)\b)
    b_a(s)b_c(t)DF(t)[\tau(t)]_d\dd\Haus^1(t)\bg)\wedge DF(s)[\tau(s)].
  \end{equation}
  Taking the difference between these formulae, applying the triangle inequality
  and using the fact that
  \begin{equation*}
    \b|DF(s)[\tau(s)]-\tau(s)\b|\leq \|\nabla_\tau g\|_{L^\infty}
  \end{equation*}
  and since by assertion (3) of Lemma~\ref{th:Kernels}, $D^2\Kker$ is uniformly bounded, we may
  Taylor expand to obtain
  \begin{align*}
    \b|D\Kker\b(s-t+g(s)-g(t)\b)-D\Kker(s-t)\b|
    &=\B|D^2\Kker\B(s-t+\theta\b(g(s)-g(t)\b)\B)\b[g(s)-g(t)\b]\B|\\
    &\leq C\|g\|_{L^\infty}.
  \end{align*}
  Taking the difference between \eqref{eq:PKCont1} and \eqref{eq:PKCont2}, and applying the
  triangle inequality along with the latter estimates, we directly deduce the result.
\end{proof}

\section{Evolution problem and existence results}
\label{sec:evol-probl-exist}
We now prove Theorem~\ref{th:WellPosedness}. Our basic strategy for doing so
follows a fairly standard scheme. We carry out the following steps:
\begin{enumerate}
\item Construct a family of approximate solutions.
\item Derive bounds on the approximate solutions which are independent of the approximation.
\item Use these bounds and a compactness result to extract a convergent approximating sequence.
\item Prove that the approximating sequence satisfies \eqref{eq:DDD} in the limit, and verify
  the solution is unique.
\end{enumerate}
Each of these steps is carried out in turn over the course of the following sections.

\subsection{Approximation scheme}
In order to prove existence of a dynamical evolution, we set up an approximation scheme, which
may be viewed as an \emph{explicit Euler scheme} for the gradient flow dynamics.
Fixing $T>0$ and a sequence of times
\begin{equation*}
  0=t^0<t^1<\ldots<t^K=T,\quad\text{and set}\quad \delta t^i:= t^{i+1}-t^i,
\end{equation*}
for each $i=0,\ldots,K-1$, we will say that $(U,\{v^i\}_{i=0}^{K-1})$ form an \emph{approximate solution}
to DDD if $U(t) = U(t^i)^\#(\id+(t-t^i)v^i)$ for all $t\in(t^i,t^{i+1}]$ and $i=0,\ldots,K-1$, and
$v^i$ satisfies 
\begin{equation}\label{eq:Timestep}
    v^i\in\argmin_{v\in H^1(S^{i};\R^3)} \Psi(S^i,v)-\b(\PK(S^i),v\b)_{L^2(S^i)},
    \quad\text{where}\quad
  S^{i+1}:= (\id + \delta t^i\,v^i)_\#S^i
\end{equation}
for each $i=0,\ldots,K-1$.

We will prove that each of the minimisation problems \eqref{eq:Timestep} is
well--posed, and given sufficient regularity of $S^i$, $v^i$ is regular.
The following lemma encodes the first of these results.

\begin{lemma}
  \label{th:vExistence}
  If $S^i\in\Adm$, then then the minimisation problem in
  \eqref{eq:Timestep} has a unique solution $v^i\in H^1(S^i;\R^3)$, which satisfies
  \begin{equation}\label{eq:EL}
    \partial_v\Psi(S^i,v^i) \ni\PK(S^i)
    \quad\text{in }H^1(S^i,\R^3)^*
  \end{equation}
  and the bound
  \begin{equation}\label{eq:APVelBound}
        \|v^i\|_{H^1}
        \leq \frac {C\,\Mass(S^i)^{1/2}\MRatio(S^i)\|b\|_{L^\infty}}{\eps\min(\alpha,\beta)}
    \log\bg|1+\frac{2\,\Mass(S^i)}{\eps\,\MRatio(S^i)}\bg|.
  \end{equation}
\end{lemma}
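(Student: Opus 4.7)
The plan is to apply the direct method in the calculus of variations to the functional
$F(v) := \Psi(S^i, v) - (\PK(S^i), v)_{L^2(S^i)}$ on $H^1(S^i;\R^3)$, and then extract the asserted energy bound by testing against $v = v^i$.

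First I would verify that $F$ is well-posed on $H^1(S^i;\R^3)$: assumptions \textbf{(C$_1$)} and \textbf{(G)} give the coercivity estimate
\begin{equation*}
\Psi(S^i, v) \geq \tfrac{\alpha}{2}\|\nabla_\tau v\|_{L^2}^2 + \tfrac{\beta}{2}\|v\|_{L^2}^2 \geq \tfrac{1}{2}\min(\alpha,\beta)\|v\|_{H^1(S^i)}^2,
\end{equation*}
while Corollary~\ref{th:PKL2} ensures $\PK(S^i) \in L^2(S^i;\R^3)$, so the linear term $(\PK(S^i),v)_{L^2}$ is continuous on $H^1(S^i;\R^3)$. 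Combining these via Young's inequality yields coercivity of $F$ on $H^1(S^i;\R^3)$. The strict convexity of $\Psi$ in $v$ (which follows from symmetry and positive definiteness of $A$ in \textbf{(C$_1$)} together with strict convexity of $\psi$ in its third argument in \textbf{(C$_2$)}) transfers to $F$, and convexity plus the Fatou-type argument applied to the integrand yields weak lower semicontinuity of $F$ on $H^1(S^i;\R^3)$. The direct method then delivers existence of a unique minimiser $v^i$.

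The Euler--Lagrange condition \eqref{eq:EL} is obtained from the subdifferential calculus for convex integral functionals: since $\PK(S^i)$ is the Fr\'echet derivative of the linear term, $v^i$ minimises $F$ if and only if $\PK(S^i) \in \partial_v\Psi(S^i, v^i)$ in $H^1(S^i;\R^3)^*$; this also automatically enforces $v^i \cdot \tau = 0$ $\Haus^1$-a.e.\ on $S^i$, since otherwise $\Psi(S^i, v^i) = +\infty$ by \textbf{(C$_2$)}, contradicting minimality (as $F(0) = 0$).

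For the bound \eqref{eq:APVelBound}, I would use the minimiser comparison $F(v^i) \leq F(0) = 0$, which gives
\begin{equation*}
\tfrac{1}{2}\min(\alpha,\beta)\|v^i\|_{H^1}^2 \leq \Psi(S^i, v^i) \leq \b(\PK(S^i), v^i\b)_{L^2} \leq \|\PK(S^i)\|_{L^2}\|v^i\|_{H^1},
\end{equation*}
using Cauchy--Schwarz and $\|v^i\|_{L^2} \leq \|v^i\|_{H^1}$. Dividing through by $\|v^i\|_{H^1}$ and inserting the estimate \eqref{eq:PKL2Bound} from Corollary~\ref{th:PKL2} yields the claimed bound. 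I do not anticipate a serious obstacle: the only subtlety is checking weak lower semicontinuity in the presence of the indicator-type constraint $v\cdot\tau = 0$ encoded in $\psi$, but this follows from the convexity and positivity of the integrand together with a standard Ioffe-type argument applied along the one-dimensional arcs comprising $\Gamma$.
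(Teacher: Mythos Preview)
Your proposal is correct and follows essentially the same route as the paper: direct method via the coercivity estimate from \textbf{(C$_1$)} and \textbf{(G)}, the $L^2$ bound on $\PK$ from Corollary~\ref{th:PKL2}, strict convexity for uniqueness, and the subdifferential characterisation for \eqref{eq:EL}. The only cosmetic difference is in deriving \eqref{eq:APVelBound}: the paper tests the Euler--Lagrange identity \eqref{eq:EL} with $w=v^i$ (using the convexity inequality $D_\tau^\perp\psi(b,\tau,v)\cdot v\geq\psi(b,\tau,v)\geq\tfrac12\beta|v|^2$), whereas you use the minimality comparison $F(v^i)\leq F(0)=0$ directly --- both yield the same bound.
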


\begin{proof}
  First, setting $v=0$ demonstrates that the functional which we seek to
  minimise is finite for some $v\in\HH^1(S^i;\R^3)$.
  Assumptions {\bf (C$_1$)} and {\bf(G)} entail that
  \begin{equation}\label{eq:Psi*LowerBound}
    \Psi(S^i,v) \geq \int_S \tfrac12\alpha
    |\nabla_\tau v|^2+\tfrac12\beta|v|^2\dd\Haus^1\geq\tfrac12\gamma\|v\|_{H^1}^2
    \qquad\text{for any }v\in H^1(S^i,\R^3),
  \end{equation}
  where $\gamma = \min(\alpha,\beta)$.
    Using the bound for $\PK$ derived in Corollary~\ref{th:PKL2}, we also have
  \begin{equation}\label{eq:BasicPKboundL2}
    \bg|\int_{S^i}\PK\cdot v\,\dd\Haus^1\bg|
    \leq \|v\|_{L^2}\frac C\eps\|b\|_{L^\infty}\Mass(S)^{1/2}\MRatio(S)
    \log\bg|1+\frac{2\,\Mass(S)}{\eps\,\MRatio(S)}\bg|.
  \end{equation}
  Combining \eqref{eq:Psi*LowerBound} and \eqref{eq:BasicPKboundL2} and using Young's
  inequality in the usual way, we find that
  \begin{equation*}
    \Psi(S^i,v)-(\PK(S^i),v)_{L^2}
    \geq \tfrac14\gamma\|v\|_{H^1}^2-
    \frac{C^2\|b\|_{L^\infty}^2\Mass(S)\MRatio(S)^2}{2\gamma\eps^2}
    \log\bg|1+\frac{2\,\Mass(S)}{\eps\,\MRatio(S)}\bg|^2
  \end{equation*}
  which implies that the functional which we seek to minimise is coercive.
  As $\Psi$ is strictly convex, and as $D\Phi^\eps(S^i)$ is a bounded linear functional and is
  therefore also convex,
  it follows that the map $v\mapsto\Psi(S^i,v)-(\PK(S^i),v)_{L^2}$ is weakly lower semicontinuous.
  A standard application of the Direct Method of the Calculus of Variations therefore implies
  existence of $v^i$, and strict convexity entails that $v^i$ is unique.

  To establish \eqref{eq:EL}, we note that convexity of
  $v\mapsto \Psi(S^i,v)+\<D\Phi^\eps(S^i),v\>$ implies that the subdifferential at the
  minimum must contain $0$, and therefore, by the characterisation of $\partial_v\psi$ given in
  \eqref{eq:Subgradient_psi^*}, it follows that
\begin{equation}\label{eq:IEL}
  \int_{S^i} \nabla_\tau v^i\cdot A(b,\tau)\nabla_\tau w + \b(D_\tau^\perp\psi(b,\tau,v^i)-\PK\b)\cdot w\,\dd\Haus^1=0\qquad\text{for any }w\in H^1(S^i;\R^3).
\end{equation}
Standard properties of convex functions entail that
\begin{equation*}
  \psi(b,\tau,0)\geq \psi(b,\tau,v)-\xi\cdot v \qquad\text{for any }\xi\in\partial\psi(b,\tau,v),
\end{equation*}
so since $\psi(b,\tau,0)=0$ by {\bf (C$_2$)}, it follows that
\begin{equation*}
  D_\tau^\perp\psi(b,\tau,v)\cdot v\geq \psi(b,\tau,v)\geq \tfrac12\beta|v|^2 \qquad\text{for any }v\text{ such that }v\cdot\tau=0,
\end{equation*}
where we have applied {\bf(G)}. Setting $w=v^i$ in \eqref{eq:IEL} and bounding the left--hand side below as in \eqref{eq:Psi*LowerBound}, we thereby obtain
\begin{equation*}
  \tfrac12\gamma\|v^i\|_{H^1}^2 \leq \b\|\PK(S^i)\b\|_{L^2}\|v^i\|_{L^2}\leq \b\|\PK(S^i)\b\|_{L^2}\|v^i\|_{H^1}.
\end{equation*}
The bound \eqref{eq:APVelBound} then follows directly from the estimate established in Corollary~\ref{th:PKL2}.
\end{proof}

\noindent
Considering the details of the proof above, we make two remarks:
\begin{itemize}
\item Estimate \eqref{eq:APVelBound} hinges upon the $L^2$ bound on $\PK$ made in
  Corollary~\ref{th:PKL2}, which in turn relies upon the $L^\infty$ bound \eqref{eq:PKLInfty}
  proved in Lemma~\ref{th:Kernels}. Any improvement of \eqref{eq:PKLInfty} would therefore entail
  an improved bound on $v^i$.
\item The choice to assume quadratic growth of $\psi$ in {\bf(G)} allows us to directly obtain
  an $H^1$ estimate on $v^i$; if a weaker growth condition was assumed, we would need to apply a
  Poincar\'e--type inequality to obtain a similar estimate. Since such an inequality would
  inevitably depend upon $\Mass(S)$, this would render some aspects of the arguments which follow
  more technical.
\end{itemize}

As a consequence of \eqref{eq:APVelBound}, we may apply the Cauchy--Schwarz inequality to show the
following corollary.

\begin{corollary}
  The solution to the minimisation problem \eqref{eq:Timestep} satisfies
\begin{equation}\label{eq:LengthRateBound}
  \|\nabla_\tau v^i\|_1
  \leq \Mass(S^i)^{1/2}\|\nabla_\tau v^i\|_2
  \leq \frac{C\,\Mass(S^i)\MRatio(S^i)\|b\|_{L^\infty}}{ \eps\min(\alpha,\beta)}\log\bg|1+\frac{2\,\Mass(S^i)}{\eps\,\MRatio(S^i)}\bg|.
\end{equation}
\end{corollary}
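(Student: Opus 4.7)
The plan is to chain two elementary estimates: a Cauchy--Schwarz bound that converts the $L^1$ norm into an $L^2$ norm times the square root of the measure of the domain, followed by the $H^1$ estimate on $v^i$ already proved in Lemma~\ref{th:vExistence}.

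For the first inequality, I would apply Cauchy--Schwarz on the measure space $(\Gamma^i, \Haus^1\Rstr\Gamma^i)$, where $\Gamma^i$ is the support of $S^i$:
\begin{equation*}
  \|\nabla_\tau v^i\|_1 = \int_{\Gamma^i}|\nabla_\tau v^i|\,\dd\Haus^1
  \leq \bg(\int_{\Gamma^i} 1 \,\dd\Haus^1\bg)^{1/2}\bg(\int_{\Gamma^i}|\nabla_\tau v^i|^2\,\dd\Haus^1\bg)^{1/2}.
\end{equation*}
Using the characterisation \eqref{eq:MassvsMeasure}, which identifies the total $\Haus^1$--measure of $\Gamma^i$ (weighted by $|b|$) with $\Mass(S^i)$, the first factor is exactly $\Mass(S^i)^{1/2}$. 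This yields the first inequality.

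For the second inequality, I would simply observe that by definition of the $H^1(S^i;\R^3)$ norm, $\|\nabla_\tau v^i\|_2 \leq \|v^i\|_{H^1}$, and then substitute the estimate \eqref{eq:APVelBound} from Lemma~\ref{th:vExistence}, which provides
\begin{equation*}
  \|v^i\|_{H^1}\leq \frac{C\,\Mass(S^i)^{1/2}\MRatio(S^i)\|b\|_{L^\infty}}{\eps\min(\alpha,\beta)}\log\bg|1+\frac{2\,\Mass(S^i)}{\eps\,\MRatio(S^i)}\bg|.
\end{equation*}
Multiplying by $\Mass(S^i)^{1/2}$ from the Cauchy--Schwarz step produces exactly the right--hand side of the stated corollary, with the factor $\Mass(S^i)$ replacing $\Mass(S^i)^{1/2}$.

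There is no real obstacle here: the result is a direct corollary of Lemma~\ref{th:vExistence} combined with a single application of Cauchy--Schwarz. The only small point to verify carefully is the identification of the underlying measure in the Cauchy--Schwarz inequality, so that $\Mass(S^i)$ (as opposed to $\Haus^1(\Gamma^i)$) is the correct constant appearing; this follows from \eqref{eq:MassvsMeasure} together with the convention that $\|\cdot\|_1$ and $\|\cdot\|_2$ on $S^i$ are the norms defined in \S\ref{sec:spaces-on-1-currents}.
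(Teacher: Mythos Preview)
Your proposal is correct and matches the paper's own argument, which simply states that the corollary follows from \eqref{eq:APVelBound} via the Cauchy--Schwarz inequality. One small clarification on the point you flag at the end: the $L^p$ norms in \S\ref{sec:spaces-on-1-currents} are taken with respect to $\Haus^1\Rstr\Gamma^i$, so Cauchy--Schwarz literally yields $\Haus^1(\Gamma^i)^{1/2}$ rather than $\Mass(S^i)^{1/2}$; the desired inequality then follows because $|b|\geq 1$ on $\Latt\setminus\{0\}$, giving $\Haus^1(\Gamma^i)\leq\Mass(S^i)$ from \eqref{eq:MassvsMeasure}.
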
\medskip

\noindent
Estimate \eqref{eq:LengthRateBound} will be important later, as it provides a control on the
maximal growth rate of $\Mass(S^i)$.

\subsection{Properties of approximate solutions}
Now that the existence of $v^i:S^i\to\R^3$ has been established, we wish to define $S^{i+1}$ as the
pushforward of $S^i$ under the mapping $\delta U^i(s):=\id(s)+\delta t^i\,v^i(s)$.
At present, we have only established that $v^i$ is in $H^1(S^i;\R^3)$; this entails that
$\delta U^i$ is continuous as a mapping from $S^i$ to $\R^3$, but in order to be sure that $S^{i+1}\in\Adm$, we must show that $\delta U^i$ is
Lipschitz, and hence we must develop a regularity theory for $v^i$. The following result
establishes several crucial properties of $v^i$.

\begin{lemma}
  \label{th:Regularity}
  If $S^i$ is a finite union of $C^{k,\gamma}$ curves with $k\geq1$ and $0<\gamma\leq 1$, then the solution $v^i$ to the minimisation
  problem \eqref{eq:Timestep} is
  $C^{k,\gamma}$, and moreover we have the bounds
\begin{gather}
  \|v^i\|_{L^\infty} \leq \frac{C\sqrt{1+2\,\Mass(S^i)}\MRatio(S^i)\|b\|_{L^\infty}}
      {\eps\,\min(\alpha,\beta)}\log\bg|1+\frac{2\,\Mass(S^i)}{\eps\,\MRatio(S^i)}\bg|
  \label{eq:vUniformBound}
  \\
  \|\nabla_\tau v^i\|_{L^\infty} \leq
  \frac{C''}{\eps\,\alpha}\|b\|_{L^\infty}
  \bg(1+\frac{\sqrt{1+2\,\Mass(S^i)}}{\min(\alpha,\beta)}\bg)\Mass(S^i)\MRatio(S^i)
  \log\bg|1+\frac{2\,\Mass(S^i)}{\eps\,\MRatio(S^i)}\bg|\label{eq:DvUniformBound} \\
    \begin{aligned}{}
      [\nabla_\tau v]_\gamma &\leq
      \b(\Mass(S)^{1-\gamma}+[\tau]_\gamma\Mass(S)\b)\frac{C'''}{\eps\,\alpha}\|b\|_{L^\infty}\bg(1+\frac{\sqrt{1+2\,\Mass(S)}}
    {\min(\alpha,\beta)}\bg)
    \MRatio(S)\log\bg|1+\frac{2\,\Mass(S)}{\eps\,\MRatio(S)}\bg|.
  \end{aligned}\label{eq:DvCalpha}
    \end{gather}
\end{lemma}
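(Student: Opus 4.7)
The plan is to exploit the fact that the Euler--Lagrange equation \eqref{eq:EL} reduces, along each connected component of $S^i$ parametrised by arc length $s$, to a second--order ODE
\begin{equation*}
-\frac{d}{ds}\B[A\b(b,\tau(s)\b)\frac{dv^i}{ds}\B] + D_\tau^\perp\psi\b(b,\tau(s),v^i(s)\b) = \PK(s,S^i),
\end{equation*}
and to apply classical one--dimensional elliptic theory together with the $H^1$--bound \eqref{eq:APVelBound} and the Peach--Koehler estimates of \S\ref{sec:deform-disl-peach} to successively extract $L^\infty$, Lipschitz and Hölder control of $v^i$ and $\nabla_\tau v^i$. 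The main task is to track the dependencies of the constants carefully so as to expose the prefactors in $\Mass(S^i)$, $\MRatio(S^i)$ and $[\tau]_\gamma$ which appear in \eqref{eq:vUniformBound}--\eqref{eq:DvCalpha}.

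For \eqref{eq:vUniformBound}, I would use a one--dimensional Sobolev--type embedding on each closed loop of $S^i$ of length $L\le\Mass(S^i)$: from the identity $v^i(s)^2 = v^i(s_0)^2 + 2\int_{s_0}^s v^i\cdot\nabla_\tau v^i$ and Cauchy--Schwarz, averaging $s_0$ over the loop produces $\|v^i\|_{L^\infty}^2 \le L^{-1}\|v^i\|_{L^2}^2 + 2\|v^i\|_{L^2}\|\nabla_\tau v^i\|_{L^2}$. Combined with \eqref{eq:APVelBound}, the factor $\sqrt{L}$ in the $H^1$ bound combines with the $L^{-1/2}$ from the embedding to give the asserted $\sqrt{1+2\Mass(S^i)}$ prefactor.

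For \eqref{eq:DvUniformBound}, I would set $w := A(b,\tau)\nabla_\tau v^i$, so that the ODE reads $w' = D_\tau^\perp\psi(b,\tau,v^i) - \PK(S^i)$. Since each component of $S^i$ is closed, $\int w'=0$ around the loop, which yields $\|w-\bar w\|_{L^\infty}\le\Mass(S^i)\b(\|D_\tau^\perp\psi\|_{L^\infty}+\|\PK\|_{L^\infty}\b)$, while $|\bar w|\le\|A\|_{L^\infty}\Mass(S^i)^{-1/2}\|\nabla_\tau v^i\|_{L^2}$. The term $\|D_\tau^\perp\psi(b,\tau,v^i)\|_{L^\infty}$ is controlled by $C\|v^i\|_{L^\infty}$ thanks to the smoothness of $\psi$ from \textbf{(R)} combined with $\psi(b,\tau,0)=0$ and convexity from \textbf{(C$_2$)}; substituting \eqref{eq:vUniformBound}, \eqref{eq:PKLInfty} and \eqref{eq:APVelBound} and dividing by the ellipticity constant $\alpha$ delivers \eqref{eq:DvUniformBound}. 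The Hölder seminorm \eqref{eq:DvCalpha} follows from the same representation: $w$ is Lipschitz on each loop as an indefinite integral of an $L^\infty$ function, so interpolation on a set of diameter $\Mass(S^i)$ gives $[w]_\gamma\le\Mass(S^i)^{1-\gamma}\|w'\|_{L^\infty}$, and the composition $\nabla_\tau v^i = A(b,\tau)^{-1}w$ picks up the additional contribution $[\tau]_\gamma\Mass(S^i)\|w\|_{L^\infty}$ since $A^{-1}$ is smooth in $\tau$ by \textbf{(R)}.

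The full $C^{k,\gamma}$ regularity for $k\ge 1$ then follows by a standard elliptic bootstrap applied iteratively: once $v^i\in C^{1,\gamma}$ is known, the right--hand side $D_\tau^\perp\psi(b,\tau,v^i)-\PK(S^i)$ inherits the $C^{k-1,\gamma}$ regularity of $\tau$ (using \textbf{(R)} and the smoothness of $\Kker$ from Lemma~\ref{th:Kernels} to control Hölder seminorms of $\PK$ in terms of $[\tau]_\gamma$), and integrating the ODE twice matches the regularity of $S^i$. The subtle point in the first step is that $\psi$ is smooth in $v$ only away from the constraint $v\cdot\tau=0$; however the minimiser identified in Lemma~\ref{th:vExistence} is automatically constrained to $v^i\cdot\tau=0$ $\Haus^1$--a.e.\ by \textbf{(C$_2$)}, and the tangential gradient $D_\tau^\perp\psi$ used throughout is smooth along this submanifold. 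I expect the principal obstacle to be bookkeeping rather than conceptual: the explicit dependencies on $\Mass(S^i)$, $\MRatio(S^i)$, $[\tau]_\gamma$, $\alpha$ and $\beta$ must be propagated carefully, since these estimates are the input to the growth control of $\Mass(S^t)$ and to the passage to a continuous--time limit carried out later.
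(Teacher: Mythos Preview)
Your proposal is correct and follows essentially the same route as the paper: pull back to an arc-length parametrisation, introduce the auxiliary variable $w=A(b,\tau)\nabla_\tau v^i$ (the paper calls it $\sigma$), recast the Euler--Lagrange equation as a first-order system, use the $V\cdot V'$ identity averaged over the loop for the $L^\infty$ bound on $v^i$, integrate the $\sigma'$-equation to get Lipschitz control of $\sigma$ and thence $\|\nabla_\tau v^i\|_{L^\infty}$, and finally split $A^{-1}\sigma$ to extract the H\"older seminorm before bootstrapping. Your explicit handling of the mean $\bar w$ via the $L^2$ bound and your remark on the smoothness of $D_\tau^\perp\psi$ along the constraint submanifold are slightly more careful than the paper's treatment, but the argument is otherwise identical.
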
\medskip

To prove this result, we pull back to a flat domain, recast the resulting equation
as an ODE system, and then use the properties assumed of $A$ and $\psi$ along with some elementary
integral bounds.

\begin{proof}
  We first prove regularity, then proceed to obtain the stated estimates. Since they are fixed
  throughout this proof, we suppress superscripts, writing $v$ and $S$ in place of $v^i$ and
  $S^i$.\medskip

  \noindent
  \emph{Regularity.}
  Since $S$ is assumed to be a union of $C^{k,\gamma}$ curves, there exists a $C^{k,\gamma}$
  diffeomorphism $g$ which maps
  the interval $(-a,a)$ to a neighbourhood of $s\in S$. Without loss of generality, we may assume
  $g$ is an arc--length parametrisation, so $g' = g^\#\tau$ on $(-a,a)$, and therefore
  $|g'|=1$ since $\tau=1$. Defining $V:(-a,a)\to \R^3$ to be the pullback of $v$ by $g$,
  i.e. $V := g^\# v$, we find it has weak derivative
  \begin{equation*}
    g^\#\nabla_\tau v = V'.
  \end{equation*}
  We recall that the equation satisfied by $v$ on $S$ is
  \begin{equation*}
    -\div_\tau\b[A(b,\tau)\nabla_\tau v\b]
    +D^\perp_\tau\psi(b,\tau,v)=\PK,
  \end{equation*}
  so defining $B = g^\#b$ and $F=g^\#\PK$ and `pulling back' the equation, we find that
  $V:(-a,a)\to\R^3$ must satisfy 
  \begin{equation}\label{eq:PulledBack}
    -\b[A\b(B(r),g'(r)\b)V'(r)\b]'+D_\tau^\perp\psi\b(B(r),g'(r),V(r)\b)
    = F(r)
\end{equation}
almost--everywhere on $(-a,a)$. As remarked at the end of \S\ref{sec:peach-koehler-force}, the
fact that $S$ is assumed to by $C^{k\,gamma}$ implies that $F\in C^{k-1,\gamma}\b((-a,a);\R^3\b)$.

We define the auxiliary function $\sigma:=g^\#\b(A\b(b,\tau\b)\nabla_\tau v\b)$, so that
$V' = A(B,g')^{-1}\sigma$.
Solving \eqref{eq:PulledBack} for $V$ entails that $V$ and $\sigma$ must satisfy the system of
equations
\begin{equation}\label{eq:PulledBackSystem}
  \begin{aligned}
    V'(r) &=A\b(B(r),g'(r)\b)^{-1}\sigma(r)\\
    \sigma'(r)&= D_\xi\psi\b(B(r),g'(r),V(r)\b)-F(r).
  \end{aligned}
\end{equation}

  
  Now, since $v\in H^1(S;\R^3)\subset C^{0,\frac12}(S;\R^3)$, and
  $g\in C^{1,\gamma}\b((-a,a);S\b)\subset C^{0,1}\b((-a,a);S\b)$, it follows that
  $V=v^\#g\in C^{0,\frac12}\b((-a,a);\R^3\b)$. The regularity assumptions on $\psi$ and
  the second equation therefore entail that
  $\sigma\in C^{1,\eta}\b((-a,a);\R^3\b)$, where $\eta=\min\{\tfrac12,\gamma\}$, and the regularity
  assumptions on $A$ applied to the first equation
  entail that $V\in C^{1,\gamma}\b((-a,a);\R^3\b)$. Bootstrapping, we ultimately find that
  $V,\sigma\in C^{k,\gamma}\b((-a,a);\R^3\b)$.

  This local argument entails $v\in C^{k,\gamma}(S;\R^3)$ via a finite covering of $S$, which
  is possible since $S\in\Adm$ is a finite union of $C^{k,\gamma}$ curves.\medskip

  \noindent
  \emph{Uniform bound.}
  Now, taking the inner product between $V$ and $V'$
  and then integrating and applying the Cauchy--Schwarz inequality, we obtain
  \begin{equation*}
    \tfrac12|v(s_1)|^2-\tfrac12|v(s_0)|^2 = \int_{g(s_0)}^{g(s_1)} V(r)\cdot V'(r)\,\dd r
    \leq  \|V\|_{L^2}\|V'\|_{L^2} = \|v\|_{L^2}\b\|\nabla_\tau v\b\|_{L^2}.
  \end{equation*}
  Integrating with respect to $s_0$, dividing by $\Mass(S)$, and using \eqref{eq:APVelBound}, we
  find that
  \begin{equation}\label{eq:vUniformBoundProof}
    \begin{aligned}
      |v(s_1)|^2
      &\leq 2 \|v\|_{L^2}\|\nabla_\tau v\|_{L^2}+ \frac{\|v\|_{L^2}^2}{\Mass(S)},\\
      &\leq \frac{C^2\b(1+2\,\Mass(S)\b)\MRatio(S)^2\|b\|^2_{L^\infty}}
      {\eps^2\min(\alpha,\beta)^2}\log\bg|1+\frac{2\,\Mass(S)}{\eps\,\MRatio(S)}\bg|^2.
  \end{aligned}
\end{equation}
  which leads directly to \eqref{eq:vUniformBound}.
  \medskip

  \noindent
  \emph{Uniform gradient bound.}
  If $V$ and $\sigma$ solve \eqref{eq:PulledBackSystem} on $(-a,a)$, then by integrating the
  second equation, we find that
  \begin{equation}\label{eq:sigHolder}
    |\sigma(r_1)-\sigma(r_0)| \leq C' \b(\|F\|_{L^\infty} + \|V\|_{L^\infty}\b) |r_1-r_0|
  \end{equation}
  so $\sigma\in C^{0,1}\b((-a,a);\R^3)$.
  Moreover, we may use \eqref{eq:vUniformBound} and the definition of $F$ as a pullback of
  $\PK$ to find
  \begin{equation}\label{eq:sigUniformBound}
    \begin{aligned}
      \|\sigma\|_{L^\infty}
      &\leq C'\B(\|F\|_{L^\infty} + \|v\|_{L^\infty}\B)\smfrac12 \Mass(S)\\
      &\leq \frac{C''}{\eps}\|b\|_{L^\infty}\bg(1+\frac{\sqrt{1+2\,\Mass(S)}}
      {\min(\alpha,\beta)}\bg)
      \Mass(S)\MRatio(S)\log\bg|1+\frac{2\,\Mass(S)}{\eps\,\MRatio(S)}\bg|
    \end{aligned}
  \end{equation}
  Using the definition of $\sigma$, and noting that {\bf(C$_1$)} implies that
  $\|A^{-1}\|_{L^\infty}\leq \alpha^{-1}$, we therefore obtain
  \begin{equation*}
    \|\nabla_\tau v\|_{L^\infty}\leq
    \frac{C''}{\eps\,\alpha}\|b\|_{L^\infty}\bg(1+\frac{\sqrt{1+2\,\Mass(S)}}
      {\min(\alpha,\beta)}\bg)
      \Mass(S)\MRatio(S)\log\bg|1+\frac{2\,\Mass(S)}{\eps\,\MRatio(S)}\bg|,
  \end{equation*}
  which is \eqref{eq:DvUniformBound}.
  \medskip
  
  \noindent
  \emph{Uniform bound on H\"older seminorm.}
  Applying the assumption {\bf(R)} to deduce that
  $\b|A^{-1}(b,\tau_1)-A^{-1}(b,\tau_2)\b|\leq L|\tau_1-\tau_2|$ for some $L$, we have
  \begin{align*}
    |V'(r_1)-V'(r_0)|
    &= \b|A\b(B(r_1),g'(r_1)\b)^{-1}\sigma(r_1)-A\b(B(r_0),g'(r_0)\b)^{-1}\sigma(r_0)\b|\\
    &\leq \b|A\b(B(r_1),g'(r_1)\b)^{-1}\sigma(r_1)-A\b(B(r_1),g'(r_1)\b)^{-1}\sigma(r_0)\b|\\
    &\qquad\qquad+\b|A\b(B(r_1),g'(r_1)\b)^{-1}\sigma(r_0)-A\b(B(r_1),g'(r_0)\b)^{-1}\sigma(r_0)\b|\\
    &\leq\alpha^{-1}|\sigma(r_1)-\sigma(r_0)|+L\,\b|g'(r_1)-g'(r_0)\b|\|\sigma\|_{L^\infty}\\
    &\leq \B(\alpha^{-1}[\sigma]_\gamma+L\,[g']_\gamma\|\sigma\|_{L^\infty}\B)|r_1-r_0|^\gamma
  \end{align*}
  Using \eqref{eq:sigHolder}, we find that
  \begin{equation*}
    [\sigma]_\gamma \leq \frac{C''}{\eps}\|b\|_{L^\infty}\bg(1+\frac{\sqrt{1+2\,\Mass(S)}}
    {\min(\alpha,\beta)}\bg)
    \Mass(S)^{1-\gamma}\MRatio(S)\log\bg|1+\frac{2\,\Mass(S)}{\eps\,\MRatio(S)}\bg|,
  \end{equation*}
  and estimating the other term using \eqref{eq:sigUniformBound}, we obtain \eqref{eq:DvCalpha}.
\end{proof}

Lemma~\ref{th:Regularity} guarantees the spatial regularity of any approximate solution $U(t)$
defined via the procedure prescribed in \eqref{eq:Timestep}, and therefore ensures that such
approximate solutions are well--defined. Our next step will be to prove that approximate solutions
converge as $\max_i\{\delta t^i\}\to0$, and that the limit satisfies \eqref{eq:DDD}.

\subsection{Convergence of approximate solutions}
Our approach to proving convergence is via compactness; this requires us to prove appropriate
uniform \emph{a priori} bounds on approximate solutions, which will subsequently allow us to employ
the Arzel\`a--Ascoli theorem. We remark that all bounds on approximate solutions derived thus far
depend upon $\Mass(S)$ and $\MRatio(S)$, and therefore it is these quantities we must bound; the
following lemma therefore establishes a bound on the growth of $\Mass(U(t)_\#S)$.

\begin{lemma}
  \label{th:MassMRatioGrowthBounds}
  If $S^0\in\Adm$ with $\MRatio(S^0)<+\infty$, then for any $\rho>\MRatio(S^0)$ and $M>\Mass(S^0)$,
  and all $\delta>0$ sufficiently small, there exists $T(M,\rho,\delta)>0$ such that any
  approximate solution of DDD
  (in the sense described in \S\ref{th:vExistence}) with $\max_i\{\delta t^i\}\leq \delta$
  satisfies
  \begin{equation*}
    \Mass(U(t)_\#S^0)\leq M\quad\text{and}\quad\MRatio(U(t)_\#S^0)\leq \rho
    \quad\text{for all }t\in[0,T].
  \end{equation*}
\end{lemma}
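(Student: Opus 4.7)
The plan is a bootstrap-in-time argument. Define
\begin{equation*}
T^*(M,\rho,\delta):=\sup\{t\ge 0 : \Mass(U(s)_\#S^0)\le M \text{ and } \MRatio(U(s)_\#S^0)\le \rho \text{ for all } s\le t\};
\end{equation*}
since $\Mass(S^0)<M$ and $\MRatio(S^0)<\rho$ with strict inequality, $T^*>0$ automatically, and it suffices to bound $T^*$ from below by an explicit $T$. On $[0,T^*)$ the hypotheses $\Mass(S^i)\le M$ and $\MRatio(S^i)\le\rho$ hold at every timestep, so Lemma~\ref{th:vExistence}, Lemma~\ref{th:Regularity}, and \eqref{eq:LengthRateBound} supply $i$-independent bounds on $\|v^i\|_{L^\infty}$, $\|\nabla_\tau v^i\|_{L^\infty}$, and $\|\nabla_\tau v^i\|_{L^1}$ in terms of $M$, $\rho$, $\eps$, $\|b\|_{L^\infty}$, $\alpha$, and $\beta$. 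The strategy is then to convert these velocity bounds into per-step growth estimates for $\Mass$ and $\MRatio$ and iterate.

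For the mass, direct expansion of the pushforward $\tilde F_i:=\id+\delta t^i v^i$ gives
\begin{equation*}
\Mass(S^{i+1})=\int_{\supp(S^i)}|b|\,|\tau+\delta t^i\,\nabla_\tau v^i|\,\dd\Haus^1 \le \Mass(S^i)+\delta t^i\|b\|_{L^\infty}\|\nabla_\tau v^i\|_{L^1},
\end{equation*}
so summing over $i$ yields linear-in-$t$ growth $\Mass(S^i)\le \Mass(S^0)+K t^i$, which stays strictly below $M$ whenever $t^i<(M-\Mass(S^0))/K$. For the mass ratio, fix $s'\in\supp(S^{i+1})$ and any $s_0\in\supp(S^i)\cap\tilde F_i^{-1}(s')$. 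A triangle inequality gives $\tilde F_i^{-1}(\overline{B_r(s')})\cap\supp(S^i)\subset\overline{B_{r+2\eta_i}(s_0)}$ with $\eta_i:=\delta t^i\|v^i\|_{L^\infty}$, and the Jacobian bound $|D\tilde F_i[\tau]|\le 1+\delta t^i\|\nabla_\tau v^i\|_{L^\infty}$ then yields
\begin{equation*}
\Mass(S^{i+1}\Rstr\overline{B_r(s')}) \le (1+\delta t^i\|\nabla_\tau v^i\|_{L^\infty})\,\MRatio(S^i)\,(r+2\eta_i).
\end{equation*}
For $r\ge\eta_i$ this controls $\Mass/r$ by $3(1+C\delta t^i)\MRatio(S^i)$, as required.

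For $r<\eta_i$ the naive triangle-inequality bound degrades, and here I would instead use the propagated $C^{1,\gamma}$ regularity of $S^i$ supplied by Lemma~\ref{th:Regularity} together with local injectivity of $\tilde F_i$ (ensured when $\delta\|\nabla_\tau v^i\|_{L^\infty}<1$) to realize $S^{i+1}$ near $s'$ as a $C^{1,\gamma}$-diffeomorphic image of $S^i$ near $s_0$; a Taylor expansion then bounds the small-$r$ ratio by $(1+C'\delta t^i)$ times the local density at $s_0$, which is in turn $\le\MRatio(S^i)$. Combining the two regimes produces $\MRatio(S^{i+1})\le(1+C''\delta t^i)\MRatio(S^i)$, and iterating gives $\MRatio(S^i)\le e^{C''t^i}\MRatio(S^0)$. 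Choosing $T$ small enough that $\Mass(S^0)+KT<M$ and $\MRatio(S^0)e^{C''T}<\rho$ produces $T^*\ge T$. The main obstacle is precisely this small-radius $\MRatio$ analysis: controlling $\Mass(S^{i+1}\Rstr\overline{B_r(s')})/r$ when $r$ drops below the single-step displacement scale $\eta_i$ requires exploiting the $C^{1,\gamma}$ regularity propagated by Lemma~\ref{th:Regularity} and the local injectivity of the near-identity map $\tilde F_i$, both with constants that remain uniform across many iterations, which in turn constrains how small $\delta$ must be chosen.
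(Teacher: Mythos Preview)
Your mass bound is essentially the paper's argument and is fine. The mass-ratio argument, however, has a genuine gap. Splitting at the threshold $r=\eta_i=\delta t^i\|v^i\|_{L^\infty}$ does not give a per-step factor of the form $1+O(\delta t^i)$: at $r=\eta_i$ your large-$r$ estimate yields $(1+2\eta_i/r)(1+C\delta t^i)\approx 3$, and no amount of small-$r$ $C^{1,\gamma}$ analysis repairs this, since the problematic radii lie \emph{above} your threshold. The claimed conclusion $\MRatio(S^{i+1})\le(1+C''\delta t^i)\MRatio(S^i)$ therefore does not follow, and iterating a factor of $3$ over $K\sim T/\delta$ steps blows up.

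The fix, which is what the paper does, is to replace your additive containment $\tilde F_i^{-1}(\overline{B_r(s')})\subset\overline{B_{r+2\eta_i}(s_0)}$ (coming from the $L^\infty$ bound on $v^i$) by a \emph{multiplicative} one coming from the Lipschitz bound on $v^i$: since $\tilde F_i=\id+\delta t^i v^i$ with $v^i$ Lipschitz, $\tilde F_i$ is bi-Lipschitz with constant $\lambda=1+O(\delta t^i)$, and hence $\tilde F_i(S^i)\cap\overline{B_r(\tilde F_i(s))}\subset \tilde F_i\big(S^i\cap\overline{B_{\lambda r}(s)}\big)$ for \emph{all} $r>0$ simultaneously. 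Combining with the Jacobian bound gives $\MRatio(S^{i+1})\le\lambda^2\MRatio(S^i)$ directly, with no regime split and no appeal to $C^{1,\gamma}$ regularity. The small-$r$ obstacle you identified is thus an artefact of using an $L^\infty$ displacement bound where a Lipschitz bound is needed.
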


\begin{proof}
  The proof is divided into first obtaining a uniform bound on the mass growth, then using this
  bound to guarantee a bound on the growth of the mass ratio.\medskip

  \noindent
  \emph{Uniform mass bound.}
  Our first step is to establish a uniform bound on the mass.
  By definition, $U(t)_\#S^0 = (\id+(t-t_i)v^i)_\#S^i$ for $t\in(t_i,t_{i+1})$, and so
  \eqref{eq:LengthRateBound} implies
  \begin{align*}
    \Mass(U(t)_\#S^0) &= \Mass\b((\id+(t-t^i)v^i)_\#S^i\b)\\
    &=\|\tau^i+(t-t^i)\nabla_\tau v^i\|_{L^1(S^i)}\\
    &\leq \Mass(S^i)+(t-t^i) \frac{C\MRatio(S^i)\,\Mass(S^i)\|b\|_{L^\infty}}
      { \eps\min(\alpha,\beta)}\log\bg|1+\frac{2\,\Mass(S^i)}{\eps\,\MRatio(S^i)}\bg|.
  \end{align*}
  Estimating $\log|1+x|\leq x$ for $x\geq 0$, and employing the comparison principle for ODEs,
  we find that $\Mass(U(t)_\# S^0)$ must be bounded above by the solution to 
  \begin{equation*}
    m'(t) = \frac{2\,C\,
      \|b\|_{L^\infty}}{\min(\alpha,\beta)}\frac{m(t)^2}{\eps^2}
    \qquad m(0)=\Mass(S^0).
  \end{equation*}
  Since this is a separable ODE, we may check that 
  \begin{equation}\label{eq:MassBound}
    m(t)=\bg(\frac{1}{\Mass(S^0)}-\frac{2\,C\,t\,
      \|b\|_{L^\infty}}{\eps^2\min(\alpha,\beta)}\bg)^{-1}
  \end{equation}
  for all $t\in[0,T]$, and therefore it follows that 
  \begin{equation*}
    \Mass(U(t)_\#S^0)\leq M\quad\text{for all}\quad t\leq T(M)=C_1\frac{\b(M-\Mass(S^0)\b)}
    {M\,\Mass(S^0)},
  \end{equation*}
  where $C_1$ is a constant depending on $\|b\|_{L^\infty}$, $\eps$ and $\min(\alpha,\beta)$.
  \medskip

  \noindent
  \emph{Uniform mass ratio bound.}
  Suppose that $U(t)$ is an approximate solution on $[0,T(M)]$, with
  $\max_i\{\delta t^i\}\leq \delta\leq T(M)$. The bounds
  \eqref{eq:vUniformBound} and \eqref{eq:DvUniformBound} entail that for $t$ small enough,
  the map $\delta U^i(t):[t^i,t^{i+1}]\times S^i\to\R^3$ defined by $\delta U^i(t):=\id+(t-t^i)v^i$ is
  satisfies
  \begin{equation*}
    \frac{|\delta U^i(t,s_1)-\delta U^i(t,s_0)|}{|s_1-s_0|}
    \leq \bg|1+(t-t^i)\,C_2\MRatio(S^i)\log\B|1+\frac{2\,M}{\eps\,\MRatio(S^i)}\B|\bg|
  \end{equation*}
  for any $s_0,s_1\in S^i$ for some constant $C_2>0$. Moreover, since $\MRatio(S^i)\geq 1$ for any
  $S^i\neq\emptyset$, as observed in \eqref{eq:MRatioLowerBound}, we may bound the logarithmic
  terms by $\log|1+2M/\eps|$, finding that
  \begin{equation}\label{eq:LipschitzBound}
    \frac{|\delta U_i^t(s_1)-\delta U_i^t(s_0)|}{|s_1-s_0|}
    \leq \b|1+C_3\,(t-t^i)\MRatio(S^i)\b|.
  \end{equation}
  for some $C_3>0$.

  Next, recalling an argument made in \S3.2.17 of \cite{Federer}, supposing that
  $F:S\to\R^3$ satisfies
  \begin{equation*}
    \frac{|F(s_1)-F(s_0)|}{|s_1-s_0|} \leq \lambda \qquad\text{for all }s_0,s_1\in\overline{B_r(s)},
  \end{equation*}
  for some $\lambda>1$, it follows that
  \begin{equation*}
    F_\#S\cap \overline{B_r\b(F(s)\b)}\subset F_\#\b(S\cap \overline{B_{\lambda r}(s)}\b),
  \end{equation*}
  and therefore
  \begin{equation*}
    \frac{\Haus^1\B(F_\#S\cap \overline{B_{r}\b(F(s)\b)}\B)}{r}
    \leq \lambda^2\frac{\Haus^1\b(S\cap\overline{B_{\lambda r}(s)}\b)}{\lambda\,r}.
  \end{equation*}
  Noting the connection between the Hausdorff measure of the support of an integral current and
  its mass made in \eqref{eq:MassvsMeasure}, we may take suprema over $r\geq0$ and $s\in\R^3$
  to obtain
  \begin{equation}\label{eq:MassRatioGrowth}
    \MRatio(F_\#S)
    \leq \lambda^2 \MRatio(S).
  \end{equation}
  In particular, $\delta U^i(t)$ satisfies the conditions on $F$
  with
  \begin{equation*}
    \lambda=1+ C_3\,(t-t^i)\,\MRatio(S^i),
  \end{equation*}
  and so we find that
  \begin{equation*}
    \frac{\MRatio\b(\delta U^i(t)_\#S^i\b)-\MRatio(S^i)}{t-t^i}
    \leq C_3\b(2+C_3\,\delta\,\MRatio(S^i)\b) \MRatio(S^i)^2.
  \end{equation*}
  Employing the comparison principle for ODEs once more, and using the fact that, by construction,
  $\MRatio(U(t)_\#S^0)=\MRatio(\delta U^i(t)_\#S^i)$, we find that $\MRatio(U(t)_\#S^0)$ is bounded above
  by the solution to
  \begin{equation*}
    \rho'(t) \leq C_3\b(2+C_3\delta \rho(t)\b)\rho(t)^2,\qquad\text{with}\quad\rho(0)=\MRatio(S^0).
  \end{equation*}
  Again, being a separable ODE, we may integrate to find
  \begin{equation*}
    \frac{1}{2C_3 \MRatio(S^0)} - \frac\delta 4 \log\bg|C_3\delta + \frac{2}{\MRatio(S^0)}\bg|-\frac{1}{2C_3 \rho(t)} + \frac\delta 4 \log\bg|C_3\delta + \frac{2}{\rho(t)}\bg| = t,
  \end{equation*}
  and thereby we see that for any
  \begin{equation*}
    t\leq T(\rho,M,\delta) = \frac{\rho-\MRatio(S^0)}{2C_3(M)\MRatio(S^0)}
    +\frac\delta 4 \log\bg|\frac{C_3(M)\delta + 2/\rho}{C_3(M)\delta+2/\MRatio(S^0)}\bg|,
  \end{equation*}
  the result holds.
\end{proof}

The result of Lemma~\ref{th:MassMRatioGrowthBounds} entails that, given an upper limit on the
mass and mass ratio, and a maximum step size, there exists an infinite family of approximate
solutions on some time interval $[0,T]$ with $T>0$. This is a crucial result which allows us to
establish existence by compactness in the following lemma.

\begin{lemma}
  \label{th:Existence}
  Suppose $S^0\in\Adm$ is $C^{1,\gamma}$ for some $\gamma\in(0,1]$ and fix $\rho$ and $M$ such that
  $\Mass(S^0)<M<+\infty$ and $\MRatio(S^0)<\rho<+\infty$; then there exists $T>0$ such that
  there exists a unique $C^0\b([0,T];C^1(S^0;\R^3)\b)$ solution to \eqref{eq:DDD}, and
  moreover $\Mass(U(t)_\#S^0)\leq M$ and $\MRatio(U(t)_\#S^0)\leq \rho$ for all $t\in[0,T]$.
\end{lemma}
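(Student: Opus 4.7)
The plan is to combine the a~priori estimates of Lemmas~\ref{th:vExistence}, \ref{th:Regularity}, and \ref{th:MassMRatioGrowthBounds} with an Arzel\`a--Ascoli compactness argument to extract a limit of approximate solutions, and then to exploit the continuity result \eqref{eq:PKContinuity} together with the strict convexity of $\Psi$ to identify the limit as a solution and to establish uniqueness.

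First I would fix $\rho$ and $M$ as in the statement and apply Lemma~\ref{th:MassMRatioGrowthBounds} to select $\delta>0$ and $T>0$ such that every approximate solution with $\max_i\{\delta t^i\}\leq\delta$ satisfies the mass and mass--ratio bounds on $[0,T]$. I would then consider a sequence of such approximate solutions $U_n$ corresponding to partitions with $\max_i\{\delta t_n^i\}\to 0$; each intermediate $S_n^i$ inherits $C^{1,\gamma}$ regularity by repeated application of Lemma~\ref{th:Regularity}, with quantitative bounds depending only on $M$, $\rho$, $\eps$, $\|b\|_{L^\infty}$, and the constitutive constants. The uniform bounds along $U_n$ combined with \eqref{eq:vUniformBound}--\eqref{eq:DvCalpha} yield uniform $C^{1,\gamma}$ control on the velocities $v_n^i$; pulling back to the fixed domain $S^0$, this transfers to equi-boundedness of $U_n$ in $C^0\b([0,T];C^{1,\gamma}(S^0;\R^3)\b)$, while \eqref{eq:vUniformBound} provides equicontinuity in time since $\dot U_n$ is uniformly bounded in $L^\infty$. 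Arzel\`a--Ascoli then furnishes a subsequence converging in $C^0\b([0,T];C^1(S^0;\R^3)\b)$ to a limit $U$, which inherits the mass and mass--ratio bounds by lower semicontinuity.

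The main obstacle is passing to the limit in the subdifferential inclusion \eqref{eq:DDD}, since both $\Psi(\cdot,v)$ and $\PK$ depend on the evolving configuration. I would treat this via the equivalent variational formulation \eqref{eq:TstepMinProb}: since $U_n(t)\to U(t)$ in $C^1(S^0;\R^3)$, the continuity estimate \eqref{eq:PKContinuity} of Theorem~\ref{th:Force} ensures that the pulled-back Peach--Koehler forces converge uniformly, and assumption {\bf(R)} together with the smooth dependence of $A$ and $\psi$ on $(b,\tau)$ gives continuous dependence of $\Psi$ on its first argument along the sequence. Combining this with the strict convexity and the coercivity bounds of {\bf(C$_1$)} and {\bf(G)}, a standard $\Gamma$-convergence argument shows that the minimisers $v_n^t$, pulled back to $S^0$, converge strongly in $H^1(S^0;\R^3)$ to a $v^t$ which minimises the limiting problem and hence satisfies \eqref{eq:DDD}; the identity $U(t)^\# v^t=\dot U(t)$ follows from passage to the limit in the piecewise--affine construction $U_n(t)=U_n(t^i)^\#\b(\id+(t-t^i)v_n^i\b)$.

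Finally, uniqueness would follow from a Gr\"onwall argument. Given two solutions $U_1,U_2$ emanating from the same initial datum, I would test the difference of the two inclusions (after pulling back to $S^0$) against $\dot U_1-\dot U_2$, using the strong monotonicity of $\partial_v\Psi$ that follows from {\bf(C$_1$)} and {\bf(G)} to obtain a coercive term of order $\|\dot U_1-\dot U_2\|_{H^1}^2$ on one side, and the Lipschitz-type continuity \eqref{eq:PKContinuity} of $\PK$ together with the smoothness of $A$ and $\psi$ in $\tau$ to control the right--hand side by $\|U_1-U_2\|_{C^1(S^0;\R^3)}$; integrating in time and invoking Gr\"onwall's inequality then forces $U_1\equiv U_2$ on $[0,T]$.
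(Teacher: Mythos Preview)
Your overall strategy---uniform $C^{1,\gamma}$ bounds from Lemmas~\ref{th:vExistence}--\ref{th:MassMRatioGrowthBounds}, Arzel\`a--Ascoli compactness in $C^0\b([0,T];C^1(S^0;\R^3)\b)$, and passage to the limit using \eqref{eq:PKContinuity} together with the convexity of $\Psi$---matches the paper's route closely. The main technical difference in the limit step is that, rather than arguing via $\Gamma$-convergence of the minimisation problems, the paper recasts \eqref{eq:DDD} as the Fenchel identity $\Psi(S,v)+\Psi^*\b(S,-D\Phi^\eps(S)\b)+\langle D\Phi^\eps(S),v\rangle=0$ and passes to the limit in each of the three terms separately: weak lower semicontinuity of $\Psi$ along the (weakly convergent) pulled-back velocities, strong convergence of the pulled-back $\PK$ from \eqref{eq:PKContinuity}, and continuity of the conjugate $\Psi^*$ in its first argument. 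Your $\Gamma$-convergence formulation is a legitimate variant, though note that strong $H^1$ convergence of the minimisers is an extra claim beyond what $\Gamma$-convergence alone gives; it would follow from convergence of the minimum values combined with the uniform convexity coming from {\bf(C$_1$)} and {\bf(G)}, but this step should be made explicit.

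Where your proposal departs more substantially is uniqueness. The paper does \emph{not} run a Gr\"onwall comparison of two solutions; it only argues that the limit of the approximation scheme is subsequence-independent, which is immediate from strict convexity of $\Psi(S,\cdot)$ (the limiting minimiser at each time is unique, so all subsequential limits of $U_n(t)^\# v_n^t$ coincide, hence the full sequence converges). Your Gr\"onwall sketch, while aiming at a stronger statement, has a norm mismatch as written: monotonicity of $\partial_v\Psi$ yields control of $\|\dot U_1-\dot U_2\|_{H^1(S^0)}$, but the right-hand side---via \eqref{eq:PKContinuity} and the $\tau$-dependence of $A$ and $\psi$---is governed by $\|U_1-U_2\|_{C^1(S^0)}$, i.e.\ an $L^\infty$ bound on $\nabla_\tau(U_1-U_2)$. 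Since $H^1$ on a one-dimensional set embeds only into $C^{0,1/2}$, integrating the $H^1$ velocity bound in time does not recover the $C^1$ norm needed to close the loop. To make your argument work you would have to upgrade the comparison to the $C^1$ level, for instance by exploiting the ODE structure in the proof of Lemma~\ref{th:Regularity} for both solutions to obtain a Lipschitz estimate for $v^t$ in $C^1$ with respect to the configuration.
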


\begin{proof}
  In Lemma~\ref{th:MassMRatioGrowthBounds}, we established the existence of $T$ such that if
  $U_n:[0,T]\times\R^3$ is an approximation solution of DDD with $\max_i\{\delta t^i_n\}\leq\delta$,
  then $\Mass\b(U_n(t)_\#S^0\b)\leq M$ and $\MRatio\b(U_n(t)_\#S^0\b)\leq \rho$. Therefore, taking
  a sequence of such solutions with $\max_i\{\delta t^i_n\}\to 0$, we wish to show that this
  sequence contains a convergent subsequence via an application of the Arzel\`a--Ascoli
  Theorem (or equivalently, the compactness properties of H\"older spaces).\medskip

  \noindent
  \emph{Compactness.}
  When combined with the result of Lemma~\ref{th:MassMRatioGrowthBounds}, the bounds established
  in Lemma~\ref{th:Regularity} entail that
  \begin{equation*}
    \b\|\dot{U}_n(t)\b\|_{C^{1,\gamma}}\leq C(M,\rho,\delta)\quad\text{for all }n\in\N;
  \end{equation*}
  in turn, this entails that
  \begin{equation}\label{eq:LipschitzC1gamma}
    \b\|U^{t_1}_n-U^{t_0}_n\b\|_{C^{1,\gamma}}\leq \int_{t_0}^{t_1}\b\|\dot{U}_n(t)\b\|_{C^{1,\gamma}}\dd t
    \leq C(M,\rho,\delta)\,|t_1-t_0|.
  \end{equation}
  Setting $t_0=0$, it follows that $U_n:[0,T]\times S^0\to\R^3$ are uniformly bounded in
  $C^{0,1}([0,T];C^{1,\gamma}(S^0;\R^3))$, from which it follows that the sequence of approximate
  solutions is compact in $C^0\b([0,T];C^1(S^0;\R^3)\b)$. We may therefore extract a convergent
  subsequence, which we do not relabel; we denote the limit $U_\infty$.

  Moreover, the same results also imply that
  $U_n(t)^\#v^t_n$ is uniformly bounded in $L^2\b([0,T];H^1(S^0;\R^3)\b)$. It follows that we may
  extract a further subsequence (which again, we do not label) such that $U_n(t)^\#v^t_n$ converges
  weakly in $H^1(S^0;\R^3)$ for almost every $t\in[0,T]$.

  Now that we have demonstrated the existence of a candidate limit, we must demonstrate that it
  solves \eqref{eq:DDD}
  \medskip

  \noindent
  \emph{Convergence of dissipation potential.}
  By virtue of the fact that $U_n\to U_\infty$ in $C^{0,1}\b([0,T];C^1(S^0)\b)$,
  \begin{equation*}
    U_n(t)^\#\tau_n(t) = \nabla_\tau U_n(t)\to \nabla_\tau U_\infty(t)=U_\infty(t)^\#\tau_\infty(t),\quad\text{uniformly for }t\in[0,T]\text{ as }n\to\infty,
  \end{equation*}
  where $\tau_n(t):U_n(t)_\# S^0\to\Sph^2$ and $\tau_\infty(t):U_\infty(t)_\#S^0\to\Sph^2$ are tangent
  fields.
  Recalling the definition made in \eqref{eq:Timestep}, by construction,
  approximate solutions satisfy
  \begin{equation*}
    \partial_v\Psi\b(U_n(t^i_n)_\#S^0,U_n(t^i_n)^\#\dot{U}_n(t^i_n)\b)
    =\PK(U_n(t^i_n)_\#S^0)
  \end{equation*}
  for each $t^i_n$.

  Applying assumption {\bf(R)}, we have
  \begin{equation*}
    A\b(U_n(t)^\# b,U_n(t)^\#\tau_n\b)\to A\b(U_\infty(t)^\# b,U_\infty(t)^\# \tau_\infty\b)
    \quad\text{in }L^\infty(S^0;\R^{3\times 3})
  \end{equation*}
  uniformly in $t$ as $n\to\infty$, and therefore if $V_n$ is a sequence of functions in
  $H^1(S;\R^3)$ such that $\nabla_\tau V_n\to \nabla_\tau V_\infty$ weakly
  in $L^2(S^0;\R^3)$, we have
  \begin{multline}\label{eq:LSCATerm}
    \liminf_{n\to\infty}\int_{S^0}\tfrac12 A\b(U_n(t)^\#b,U_n(t)^\#\tau_n(t)\b):
    [\nabla_\tau V_n,\nabla_\tau V_n]\,\dd\Haus^1\\
    \geq \int_{S^0}\tfrac12 A\b(U_\infty(t)^\#b,U_\infty(t)^\#\tau_\infty(t)\b):
    [\nabla_\tau V_\infty,\nabla_\tau V_\infty]\,\dd\Haus^1.
  \end{multline}
  Furthermore, the convexity and regularity assumptions {\bf(C$_2$)} and {\bf(R)} imply that if
  $V_n\in L^2(S^0;\R^3)$ is a sequence of functions such that $V_n\to V_\infty$ weakly in
  $L^2(S^0;\R^3)$ as  $n\to\infty$, then
  \begin{equation}\label{eq:varphiConvergence}
    \liminf_{n\to\infty}\int_{S^0}\psi\b(U_n(t)^\#b,U_n(t)^\#\tau_n(t),V_n\b)\dd\Haus^1
    \geq \int_{S^0}\psi\b(U_\infty(t)^\#b,U_\infty(t)^\#\tau_\infty(t),V_\infty\b)\dd\Haus^1.
  \end{equation}
  Together, \eqref{eq:LSCATerm}, \eqref{eq:varphiConvergence} and the fact that
  $U_n(t)^\# v^t_n$ converges weakly in $L^2([0,T];H^1(S^0;\R^3))$ imply that
  \begin{equation}\label{eq:EntropyConvergence}
    \liminf_{n\to\infty} U_n(t)^\#\Psi\b(U_n(t)_\# S^0;v_n\b) \geq  U_\infty(t)^\#\Psi\b(U_\infty(t)_\# S^0; v_\infty\b)
  \end{equation}
  for almost every $t\in[0,T]$.\medskip
  
  \noindent
  \emph{Convergence of Peach--Koehler force.}
  Applying Lemma~\ref{th:PKContinuity} and the fact that $U_n(t)\to U_\infty(t)$ in $C^{1}(S^0;\R^3)$
  uniformly in $t$, we find that
  \begin{equation}\label{eq:PKConvergence}
    U_n(t)^\#\PK\b(U_n(t)_\#S^0\b)\to U_\infty(t)^\#\PK\b(U_\infty(t)_\#S^0\b)
    \quad\text{in }L^\infty(S^0;\R^3)
  \end{equation}
  as $n\to\infty$ uniformly in $t$. Since $U_n(t)^\# v_n^t\to U_\infty(t)^\# v_\infty^t$ weakly in
  $L^2\b([0,T];H^1(S^0;\R^3)\b)$, we further obtain that
  \begin{equation}\label{eq:EnergyDecreaseConvergence}
    \b(U_n(t)^\#\PK\b(U_n(t)_\#S^0\b),U_n(t)^\# v^t_n\b)_{L^2}\to
    \b(U_\infty(t)^\#\PK\b(U_\infty(t)_\#S^0\b),U_\infty(t)^\# v^t_\infty\b)_{L^2}
  \end{equation}
  as $n\to\infty$ for almost every $t\in[0,T]$.
  \medskip

  \noindent
  \emph{Convergence of dissipation potential.}
  Since $\Psi(S,\cdot)$ is a strictly convex functional defined on $H^1(S;\R^3)$, it follows
  that it has a convex conjugate $\Psi^*(S,\cdot):H^1(S;\R^3)^*\to\R$, defined to be
  \begin{equation*}
    \Psi^*(S,\xi) = \sup\b\{\<\xi,v\>-\Psi(S,v):v\in H^1(S;\R^3)\b\}.
  \end{equation*}
  Employing this definition, we obtain
  \begin{equation*}
    \begin{aligned}
      \lim_{n\to\infty} U_n(t)^\#\Psi^*(U_n(t)_\#S^0,\xi) &= \lim_{n\to\infty}\,
      \sup\b\{\<\xi,v\>-U_n(t)^\#\Psi(U_n(t)_\#S^0,v):v\in H^1(S;\R^3)\b\}\\
      &=\sup\{\<\xi,v\>-\lim_{n\to\infty}U_n(t)^\#\Psi(U_n(t)_\#S^0,v):v\in H^1(S;\R^3)\b\}\\
      &= \sup\b\{\<\xi,v\>-U_\infty(t)^\#\Psi(U_\infty(t)_\#S^0,v):v\in H^1(S;\R^3)\b\}\\
      &=U_\infty(t)^\#\Psi^*(U_\infty(t)_\#S^0,\xi).
    \end{aligned}
  \end{equation*}
  Moreover, since \eqref{eq:PKConvergence} holds, it follows that
  \begin{equation*}
    U_n(t)^\#D\Psi^\eps(U_n(t)_\#S^0)
  \to U_\infty(t)^\#D\Psi^\eps(U_\infty(t)_\#S^0)\quad\text{in }H^1(S^0;\R^3)^*,
  \end{equation*}
  and we conclude that
  \begin{equation}\label{eq:DissConvergence}
    U_n(t)^\#\Psi\B(U_n(t)_\#S^0,U_n(t)^\#\PK(U_n(t)_\#S^0)\B)\to
    U_\infty(t)^\#\Psi\B(U_\infty(t)_\#S,U_\infty(t)^\#\PK\b(U_\infty(t)_\#S^0\b)\B)
  \end{equation}
  as $n\to\infty$.\medskip

  \noindent
  \emph{Conclusion.}
  Finally, by using standard properties of the Legendre--Fenchel transform \cite{Rockafellar,AGS,M06},
  we note that \eqref{eq:DDD} is equivalent to requiring that
  \begin{equation*}
    U(t)^\#\Psi\b(U(t)_\# S^0, v^t\b)+U(t)^\#\Psi^*\B(U(t)_\# S^0,-D\Psi^\eps\b(U(t)_\# S^0\b)\B)-\B\< D\Psi^\eps\b(U(t)_\#S^0),v^t\B\>=0
  \end{equation*}
  for almost every $t\in[0,T]$.
  By considering this expression with $U_n(t)$ in place of $U(t)$, and $v^t_n$
  in place of $v^t$, we may combine \eqref{eq:EntropyConvergence},
  \eqref{eq:EnergyDecreaseConvergence} and \eqref{eq:DissConvergence}, to pass to the
  limit, demonstrating that
  \begin{multline*}
    0 = \int_0^T \bg[U_\infty(t)^\#\Psi\b(U_\infty(t)_\# S^0;v^t_\infty\b)
    +U_\infty(t)^\#\Psi\b(U_\infty(t)_\#S^0,U_\infty(t)^\#\PK(U_\infty(t)_\#S^0)\b)\\
    +\B(U_\infty(t)^\#\PK\b(U_\infty(t)_\#S^0\b),U_\infty(t)^\# v^t_\infty\B)_{L^2}\bg]\dd t.
  \end{multline*}
  This entails that, for almost every $t$, we have
  \begin{equation*}
    \partial_v\Psi\b(U_\infty(t)_\# S^0;v^t_\infty\b)\ni\PK\b(U_\infty(t)_\#S^0\b),
  \end{equation*}
  and since $U_\infty(t)^\#v^t_\infty = \lim_{n\to\infty} U_n(t)^\#v^t_n = \lim_{n\to\infty}\dot{U}_n(t)
  = \dot{U}_\infty(t)$, we have proved that $U_\infty$ solves \eqref{eq:DDD}.

  To demonstrate uniqueness of the limit, we note that assumptions {\bf(C$_1$)} and {\bf(C$_2$)}
  entail that the functional $\Psi(S,v)$ is strictly convex on $H^1(S;\R^3)$, and therefore a
  standard argument guarantees that the above limit procedure is independent of the subsequence
  chosen.
\end{proof}

\subsection{Conclusion of the proof}
Now that we have proved existence for a finite time, we show that we may extend the solution to a
possibly infinite time by demonstrating that the total mass of the dislocation configuration is
bounded as long as the mass ratio remains bounded.

\begin{lemma}
  \label{th:MassGrowthBound}
  If $U:[0,T]\times S^0\to\R^3$ is a solution of DDD in the sense described in
  \eqref{eq:DDD}, such that $\MRatio(U(t)_\#S^0)\leq \rho$ for all $t\in[0,T]$, then
  we have the uniform bound
  \begin{gather*}
    \Mass(U(t)_\#S^0)\leq \min\bg\{\frac{\Mass(S^0)}{1-C_1(b,\eps)\,\Mass(S^0)\,T},\frac{\eps}{2} \bg|1 + \frac{2\,\Mass(S^0)}{\eps}\bg|^{\exp(C_2(b,\eps,\rho)T)}\bg\},\\
    \quad\text{where}
    \quad C_1(b,\eps) = \frac{2\,C\,
      \|b\|_{L^\infty}}{\eps^2\min(\alpha,\beta)}\quad\text{and}\quad
    C_2(b,\eps,\rho) = \frac{C\,\rho\,
      \|b\|_{L^\infty}}{\eps\min(\alpha,\beta)}.
  \end{gather*}
\end{lemma}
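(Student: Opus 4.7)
The plan is to reduce the statement to two scalar ODE comparisons for $m(t) := \Mass(U(t)_\#S^0)$. The starting point is a differential inequality for $m$ obtained by differentiating the mass through the Lagrangian representation. Since $U(t,s) = U(t)(s)$ is $C^1$ in $s$ and absolutely continuous in $t$ (with $\dot{U}(t) = U(t)^\# v^t$), one has the parametric formula
\begin{equation*}
  m(t) = \int_{S^0} |\nabla_\tau U(t,s)|\,\dd\Haus^1(s),
\end{equation*}
and differentiating in $t$ gives $m'(t) \leq \int_{S^0} |\nabla_\tau \dot U(t,s)|\,\dd\Haus^1(s)$. Expanding $\nabla_\tau \dot U(t,s) = Dv^t(U(t,s))[\nabla_\tau U(t,s)]$ and using that $\nabla_\tau U$ is parallel to the tangent of $U(t)_\# S^0$ converts the right-hand side (via the area formula) into $\|\nabla_\tau v^t\|_{L^1(U(t)_\#S^0)}$. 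Applying the a priori bound \eqref{eq:LengthRateBound} from the Corollary following Lemma~\ref{th:vExistence} then yields the master inequality
\begin{equation*}
  m'(t) \;\leq\; \frac{C\,\|b\|_{L^\infty}}{\eps\,\min(\alpha,\beta)}\,\MRatio\b(U(t)_\#S^0\b)\,m(t)\,\log\bg|1+\frac{2\,m(t)}{\eps\,\MRatio(U(t)_\#S^0)}\bg|.
\end{equation*}

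For the first bound, I discard the $\MRatio$ information and use $\log|1+x|\leq x$ with $x = 2m/(\eps\,\MRatio)$ to cancel $\MRatio$ entirely. This produces the autonomous Riccati-type inequality $m'(t) \leq C_1(b,\eps)\,m(t)^2$, whose comparison solution is exactly $\Mass(S^0)/(1 - C_1\,\Mass(S^0)\,t)$, valid up to the blow-up time.

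For the second bound, I use the hypothesis $\MRatio(U(t)_\#S^0)\leq \rho$ together with $\MRatio\geq 1$ (from \eqref{eq:MRatioLowerBound}) to simplify the logarithmic factor and obtain
\begin{equation*}
  m'(t) \;\leq\; C_2(b,\eps,\rho)\, m(t)\,\log\bg|1+\frac{2\,m(t)}{\eps}\bg|.
\end{equation*}
This is separable: the substitution $y(t) := \log|1 + 2m(t)/\eps|$ gives $m = \tfrac{\eps}{2}(e^y-1)$ and $m' = \tfrac{\eps}{2}e^y y'$, so the inequality collapses to $y'(t) \leq C_2\,y(t)$ after using $e^y - 1 \leq e^y$. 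Grönwall then gives $y(t) \leq y(0)\,e^{C_2 t}$, which on exponentiation is exactly the stated bound $m(t) \leq \tfrac{\eps}{2}|1+2\Mass(S^0)/\eps|^{\exp(C_2 t)}$.

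The main technical step is the first one: justifying that the mass is differentiable in time with the claimed pointwise bound. The regularity available from Lemma~\ref{th:Regularity} (together with the qualitative fact $U \in C^0([0,T];C^1(S^0;\R^3))$ proved in Lemma~\ref{th:Existence} and $\dot U = U(t)^\# v^t \in L^2_t H^1_s$) is enough to make $m$ absolutely continuous and to carry out the area-formula computation at almost every $t$, after which the two ODE comparisons are routine. The bounds are then combined by taking the minimum.
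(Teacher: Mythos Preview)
Your proposal is correct and follows essentially the same route as the paper: derive the differential inequality for $m(t)=\Mass(U(t)_\#S^0)$ from \eqref{eq:LengthRateBound}, then perform two ODE comparisons---one using $\log(1+x)\le x$ to obtain the Riccati bound (which the paper simply cites from \eqref{eq:MassBound}), and one using $1\le\MRatio\le\rho$ followed by the separable substitution $y=\log(1+2m/\eps)$ (the paper does the equivalent step by first passing to the comparison ODE $m'=\tfrac{C_2}{2}(1+2m/\eps)\log|1+2m/\eps|$ and integrating exactly). Your treatment of the mass--differentiation step via the area formula is in fact more explicit than the paper's, which just asserts the differential inequality.
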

\begin{proof}
  Using the fact that $v^t$ must satisfy \eqref{eq:LengthRateBound}, and by assumption,
  $1\leq \MRatio(U(t)_\#S^0)\leq\rho$, we have
  \begin{align*}
    \frac{\dd}{\dd t}\Mass(U(t)_\#S^0) \leq \frac{C\rho\,\Mass(U(t)_\#S^0)\|b\|_{L^\infty}}
      { \eps\min(\alpha,\beta)}\log\bg|1+\frac{2\,\Mass(U(t)_\#S^0)}{\eps}\bg|.
  \end{align*}
  Employing the comparison principle for
  ODEs, it follows that for all $t\in[0,T]$, $\Mass(U(t)_\# S^0)$ must be bounded above by the
  solution to
  \begin{equation*}
    m'(t)=\frac{C\,\rho\,
      \|b\|_{L^\infty}}{2\min(\alpha,\beta)}\bg(1+\frac{2m(t)}{\eps}\bg)\log\bg|1+\frac{2\,m(t)}{\eps}\bg|,
    \qquad m(0)=\Mass(S^0).
  \end{equation*}
  Integrating, we find that
  \begin{gather*}
    \log\bg|1 + \frac{2\,m(t)}{\eps}\bg| = \log\bg|1 + \frac{2\,\Mass(S^0)}{\eps}\bg|
    \exp\bg(\frac{C\,t\,\rho\,\|b\|_{L^\infty}}{\eps\min(\alpha,\beta)}\bg)
  \end{gather*}
  for all $t\in[0,T]$, which, when combined with \eqref{eq:MassBound} which was used to bound the
  mass in the proof of Lemma~\ref{th:MassMRatioGrowthBounds} now immediately yields the bound
  stated.
\end{proof}

With this result in place, we now deduce the following result, which complete the proof of Theorem~\ref{th:WellPosedness}.

\begin{corollary}
  If $S^0\in\Adm$ satisfies $\MRatio(S^0)<+\infty$, then there exists a unique solution
  $(U,\{v^t\})$ of \eqref{eq:DDD} up until $T:=\sup\b\{t\in\R:\MRatio(U(s)_\#S^0)<+\infty\text{ for all }s\leq t\b\}$.
\end{corollary}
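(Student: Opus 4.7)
The plan is to combine Lemma~\ref{th:Existence} with Lemma~\ref{th:MassGrowthBound} via a standard continuation argument, then invoke strict convexity once more to upgrade local uniqueness to global uniqueness.

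First I would define
\begin{equation*}
  T^*:=\sup\b\{t\in[0,T]\bsep\text{there exists a solution of \eqref{eq:DDD} on }[0,t]\b\},
\end{equation*}
where $T$ is the time in the statement. Lemma~\ref{th:Existence} applied to the $C^{1,\gamma}$ initial datum $S^0$ (choosing any $M>\Mass(S^0)$ and $\rho>\MRatio(S^0)$) produces a solution on some interval $[0,T_0]$ with $T_0>0$, so $T^*>0$. The goal is to show $T^*=T$.

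Assume for contradiction that $T^*<T$. By definition of $T$ we have $\MRatio(U(s)_\#S^0)<+\infty$ for every $s<T^*$; fix $\rho$ strictly greater than $\sup_{s\in[0,T^*]}\MRatio(U(s)_\# S^0)$, which is finite by Lemma~\ref{th:MassGrowthBound} combined with the explicit $\rho$-bound used inside the proof of Lemma~\ref{th:MassMRatioGrowthBounds} (the mass ratio of the limit cannot exceed the uniform bound obtained on the approximating sequence). By Lemma~\ref{th:MassGrowthBound}, this also yields a uniform upper bound $M$ on $\Mass(U(s)_\#S^0)$ for $s\in[0,T^*]$. Using the regularity estimates \eqref{eq:vUniformBound}--\eqref{eq:DvCalpha} from Lemma~\ref{th:Regularity}, the family $\{U(s)\}_{s\in[0,T^*)}$ is uniformly bounded in $C^{1,\gamma}(S^0;\R^3)$ and uniformly Lipschitz in $s$ as a map into $C^{1,\gamma}(S^0;\R^3)$ (compare \eqref{eq:LipschitzC1gamma}). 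Hence $U(s)$ extends by continuity to $s=T^*$, with $U(T^*)\in C^{1,\gamma}(S^0;\R^3)$, and $U(T^*)_\# S^0$ is a $C^{1,\gamma}$ admissible dislocation configuration with $\Mass\leq M$ and $\MRatio\leq\rho$. Applying Lemma~\ref{th:Existence} with initial datum $U(T^*)_\# S^0$ produces a solution on $[T^*,T^*+\delta]$ for some $\delta>0$, and concatenating with $U|_{[0,T^*]}$ yields a solution of \eqref{eq:DDD} on $[0,T^*+\delta]$, contradicting the maximality of $T^*$. Therefore $T^*=T$.

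For uniqueness on $[0,T]$, suppose $(U_1,\{v_1^t\})$ and $(U_2,\{v_2^t\})$ are two solutions. By Lemma~\ref{th:Existence}, they coincide on some maximal initial interval $[0,T^\dagger]$; if $T^\dagger<T$, then by the extension argument above both solutions remain $C^{1,\gamma}$ at $T^\dagger$ and take the same value there, so applying the local uniqueness in Lemma~\ref{th:Existence} with initial datum $U_1(T^\dagger)_\#S^0=U_2(T^\dagger)_\#S^0$ forces agreement on a slightly larger interval, contradicting the definition of $T^\dagger$. The main subtlety is the passage $s\uparrow T^*$: I must verify that the uniform $C^{1,\gamma}$ and Lipschitz-in-time bounds indeed survive up to the endpoint (rather than blowing up at $T^*$), which follows because the only way the right-hand sides of \eqref{eq:vUniformBound}--\eqref{eq:DvCalpha} can degenerate is through $\Mass$ or $\MRatio$, both of which remain controlled by the choice of $\rho$ and Lemma~\ref{th:MassGrowthBound}. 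This is the step where a careful bookkeeping of constants is required, but no new ingredient beyond the previous lemmas is needed.
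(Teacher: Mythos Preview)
Your continuation argument is a valid route and reaches the same conclusion, but it is organised differently from the paper's proof and has one soft spot worth flagging.

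The paper argues directly: for each fixed $\rho>\MRatio(S^0)$, Lemma~\ref{th:Existence} yields a unique solution up to the first time $\MRatio$ hits $\rho$; Lemma~\ref{th:MassGrowthBound} then says that on this interval the mass is automatically controlled, so the mass constraint never binds before the mass-ratio constraint does. Sending $\rho\to\infty$ gives existence up to the first time $\MRatio$ becomes infinite. This avoids ever having to speak about $\sup_{s<T^*}\MRatio(U(s)_\#S^0)$ or to bootstrap mass and mass-ratio bounds against one another.

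Your version instead defines a maximal existence time $T^*$ and argues by contradiction that $T^*=T$. The substantive steps (uniform $C^{1,\gamma}$ bounds, extension to the endpoint, restart via Lemma~\ref{th:Existence}, and uniqueness by the same local uniqueness) are all correct. The rough edge is your justification that $\sup_{s\in[0,T^*]}\MRatio(U(s)_\#S^0)<\infty$: you cite Lemma~\ref{th:MassGrowthBound}, but that lemma bounds $\Mass$ in terms of a given $\rho$-bound, not the other way round, and the $\rho$-bound inside Lemma~\ref{th:MassMRatioGrowthBounds} is for \emph{approximate} solutions on a specific short interval whose length depends on $M$. There is also a mild circularity in writing ``assume $T^*<T$'', since $T$ is defined through the very solution whose existence is in question. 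The clean fix is to rephrase the contradiction hypothesis as ``suppose $\limsup_{s\uparrow T^*}\MRatio(U(s)_\#S^0)<\infty$'': then Lemma~\ref{th:MassGrowthBound} gives the mass bound, the $C^{1,\gamma}$ estimates follow, and the restart goes through. This is exactly what the paper's $\rho\to\infty$ argument encodes, just packaged differently.
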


\begin{proof}
  For any $\rho>\MRatio(S^0)$, and $M>\Mass(S^0)$ Lemma~\ref{th:Existence} establishes the existence
  of a unique solution up until the first time $T(M,\rho)$ at which either $\Mass(U(T)_\#S^0)=M$ or
  $\MRatio(U(T)_\#S^0)=\rho$, and we note that this existence time is bounded below by a function
  which is monotone in both $M$ and $\rho$. Lemma~\ref{th:MassGrowthBound} entails that in fact
  $\Mass(U(t)_\# S^0)$ is finite for any $t\leq T^*(\rho)$, where $T^*$ is the first time at which
  $\MRatio(U(T^*)_\#S^0)=\rho$, and thus the maximal existence time is independent of the mass
  constraint. Letting $\rho\to\infty$, we obtain the result.
\end{proof}

\section*{Acknowledgements}
\noindent
\emph{Thanks.}
Thanks go to numerous people who I have discussed this project with over the course of working on
it, including Julian Braun, Maciej Buze, Lucia de Luca, Steve Fitzgerald (who first highlighted \cite{BBS80} to me), Adriana Garroni, James Kermode, Christoph Ortner, Mark Peletier, Ed Tarleton, Florian Theil, and Patrick van Meurs.\medskip

\noindent
\emph{Funding.}
This work was supported by Early Career Fellowship entitled `A Mathematical Study of Discrete Dislocation Dynamics' (ECF-2016-526), awarded by the Leverhulme Trust.

\appendix

\section{Vectors, forms and currents}
\label{appendix}
This appendix recalls various definitions from the theory of currents, as described in
Chapters 1 and 4 of both \cite{Federer} and \cite{Morgan}.

\subsection{Vectors and covectors}
\label{sec:vectors-covectors}
Recall that the usual \emph{exterior product} $\wedge$ is multilinear and alternating, i.e.
it satisfies
\begin{equation*}
  (u+\lambda v)\wedge w = u\wedge w +\lambda(v\wedge w)\quad\text{and}\quad u\wedge v = -v\wedge u.
\end{equation*}
Suppose that $e_1,\ldots,e_n$ form an orthonormal basis of $\R^n$.
The space of $m$--vectors $\Lambda_m\R^n$ is then the span
\begin{equation*}
  \Lambda_m\R^n:=\mathrm{span}\b\{ e_{i_1}\wedge\dots \wedge e_{i_m}\bsep i_j\in\{1,\ldots,n\}, i_1<\ldots < i_m \b\};
\end{equation*}
$m$--vectors should be thought of as describing oriented $m$--dimensional subspaces of $\R^n$.
The space of $m$--covectors, denoted $\Lambda^m\R^n$, is the space of linear functions on
$\Lambda_m\R^n$, i.e. $\Lambda^m\R^n:=[\Lambda_m\R^n]^*$. $\Lambda^m\R^n$ may be identified with
$\Lambda_m(\R^n)^*$, the span of $m$--fold wedge products of dual vectors $e_i^*$, defined to
satisfy $\<e_i^*,e_j\>=\Id_{ij}$.


We may define an inner product,
$(\cdot,\cdot)$ and corresponding norm, $|u| = (u,u)^{1/2}$ on
$\Lambda_m\R^n$, which makes $e_{i_1}\wedge\dots\wedge e_{i_m}$ with
$i_1<\dots<i_m$ an orthonormal basis for the space (see \S1.7 of \cite{Federer}). A similar inner product
can be constructed on $\Lambda^m\R^n$, which makes $e_{i_1}^*\wedge\dots\wedge e_{i_m}^*$ with
$i_1<\dots<i_m$ an orthonormal basis for this space.

While the above definitions are general, throughout this work we will exclusively consider
$n=3$, and $m=1$ or $m=2$, since these are the cases of
interest for the modelling of dislocations. In this case, we have the isometric
isomorphisms
\begin{equation*}
  \R^3\cong\Lambda_2\R^3\cong \Lambda_1\R^3\cong \Lambda^2\R^3\cong\Lambda^1\R^3.
\end{equation*}
In particular, it should be noted that the identification of $\Lambda_2\R^3$ with $\R^3$
corresponds to identifying $u\wedge v$ with the usual vector cross product on $\R^3$.

\subsection{Forms}
\label{sec:forms}
An \emph{$m$--form} is a function $\phi:\R^n\to\Lambda^m\R^n$. Using the basis of $\Lambda^m\R^n$ discussed in \S\ref{sec:vectors-covectors},
any such function may be expressed as
\begin{equation*}
  \phi(x) = \sum_{i_1<\dots<i_m} f_{i_1\ldots i_m}(x)\,e^*_{i_1}\wedge\ldots\wedge e^*_{i_m}.
\end{equation*}
The \emph{exterior derivative} $d\phi$ is the $(m+1)$--form defined via
\begin{equation*}
  d\phi(x) = \sum_{i_1<\dots<i_m} \bg(\sum_{i_{m+1}}f_{i_1\dots i_m,i_{m+1}} e^*_{i_{m+1}}\bg)\wedge(e^*_1\wedge\dots\wedge e^*_m).
\end{equation*}
For any open set $U\subseteq\R^n$, we define the vector space of smooth $m$--forms which are
compactly--supported in $U$ to be
\begin{equation*}
  \mathscr{D}^m(U):=\b\{\phi:U\to\Lambda^m\R^n\bsep \phi\text{ is }C^\infty\text{ with compact support in }U\b\}.
\end{equation*}

\subsection{Currents}
\label{sec:currents}
A current is a generalisation of the notion of a distribution \cite{Freidlander};
whereas distributions act on scalar--valued functions, currents instead
act on the spaces $\mathscr{D}^m(\R^n)$. In particular, an \emph{$m$--dimensional current} (or $m$--current) $T$ is a linear functional which acts on
$\mathscr{D}^m(\R^n)$, and we denote the action of a current on $\phi\in\mathscr{D}^m(\R^n)$ to be
$\<T,\phi\>\in\R$.
The \emph{boundary} of an $m$--dimensional current is the $(m-1)$--current $\partial T$, defined
to be
\begin{equation*}
  \<\partial T,\phi\> = \< T, d\phi\>\quad\text{for all }\phi\in\mathscr{D}^m(\R^n).
\end{equation*}
The support of a current is defined to be the closed set
\begin{equation*}
  \supp(T) := \R^n\setminus \bg(\bigcup \B\{ U \subset \R^n\Bsep U\text{ is open and }\<T,\phi\>=0\text{ for all }\phi\in\mathscr{D}^m(U)\B\}\bg).
\end{equation*}
We recall that $\Sigma\subset\R^n$ is \emph{$m$--rectifiable} if it may be expressed as a countable union
of images of bounded subsets of $\R^m$ under Lipschitz maps, and an $m$--current is
\emph{rectifiable} if there exists an $m$--rectifiable
set $\Sigma\subset \R^n$, a Borel measurable function $\tau:\Sigma\to\Lambda_m\R^n$ with
$|\tau|=1$ on $\Sigma$, and a Borel measurable $\mu:\Sigma\to\N$ such that
\begin{equation*}
  \<T,\phi\> = \int_\Sigma \<\tau,\phi\>\mu\,\dd\Haus^m,
\end{equation*}
where $\Haus^m$ denotes the $m$--dimensional Hausdorff measure on $\R^n$.
This representation demonstrates that rectifiable $m$--currents generalise the elementary vector
calculus notion of integrals over $m$--dimensional subsets of $\R^n$.

An $m$--dimensional rectifiable current is an \emph{integral current} if both $T$ and $\partial T$
are rectifiable currents. We will denote the space of integral $m$--currents as
$\mathscr{I}_m$, and we exclusively consider currents in these classes.

\subsection{Pushforward and pullback}
\label{sec:pushforward-pullback}
Given a Lipschitz map $f:\R^m\to\R^n$, we recall that $f$ has a Frechet derivative 
$Df(x)\in\BLO(\R^m;\R^n)$ for almost every $x\in\R^m$,
where $\BLO(\R^m;\R^n)$ is the space of bounded linear operators mapping $\R^m$ to $\R^n$.

Following the definitions in \S4.3A of \cite{Morgan} (which in turn follow the definitions given in \S4.1.6 and \S4.1.7 of \cite{Federer}), 
we define the pushforward of a simple $m$--vector $v_1\wedge\dots\wedge v_m\in\Lambda_m\R^n$
under a linear map $A\in\BLO(\R^m,\R^n)$ to be
\begin{equation*}
  \Lambda_m(A)[v_1\wedge \dots\wedge v_m] := A[v_1] \wedge\dots\wedge A[v_m].
\end{equation*}
Noting that $\Lambda_m\R^n$ is spanned by simple $m$--vectors, this definition can be extended linearly to
apply to general $m$--vectors $\xi\in\Lambda_m\R^n$.
In particular, we recall that the key property
of the pushforward is that $\Lambda_m\b(Df(x)\b)$ gives the transformation of tangent space of a manifold under
the map $f$.

The \emph{pullback} of an $m$--form $\phi\in\mathscr{D}^m(\R^n)$ by
a Lipschitz map $f$, denoted $f^\#\phi$, is defined to be
\begin{equation*}
  \b\<\xi,f^\#\phi(x)\b\> = \b\<\Lambda_m\b(Df(x)\b)[\xi],\phi\b(f(x)\b)\b\>.
\end{equation*}
Note that $\phi$ is evaluated on
the target space $f(\R^m)\subseteq \R^n$.

This definition allows us to define the \emph{pushforward} of a rectifiable current by duality as
\begin{equation*}
  \<f_\#T,\phi\>:=\<T,f^\#\phi\>.
\end{equation*}

\bibliography{DDD}
\bibliographystyle{plain}

\end{document}